\documentclass[11pt, oneside]{amsart}

\usepackage{graphicx} %
\usepackage{amsmath, amsthm, amssymb}
\usepackage{hyperref}
\usepackage[capitalize]{cleveref}
\usepackage{xspace}
\usepackage{tikz-cd}
\usepackage{tikz}
\usepackage[colorinlistoftodos, textsize=small]{todonotes}
\usepackage{enumitem}
\usepackage{xparse}
\usepackage{mathtools}
\usepackage{etoolbox}
\usetikzlibrary{calc}
\usepackage{pgf}
\usepackage{dsfont}
\usepackage{kantlipsum} %
\usepackage{subcaption} %

\usepackage{tabularray}
\UseTblrLibrary{diagbox}
\NewColumnType{C}{Q[c,$]}

\setlength{\textwidth}{\paperwidth}
\addtolength{\textwidth}{-2.2in}
\setlength{\textheight}{\paperheight}
\addtolength{\textheight}{-2.5in}
\calclayout

\newcommand{\longhookrightarrow}{\lhook\joinrel\longrightarrow}
\newcommand{\NN}{\ensuremath{\mathbb N}}
\newcommand{\RR}{\ensuremath{\mathbb R}}
\newcommand{\ZZ}{\ensuremath{\mathbb Z}}
\newcommand{\CC}{\ensuremath{\mathbb C}}
\newcommand{\C}{{\mathbb{C}}}
\renewcommand{\Re}{\operatorname{Re}}
\renewcommand{\Im}{\operatorname{Im}}
\newcommand{\x}{x}
\newcommand{\y}{y}
\newcommand{\s}{s}

\newcommand{\tripr}{ F_2^{(a)} \times F_2^{(b)} \times F_2^{(c)} }
\newcommand{\CZ}{\CC\setminus\{0\}}
\newcommand{\inv}[1]{\overline{#1}}
\newcommand{\abs}[1]{\left|#1\right|}
\newcommand{\SB}[1][3]{\mathrm{SB}(#1)}
\newcommand{\SBgens}{x_1,x_2,y_1,y_2,s}

\newcommand{\norm}[1]{\|#1\|_\infty}

\newcommand{\PB}{\mathrm{PB}}
\renewcommand{\epsilon}{\varepsilon}

\NewDocumentCommand{\Dehn}{m O{N}}{\delta_{#1}(#2)}
\NewDocumentCommand{\radius}{m O{N}}{\rho_{#1}(#2)}
\let\asympleq\preccurlyeq
\let\asympgeq\succcurlyeq
\newcommand{\disonwords}{F_4} %
\newcommand{\trivialwords}{W} %

\NewDocumentCommand{\freecopy}{m m}{F_{#1}^{(#2)}}
\NewDocumentCommand{\fpgens}{o O{\r}}{\mathcal A_{#2}\IfValueT{#1}{^{(#1)}}} %
\NewDocumentCommand{\fpgen}{m m}{a_{#1}^{(#2)}}
\NewDocumentCommand{\invfpgen}{m m}{\inv a_{#1}^{(#2)}}
\NewDocumentCommand{\invfpgens}{o O{\r}}{\inv{\mathcal A}_{#2} \IfValueT{#1}{^{(#1)}}} %

\NewDocumentCommand{\fprels}{}{\mathcal C}
\NewDocumentCommand{\kgens}{o O{\r}}{\mathcal X_{#2}\IfValueT{#1}{^{(#1)}}} %
\NewDocumentCommand{\kgen}{m m}{x_{#1}^{(#2)}}
\NewDocumentCommand{\invkgen}{m m}{\inv x_{#1}^{(#2)}}
\NewDocumentCommand{\invkgens}{o O{\r}}{\inv{\mathcal X}_#2\IfValueT{#1}{^{(#1)}}} %
\newcommand{\diaggens}{\Delta}
\NewDocumentCommand{\normal}{o m}{%
	w_{#2}%
	\IfValueT{#1}{%
		\ifthenelse{\equal{#1}{d}}{%
			^\diaggens%
		}{%
			^{(#1)}%
		}%
	}
}%

\NewDocumentCommand{\afgen}{m}{\xi_{#1}}

\NewDocumentCommand{\kalph}{m}{\kgens[#1]}

\let\diagalph\diaggens

\def\r{r}
\def\n{n}
\NewDocumentCommand{\K}{O{\n} O{\r} O{}}{{K^{#1}_{#2}(\ifblank{#3}{#2}{#3})}}
\NewDocumentCommand{\R}{ O{\r}O{\n}}{{{\mathcal R}_{#1}^{#2}}}
\NewDocumentCommand{\Rthick}{O{\bq} O{\n} O{\r}}{{{\mathbf{\mathcal R}}_{#3,#2}^{#1}}}
\NewDocumentCommand{\Aii}{m O{\r}}{{\mathcal{A}^{(#1)}_{#2}}}
\NewDocumentCommand{\A}{O{\n} O{\r}}{{\mathcal{A}_{#2}^{#1}}}
\NewDocumentCommand{\Fii}{m O{\r}}{{F^{(#1)}_{#2}}}
\NewDocumentCommand{\Xii}{m O{\r}}{{\mathcal{X}^{(#1)}_{#2}}}
\NewDocumentCommand{\mXii}{m O{\r}}{{\inv{\mathcal{X}}^{(#1)}_{#2}}}
\NewDocumentCommand{\X}{O{\n} O{\r}}{{\mathcal{X}_{#2}^{#1}}}
\NewDocumentCommand{\unor}{O{\r}}{{\mathds{1}_{#1}}}
\NewDocumentCommand{\xii}{m m}{{x^{(#1)}_{#2}}}

\NewDocumentCommand{\aii}{m m}{{a^{(#1)}_{#2}}}

\NewDocumentCommand{\prii}{m O{\r}}{\operatorname{pr}^{(#1)}_{#2}}
\NewDocumentCommand{\thickener}{m O{\bq}}{{\kappa_#2^#1}}
\NewDocumentCommand{\pres}{ O{r} O{\ell} O{m} O{n}}{\ensuremath{\Gamma_{#2,#3,#4}^{#1}}\xspace}
\NewDocumentCommand{\prestilde}{ O{r} O{\ell} O{m} O{n}}{\ensuremath{\widetilde\Gamma_{#2,#3,#4}^{#1}}\xspace}

\NewDocumentCommand{\dirprod}{O{\r} O{\n}}{%
	\ifthenelse{\equal{#2}{3}}{%
		\freecopy{#1}{a} \times \freecopy{#1}{b} \times \freecopy{#1}{c}%
	}{%
		\freecopy{#1}{1} \times \dots \times \freecopy{#1}{#2}%
	}%
}

\newcommand{\relations}{\mathcal{R}}
\let\length\abs
\newcommand{\ton}{^{(n)}}

\newcommand{\toi}{^{(i)}}

\NewDocumentCommand{\pa}{m O{\bq}}{{\mathbf x}_{#2}^{(#1)}}
\NewDocumentCommand{\mpa}{m O{\bq}}{\overline{\mathbf{x}}_{#2}^{(#1)}}
\NewDocumentCommand{\paro}{m m O{1} O{\bq}}{\pa{#1}[#4_{[#3:#2]}]}
\NewDocumentCommand{\mparo}{m m O{1} O{\bq}}{\mpa{#1}[#4_{[#3:#2]}]}

\NewDocumentCommand{\pax}{O{\bq}}{{\mathbf x}_{#1}}
\NewDocumentCommand{\mpax}{O{\bq}}{\overline{\mathbf{x}}_{#1}}
\NewDocumentCommand{\parox}{m O{1} O{\bq}}{\pax[#3_{[#2:#1]}]}
\NewDocumentCommand{\mparox}{m O{1} O{\bq}}{\mpax[#3_{[#2:#1]}]}

\NewDocumentCommand{\pay}{O{\bq}}{{\mathbf y}_{#1}}
\NewDocumentCommand{\mpay}{O{\bq}}{\overline{\mathbf{y}}_{#1}}
\NewDocumentCommand{\paroy}{m O{1} O{\bq}}{\pay[#3_{[#2:#1]}]}
\NewDocumentCommand{\mparoy}{m O{1} O{\bq}}{\mpay[#3_{[#2:#1]}]}

\DeclareMathOperator{\Conf}{Conf}
\newcommand{\push}{\operatorname{push}}
\DeclareMathOperator{\Area}{Area}

\NewDocumentCommand{\swaprels}{}

\newcommand{\I}{I}

\newcommand{\Cay}[2]{\operatorname{Cay}_{#2}(#1)}

\newcommand{\rar}{\rightarrow}
\newcommand{\ol}[1]{\overline{#1}}

\newcommand{\ppres}[2]{\langle #1\  |\  #2 \rangle}
\newcommand{\bN}{\mathbb{N}}

\newcommand{\cR}{\mathcal{R}}

\newcommand{\cX}{\mathcal{X}}

\newcommand{\bq}{\mathbf{q}}

\newcommand{\fibration}{\psi}
\newcommand{\free}[1]{F (#1)}
\newcommand{\eqfree}{=}
\newcommand{\eqg}[1]{=_{#1}}

\newcommand{\coloneq}{\coloneqq}
\newcommand{\eqcolon}{\eqqcolon}

\NewDocumentCommand{\simplecomms}{ O{\r} }{\mathcal R_{#1,1}}
\NewDocumentCommand{\comms}{ O{\r} }{\mathcal R_{#1,2}}
\NewDocumentCommand{\swaps}{ O{\r} }{\mathcal R_{#1,3}}
\NewDocumentCommand{\triplecomms}{ O{\r} }{\mathcal R_{#1,4}}
\NewDocumentCommand{\quadcomms}{ O{\r} }{\mathcal R_{#1,5}}

\let\originalleft\left
\let\originalright\right
\renewcommand{\left}{\mathopen{}\mathclose\bgroup\originalleft}
\renewcommand{\right}{\aftergroup\egroup\originalright}

\RequirePackage{mathtools}
\DeclarePairedDelimiter\set\{\}

\DeclarePairedDelimiter{\ceil}{\lceil}{\rceil}
\DeclarePairedDelimiterX\commsub[1]{[}{]}{#1,#1}

\NewDocumentCommand{\sequence}{O{1} m o}{#2_#1, \dots \IfValueT{#3}{, #2_#3}} %

\newcommand\generatedby[1]{\presentation{#1}{}}
\DeclarePairedDelimiterX\presentation[2]\langle\rangle{
	#1 \ifblank{#2}{}{\mid #2}
}
\NewDocumentCommand{\finitetype}{o}{\ensuremath{\IfValueTF{#1}{\mathcal F_#1}{\mathcal F}}\xspace}

\NewDocumentCommand{\shortexactsequence}{m O{} m O{} m O{}}{
	\begin{tikzcd}[ampersand replacement=\&, column sep=small]
		1 \arrow[r] \& #1 \arrow[r, "#2"] \& #3 \arrow[r, "#4"] \& #5 \arrow[r] \& 1#6
	\end{tikzcd}
}

\NewDocumentCommand{\range}{O{1} m}{%
	\ifthenelse{\equal{#1}{1} \AND \equal{#2}{3}}{%
		\set{1,2,3}%
	}{%
		\ifthenelse{\equal{#1}{1} \AND \equal{#2}{2}}{%
			\set{1,2}%
		}{%
			\set{#1, \dots, #2}%
		}%
	}
}

\newcommand{\isom}{\cong}

\newcommand{\suchthat}{\mid}
\newcommand{\Suchthat}{\ \middle|\ }

\newcommand\bigfun[5]{%
	\begin{tikzcd}[
			column sep=2em,
			row sep=1ex,
			ampersand replacement=\&
		]
		#1\colon \&[-2.5em]
		#2\vphantom{#3} \arrow[r] \&
		#3\vphantom{#2} \\
		\&
		#4\vphantom{#5}  \arrow[r,mapsto] \&
		#5\vphantom{#4}
	\end{tikzcd}%
}

\newcommand{\smallgroup}{\ensuremath{K_2^3(2)}\xspace}

\newtheorem{lemma}{Lemma}[section]
\newtheorem{proposition}[lemma]{Proposition}
\newtheorem{theorem}[lemma]{Theorem}
\newtheorem{corollary}[lemma]{Corollary}

\theoremstyle{remark}
\newtheorem{remark}[lemma]{Remark}

\theoremstyle{definition}
\newtheorem{definition}[lemma]{Definition}
\newtheorem{question}[lemma]{Question}
\newcounter{MainTheorems}

\newtheorem{maintheorem}[MainTheorems]{Theorem}
\crefname{maintheorem}{Theorem}{Theorems}
\crefname{equation}{}{}
\crefformat{enumi}{#2\textup{(#1)}#3}
\crefmultiformat{enumi}{#2\textup{(#1)}#3}{,~#2\textup{(#1)}#3}{,~#2\textup{(#1)}#3}{ and~#2\textup{(#1)}#3}

\makeatletter
\providecommand\@dotsep{5}
\def\listtodoname{List of Todos}
\def\listoftodos{\@starttoc{tdo}\listtodoname}
\makeatother

\newcommand{\todoerror}[2][]{}
\newcommand{\todowarn}[2][]{}
\newcommand{\todoremark}[2][]{}
\newcommand{\todohint}[2][]{}
\newcommand{\done}[2][]{}
\renewcommand{\todoerror}[2][]{}
\renewcommand{\todowarn}[2][]{}
\renewcommand{\todoremark}[2][]{}
\renewcommand{\todohint}[2][]{}
\renewcommand{\done}[2][]{}

\title[Dehn functions of SPFs]{Dehn functions of subgroups of products of free groups \\ Part II: Precise computations}
\author{Dario Ascari}
\address{\parbox{\linewidth}{Department of Mathematics, University of the Basque Country,\\
		Barrio Sarriena, Leioa, 48940, Spain\vspace{1.5pt}}}
\email{ascari.maths@gmail.com}
\author{Federica Bertolotti}
\address{Scuola Normale Superiore, Piazza dei Cavalieri 7, 56126 Pisa, Italy}
\email{federica.bertolotti@sns.it}
\author{Giovanni Italiano}
\address{\parbox{\linewidth}{Mathematical Institute, University of Oxford, \\Andrew Wiles Building,
		Woodstock Road, OX2 6GG Oxford, UK}\vspace{1.5pt}}
\email{italiano@maths.ox.ac.uk}
\author{Claudio Llosa Isenrich}
\address{Faculty of Mathematics, KIT, Englerstr. 2, 76131 Karlsruhe, Germany}
\email{claudio.llosa@kit.edu}
\author{Matteo Migliorini}
\address{Faculty of Mathematics, KIT, Englerstr. 2, 76131 Karlsruhe, Germany}
\email{matteo.migliorini@kit.edu}

\keywords{Dehn function, direct product of free groups, residually free groups}
\subjclass{20F65 (20F05, 20F67, 20F69, 57M07)}

\begin{document}

\begin{abstract}
    We prove that the Bridson--Dison group has quartic Dehn function, thereby providing the first precise computation of the Dehn function of a subgroup of a direct product of free groups with super-quadratic Dehn function. We also prove that coabelian subgroups of direct products of $n$ free groups of finiteness type $\mathcal{F}_{n-1}$ and of corank $r\leq n-2$ have quadratic Dehn functions.
\end{abstract}

\maketitle

\addtocontents{toc}{\protect\setcounter{tocdepth}{1}}

\section{Introduction}

\newcommand{\comment}[1]{}

  A fundamental geometric invariant of finitely presented groups is their Dehn function. Combinatorially, it quantifies the complexity of solving the word problem, while geometrically it provides an optimal isoperimetric inequality. Here we focus on precise computations of Dehn functions of subgroups of direct products of free groups (short: SPFs), continuing the study initiated in \cite{UniformBounds-25}, where we focused on uniform upper bounds. In particular, we give the first precise computation of a super-quadratic Dehn function of a SPF:

  \begin{maintheorem}\label{main:disons-group}
	  The kernel of the morphism $F_2\times F_2 \times F_2 \to \mathbb{Z}^2$ which is surjective on every factor has quartic Dehn function.
  \end{maintheorem}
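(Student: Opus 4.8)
Write $K=\ker\bigl(F_2^{(a)}\times F_2^{(b)}\times F_2^{(c)}\to\mathbb Z^2\bigr)$ for the Bridson--Dison group; it is finitely presented (of type $\mathcal F_2$ but not $\mathcal F_3$), so its Dehn function is well defined up to $\simeq$, and the plan is to prove the two matching estimates $\Dehn K\preccurlyeq N^4$ and $\Dehn K\succcurlyeq N^4$ separately. The upper bound is the softer half: $K$ is a coabelian subgroup of corank $2$ in a product of $3$ free groups, so $\Dehn K\preccurlyeq N^4$ follows from the uniform upper bounds of Part~I~\cite{UniformBounds-25} (and if a small sharpening is needed one re-runs that argument tracking constants for the concrete presentation below). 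Even the upper bound is not obvious: filling inside each free factor separately, exploiting their hyperbolicity, does \emph{not} see the quartic, and one must organise the filling around the two independent height homomorphisms $\phi_1,\phi_2\colon K\hookrightarrow F_2^3\to\mathbb Z$ coming from the coordinates of $\mathbb Z^2$.

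\emph{Lower bound, words.}
This is where essentially all the work lies. Fix a finite presentation of $K$ with generating set the ``diagonal'' elements $d_i=a_ib_i^{-1}$, $e_i=b_ic_i^{-1}$ ($i=1,2$) together with the intra-factor commutators $[a_1,a_2]$, $[b_1,b_2]$, $[c_1,c_2]$, and with relations those provided by Part~I (or verified by hand), which record how a commutator supported in one free factor is traded, using the $d_i,e_i$, for commutators supported in the neighbouring factors plus a ``transport'' of the base point in the $\mathbb Z^2$-direction. One then writes down a family of null-homotopic words $w_N$ of length $\asymp N$ --- a nested commutator in the above generators, for instance (a cyclic conjugate of) $\bigl[\,[d_1^N,d_2^N]\,,\,[c_1,c_2]^N\,\bigr]$ or a similar word --- which are trivial in $K$ because the two inner commutators have disjoint support in $F_2^3$. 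The intuition for $\Area(w_N)\succcurlyeq N^4$ is that any filling is forced to transport a unit commutator --- the image of $[c_1,c_2]$ --- across a two-dimensional $N\times N$ array of base points indexed by the partial $(\phi_1,\phi_2)$-heights reached along the way, and to do this order $N$ times; moving between two lattice-adjacent base points must be routed through a depth-$N$ word in a free factor and so costs $\asymp N$ relators, whence the product $N\cdot N\cdot N\cdot N=N^4$.

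\emph{Lower bound, the invariant, and the main obstacle.}
To make this rigorous one attaches to every van Kampen diagram $D$ with $\partial D = w_N$ a numerical invariant $\Phi(D)$: using $\phi_1,\phi_2$ one builds on $D$ a discrete $2$-form measuring how the $[c_1,c_2]$-portion of $D$ winds against the $(\phi_1,\phi_2)$-grid, and one proves that (i) $\Phi$ changes by $O(1)$ across each relator, so $\Phi(D)\leq C\cdot\Area(D)$, and (ii) the combinatorics of $w_N$ pin $\Phi$ down so that $\Phi(D)\succcurlyeq N^4$ for \emph{every} such $D$. Step~(ii) is the crux and the real obstacle: one must exclude \emph{all} fillings that might undercut $N^4$ --- every shortcut exploiting the hyperbolicity of the free factors, and every re-routing in the $\mathbb Z^2$-direction --- which forces $\Phi$ to be at once coarse enough to be additive over relators and fine enough to detect the genuinely two-dimensional transport; calibrating it so the exponent is exactly $4$, matching the upper bound, rather than $3$, is the delicate point. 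By contrast, checking $w_N=1$ in $K$ and bounding $|w_N|$ is routine once the presentation is fixed.
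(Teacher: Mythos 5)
There are genuine gaps in both halves of your proposal. For the upper bound, the claim that $\delta_K\preccurlyeq N^4$ ``follows from the uniform upper bounds of Part~I'' is false: running the push-down argument of Part~I with ambient group $F_2\times F_2\times F_2$ gives pushed relations of quartic area, and multiplying by the quadratic area--radius data of the ambient product yields only $\delta_K\preccurlyeq N^6$, which is exactly Dison's old bound. The quartic upper bound requires a new idea, namely to realise $K_2^3(2)$ as the kernel of a map from the Stallings--Bieri group $\mathrm{SB}(3)$ to $\mathbb Z$ and push down through \emph{that} extension: there the pushed relations have only quadratic area, and since $\mathrm{SB}(3)$ has quadratic Dehn function (hence an $(N^2,N)$ area--radius pair by Papasoglu), the push-down theorem gives $N^2\cdot N^2=N^4$. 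Your outline, which pushes directly against the two height maps to $\mathbb Z^2$, cannot reach $N^4$ without this intermediate group.

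For the lower bound, what you have written is a program rather than a proof: the invariant $\Phi$ on van Kampen diagrams is never constructed, and you yourself flag the decisive step --- showing $\Phi(D)\succcurlyeq N^4$ for \emph{every} filling --- as an unresolved obstacle. This is precisely where the previously known arguments stall at $N^3$. The paper's solution is a different kind of object altogether, a \emph{braid-invariant} defined on null-homotopic words rather than on diagrams: to each pair of Gaussian integers $(p_x,p_y)$ one associates a homomorphism from the group of null-homotopic words to $\pi_1(\mathrm{Conf}_2(\mathbb C\setminus\{0\}))\cong \mathrm{PB}_3$, by letting the letters $x_i$ and $y_i$ move two marked points around the puncture; the invariant $I(w)$ counts the base points $(p_x,p_y)$ for which the resulting pure braid is nontrivial. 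Because each $I_{p_x,p_y}$ is a homomorphism, $I$ is automatically subadditive and conjugation-invariant, so $\mathrm{Area}(w)\geq C\cdot I(w)$ with no diagram analysis needed; and for the words $w_n=[x_1^n,y_2^n][x_2^n,y_1^n]$ of length $O(n)$ one checks that roughly $n^4$ choices of $(p_x,p_y)$ (a four-dimensional lattice box) produce the Borromean-rings braid, which is nontrivial in $\mathrm{PB}_3$. The exponent $4$ thus comes from the dimension of the space of base points, not from a transport estimate inside a filling; without some such concrete, verifiably subadditive invariant your argument does not close.
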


  We emphasize that we obtain the lower bound in \cref{main:disons-group} by introducing a new homotopical invariant, based on braid groups. Furthermore, %
  we prove:

  \begin{maintheorem}\label{main:quadratic-case}
	  Let $n,r$ be integers with $n\geq r+2\geq 4$ and let $K$ be an SPF of type $\mathcal{F}_{n-1}$ in a direct product of $n$ which is virtually coabelian of corank $r$. Then $\delta_K(N)\asymp N^2$.

  \end{maintheorem}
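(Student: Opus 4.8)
The plan is to establish the matching bounds $\delta_{K}(N)\succcurlyeq N^{2}$ and $\delta_{K}(N)\preccurlyeq N^{2}$; the hypothesis $n\ge r+2$ enters only in the upper bound. We first record a reduction used throughout. Since Dehn functions are quasi-isometry invariants, are unchanged on passing to a finite-index subgroup, and a finite-index subgroup of a product of free groups virtually contains a product of free groups, we may assume $K=K_{0}:=\ker(\phi\colon G\to\mathbb Z^{r})$ with $G=F_{1}\times\cdots\times F_{n}$ a product of free groups and $\phi$ surjective. Put $\phi_{i}:=\phi|_{F_{i}}$. By the Bieri--Renz criterion, $K_{0}$ is of type $\mathcal F_{n-1}$ exactly when the $(r-1)$-sphere $S(G,K_{0})=\{[\psi\circ\phi]\colon 0\ne\psi\in\operatorname{Hom}(\mathbb Z^{r},\mathbb R)\}$ lies in $\Sigma^{n-1}(G)$, and by the computation of the higher BNSR-invariants of a direct product of free groups a character $(\chi_{1},\dots,\chi_{n})$ lies in $\Sigma^{n-1}$ if and only if every $\chi_{i}$ is nonzero; combining these, each $\phi_{i}(F_{i})$ has finite index in $\mathbb Z^{r}$. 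In particular $K_{0}$ is of type $\mathcal F_{r+1}$, i.e.\ its finiteness type exceeds its corank by at least one.

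For the lower bound, $K_{0}$ is finitely presented (type $\mathcal F_{n-1}$, $n-1\ge3$) and contains a copy of $\mathbb Z^{2}$: choosing $0\ne v\in\phi_{1}(F_{1})\cap\phi_{2}(F_{2})$ (possible as both are finite-index) together with $u_{1}\in\phi_{1}^{-1}(v)$, $u_{2}\in\phi_{2}^{-1}(-v)$, and likewise nontrivial $u_{3}\in F_{3}$, $u_{4}\in F_{4}$ with $\phi_{3}(u_{3})=-\phi_{4}(u_{4})\ne0$ (using $n\ge4$), the elements $u_{1}u_{2}$ and $u_{3}u_{4}$ of $K_{0}$ have infinite order, commute, and generate a $\mathbb Z^{2}$. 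Thus $K_{0}$ is not hyperbolic, and by the theorem of Gromov (with proofs by Ol'shanskii, Bowditch and Papasoglu) that a finitely presented group with subquadratic Dehn function is hyperbolic, $\delta_{K_{0}}(N)\succcurlyeq N^{2}$.

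For the upper bound, let $w$ be a word of length $N$ representing $1$ in $K_{0}$ and rewrite it in the generators of $G$ to obtain $\hat w$ of length $O(N)$ with $\hat w=1$ in $G$. Since $G$ is a direct product of free groups it has quadratic Dehn function, and one can take a van Kampen diagram $D$ for $\hat w$ over $G$ of area $O(N^{2})$ of ``wiring type'': its $2$-cells are commutator relators $[F_{i},F_{j}]$, $i\ne j$, and at each vertex only two of the factors are active. Because commutator relators lie in $\ker\phi$, each vertex $p$ of the simply connected disk $D$ carries a well-defined level $\nu(p):=\phi(g_{p})\in\mathbb Z^{r}$, where $g_{p}$ is read off any path in $D$ from the basepoint to $p$; $\nu$ changes by $O(1)$ across each edge and is $O(1)$ on $\partial D$, though it may grow linearly in the interior. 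The aim is to convert $D$ into a van Kampen diagram over a finite presentation of $K_{0}$ by attaching to each vertex $p$ a correction $c_{p}\in G$ with $\phi(c_{p})=-\nu(p)$ and $|c_{p}|_{G}=O(\lVert\nu(p)\rVert)$, replacing $g_{p}$ by $g_{p}c_{p}\in K_{0}$; the push-down of an edge $p\to q$ then becomes a bounded word in $K_{0}$ provided the $c_{p}$ vary slowly and, at each vertex, are supported away from the two active factors, so that traversing an edge never conjugates a generator by a long element. Granting such a choice of corrections, $D$ becomes a diagram over $K_{0}$ with $O(N^{2})$ cells, whence $\delta_{K_{0}}(N)\preccurlyeq N^{2}$.

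The main obstacle is the existence of a slowly varying, active-factor-avoiding family $\{c_{p}\}$ of corrections along $D$, and this is exactly where $n\ge r+2$ — equivalently, the type of $K_{0}$ exceeding its corank by at least one — is used: one stores the level $-\nu(p)$ as exponents of fixed lifts of a basis of $\mathbb Z^{r}$ spread among the $n-2\ge r$ inactive factors, and as the pair of active factors rotates while moving across $D$ one must transfer these exponents out of any factor that is about to become active; having at least two factors more than the corank provides the slack to schedule all of these transfers with total cost $O(N^{2})$. This scheduling, which we carry out using the combinatorial machinery of \cite{UniformBounds-25}, is the technical heart of the proof. The analysis is sharp: when $n=r+1$ (so $K_{0}$ is of type $\mathcal F_{r}$ but not $\mathcal F_{r+1}$) the slack disappears, the push-down provably fails, and \cref{main:disons-group} shows that the Dehn function can in fact jump to $N^{4}$.
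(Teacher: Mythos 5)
Your lower bound is the standard one and matches the paper's: $K$ contains $\mathbb{Z}^2$, hence is not hyperbolic, hence $\delta_K(N)\succcurlyeq N^2$. Your reduction to a coabelian kernel of a homomorphism that is (virtually) surjective on each factor is also in the spirit of the paper's reduction to the groups $K^n_r(r)$. The problem is the upper bound, which is where all the content of \cref{main:quadratic-case} lies.

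What you propose for the upper bound is the geometric pushing-fillings strategy of Carter--Forester and Kropholler--Llosa Isenrich: take a quadratic-area wiring diagram over the ambient product $G$, assign to each vertex $p$ a level $\nu(p)\in\mathbb{Z}^r$, and correct each vertex by an element $c_p$ with $\phi(c_p)=-\nu(p)$, supported away from the two active factors and varying slowly across edges. Everything hinges on the sentence ``Granting such a choice of corrections,'' and that is precisely the step that is not known to work in the full range $n\ge r+2$. The published implementation of this scheme yields the quadratic bound only under the much stronger hypothesis $n\geq 4\cdot\lceil r/2\rceil$ \cite[Theorem 1.5]{KrophollerLlosa}; closing the gap between $4\lceil r/2\rceil$ and $r+2$ is exactly what \cref{main:quadratic-case} is about. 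Your appeal to ``the combinatorial machinery of \cite{UniformBounds-25}'' does not supply the missing scheduling argument: that paper provides presentations and a push-down technique over $\mathbb{Z}$-quotients whose output is $\alpha_G(N)\cdot f(\rho_G(N))$, which is at least quartic in this setting, never quadratic. So the existence of a slowly varying, active-factor-avoiding family $\{c_p\}$ of total cost $O(N^2)$ is an unproven assertion sitting at the core of your proof, not a technical detail to be outsourced.

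The paper deliberately avoids this route. It works with the explicit presentation $\langle\mathcal X\mid\mathcal R\rangle$ of $K^n_r(r)$ and defines a normal form $w_g=w_g^\Delta\, w_g^{(1)}\cdots w_g^{(n-1)}$, where each $w_g^{(i)}$ is supported on the $i$-th standard alphabet $\mathcal X^{(i)}$ and $w_g^\Delta$ is a commutator word in the diagonal alphabet $\Delta=(x_1^{(1)},\dots,x_r^{(r)})$. A chain of word-manipulation lemmas culminating in \cref{prop:multiply-normal-forms} shows that the product of two normal forms can be rewritten as the normal form of the product at quadratic cost; the hypothesis $n\ge r+2$ enters because one needs the alphabet $\mathcal X^{(r+1)}$, disjoint from $\Delta$, as a spare factor through which letters can be shuttled during these manipulations. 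The quadratic bound then follows from the usual triangle decomposition. So you have correctly located where $n\ge r+2$ should matter, but your argument has a genuine gap at its center, and the paper's actual proof is algebraic (normal forms and relation counting) rather than diagrammatic. Finally, your closing claim that for $n=r+1$ the push-down ``provably fails'' and the Dehn function ``can jump to $N^4$'' overstates what is known: the quartic behaviour is established only for $(n,r)=(3,2)$, and the general case $n\le r+1$ is posed as an open question in the paper.
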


\cref{tbl:dehn-functions} gives a a summary of the results of this work and \cite{UniformBounds-25}. We will now explain in detail the context of our results, and the ideas behind their proofs.

\begin{table}
	\begin{tblr}{
			colspec={CCCCCCCC},
			hline{2-8} = {solid},
			vlines={2-7}{solid},
			cell{3}{2-6} = {green},
			cell{4}{4-6} = {green},
			cell{4}{4-6} = {green},
			cell{5}{5-6} = {green},
			cell{6}{6-6} = {green},
			cell{1}{1} = {r=1,c=6}{c,mode=text}
		}
		Best upper bounds                                           \\
		\diagbox{r}{n} & 3      & 4      & 5      & 6      & \dots  \\
		2              & N^4    & N^2    & N^2    & N^2    & \dots  \\
		3              & N^8    & N^5    & N^2    & N^2    & \dots  \\
		4              & N^9    & N^5    & N^4    & N^2    & \dots  \\
		5              & N^9    & N^5    & N^4    & N^4    & \dots  \\
		\vdots         & \vdots & \vdots & \vdots & \vdots & \ddots
	\end{tblr}
	\hspace{0.3cm}
	\begin{tblr}{
			colspec={CCCCCCCC},
			hline{2-8} = {solid},
			vlines={2-7}{solid},
			cell{3}{2-6} = {green},
			cell{4}{4-6} = {green},
			cell{4}{4-6} = {green},
			cell{5}{5-6} = {green},
			cell{6}{6-6} = {green},
			cell{1}{1} = {r=1,c=6}{c,mode=text}
		}
		Best lower bounds                                           \\
		\diagbox{r}{n} & 3      & 4      & 5      & 6      & \dots  \\
		2              & N^4    & N^2    & N^2    & N^2    & \dots  \\
		3              & N^4    & N^2    & N^2    & N^2    & \dots  \\
		4              & N^4    & N^2    & N^2    & N^2    & \dots  \\
		5              & N^4    & N^2    & N^2    & N^2    & \dots  \\
		\vdots         & \vdots & \vdots & \vdots & \vdots & \ddots
	\end{tblr}
	\caption{This table illustrates the best known upper and lower bounds for $ \K $, obtained by combining the results of \cref{main:quadratic-case,main:disons-group} and of our previous paper (see \cite[Table 1]{UniformBounds-25}). In the table we have highlighted the cases where the two bounds coincide and give a precise computation of the Dehn function.}
	\label{tbl:dehn-functions}
\end{table}

\subsection{The geometry of subgroups of direct products of free groups}

Subgroups of direct products of free groups form a large and natural class of groups, which is the source of many important pathological examples in group theory \cite{Mih-68,stallings,bieri}. In particular, they provided the examples of groups of type $\mathcal{F}_{n-1}$ and not $\mathcal{F}_n$ for every $n\in \mathbb{N}$ \cite{stallings,bieri}. Here we call a group of type $\mathcal{F}_n$, if it has a classifying space with finitely many cells of dimension $\leq n$. On the other hand, finitely presented SPFs were classified in terms of their finiteness properties in \cite{BHMS-09,BHMS-13}. We refer to \cite{UniformBounds-25} for further background and details on SPFs.

	The above results make it natural to study the finer geometric invariants of SPFs, and Dehn functions are one such invariant. In this work we continue our study of Dehn functions of SPFs, which we initiated in \cite{UniformBounds-25}, and which generalises and strengthens previous works (see e.g.~\cite{Dison-08-II,LlosaTessera,KrophollerLlosa}). While in \cite{UniformBounds-25} we focus on obtaining general uniform upper bounds on natural classes of SPFs, here we focus on precise computations of their Dehn functions.

	Precise computations of Dehn functions are a challenging task, since they require attaining matching upper and lower bounds. The challenge in proving optimal upper bounds is that they usually require us to explicitly find the most efficient way of simplifying words using the relations of the group under consideration. While this can be difficult, proving sharp lower bounds usually turns out to be even harder, since it requires finding invariants that can be used to prove the optimality of the fillings of a suitable family of words. Our situation seems to be no exception in this respect.

\subsection{SPFs with quadratic Dehn function}

The only situation in which we can avoid finding lower bounds is when we can prove that the Dehn function is bounded above by $N^2$. This is because every SPF that is not free contains $\mathbb{Z}^2$ and is thus not hyperbolic, meaning that its Dehn function is $\succcurlyeq N^2$. Generalising the computation of the Dehn function of Stallings--Bieri groups by Carter and Forester \cite{carter2017stallings}, Kropholler and the fourth author proved for large classes of SPFs that they have quadratic Dehn function.

		More precisely, given an SPF $K$ of type $\mathcal{F}_{n-1}$ in a direct product of $n\geq 4$ free groups, they prove that $\delta_K(N)\asymp N^2$ whenever $n \geq 4\cdot\left\lceil \frac{r}{2}\right\rceil$, where $r$ is the corank \cite[Theorem 1.5]{KrophollerLlosa}. \cref{main:quadratic-case} generalises their result to all cases where $n\geq r+2$.

		To prove \cref{main:quadratic-case}, we define a normal form for elements of $K$. It consists of several pieces: when interpreted inside the ambient product of free groups, each of these pieces is supported on only $2$ of the factors, except for the last ``diagonal'' piece, which is supported on $r+1$ factors. We then perform algebraic manipulations, using the presentations found in \cite{UniformBounds-25}, to bring the product of two normal forms again into normal form. In order to perform these manipulations efficiently, we need one extra variable to play with, explaining the condition $n\ge r+2$.

Interestingly, if one thinks about Dehn functions of coabelian SPFs in terms of the geometric approach pursued by Carter and Forester in \cite{carter2017stallings}, then it amounts to pushing a filling for a loop in a direct product of $n$ trees into the level set under a $\psi$-equivariant height map $T_{m_1}\times \cdots \times T_{m_n}\to \mathbb{R}^r$. These level sets are simply connected
precisely when $n\geq r+2$. This raises the question if \cref{main:quadratic-case} is optimal.
\begin{question}
	If $K$ is an SPF which is virtually coabelian of corank $r$ in a direct product of $n \leq r + 1$ non-abelian free groups, is then necessarily $\delta_K(N)\succ N^2$?
\end{question}

\subsection{A SPF with quartic Dehn function}

This brings us back to the more challenging problem of determining the precise Dehn functions of SPFs in cases when it is not quadratic. A natural first candidate for solving this problem is the Bridson--Dison group $K_2^3(2)$, as it is the first and simplest example of an SPF for which we know that its Dehn function is not quadratic. Dison \cite{Dison-08,Dison-09} and Bridson \cite{BridsonPersonal} proved that $N^3\preccurlyeq \delta_{K_2^3(2)}(N)\preccurlyeq N^6$. This already shows that \cref{main:quadratic-case} cannot hold in general if we just require $n\geq r+1$.
\Cref{main:disons-group} shows that neither the upper nor the lower bound are optimal and that in fact $K_2^3(2)$ has quartic Dehn function.

Both the upper and lower bounds in \cref{main:disons-group} involve new ideas compared to the previous literature on SPFs,
which may be of independent interest. For the upper bound, instead of pushing with respect to the surjective homomorphism $F_2\times F_2\times F_2\to \mathbb{Z}^2$ with kernel $K_2^3(2)$, our strategy is to  interpret $K_2^3(2)$ as a kernel of a morphism from the Stallings--Bieri group $\SB[3]$ to  $\mathbb{Z}$ and apply the pushing argument to this group, using that it admits $(N^2,N)$ as an area-radius pair.

The main innovation in our proof of \cref{main:disons-group} is however the lower bound, which relies on an obstruction that to us seems to be completely novel. Our strategy here is to introduce a new invariant on the set of null-homotopic words $\trivialwords$ in a suitable generating set of $K_2^3(2)$, which we call the \emph{braid-invariant}. The braid-invariant induces lower area bounds on null-homotopic words and we apply it to show that there is a family of such words with quartic area growth in terms of their word length.
To define the braid-invariant, we consider the pure braid group $ \PB_3 $ on three strands, interpreted as the fundamental group of the ordered configuration space of three points in the plane; then we construct a homomorphism from $\trivialwords$ to $\PB_3$ for every suitable choice of base points on the plane. Roughly speaking this map is defined by a certain kind of braiding of words described by a well-chosen generating set $\left\{x_1,x_2,y_1,y_2\right\}$ of $K_2^3(2)$; here the set $\left\{x_1,x_2\right\}$ (resp.~$\left\{y_1,y_2\right\}$) diagonally generates the kernel of the restriction of $\psi$ to the first and third factor (resp.~second and third factor). The braiding happens between the words in the first pair of generators and the words in the second pair of generators. The braid-invariant counts the number of choices of base points that produce a nontrivial braid, and we prove that its growth can be quartic for a suitably chosen family of words.

We conclude by observing that \cref{main:disons-group} gives a negative answer to
\cite[Question 3]{LlosaTessera}, which asks if the Dehn function of every finitely presented SPF in a direct product of $n$ free groups is bounded above by $N^n$.

\subsection*{Structure}
The paper is structured as follows.%
\begin{itemize}
	\item In \cref{sec:notation-p} we fix the definitions and the notation. We recall the presentations for the kernels, as well as the details of the push-down strategy, from our previous paper \cite{UniformBounds-25}.

	\item In \cref{sec:quadratic} we prove \cref{main:quadratic-case}, by defining a normal form and performing some word manipulations. This does not rely on the push-down argument.
	\item In \cref{sec:disons-group} we prove \cref{main:disons-group}. For the upper bound, we use the push-down technique using the Stallings-Bieri group as ambient group. For the lower bound, we introduce and use the braid-invariant.

\end{itemize}

\subsection*{Guide for the reader}

\Cref{sec:notation-p} is required for understanding the rest of the paper.
\cref{sec:quadratic,sec:disons-group} are fairly independent to one another, so the reader interested in only one of the main theorems can jump directly from \cref{sec:notation-p} to the relevant section.

\subsection*{Acknowledgements} The first author was supported by the Basque Government grant IT1483-22.
The second author would like to thank Karlsruhe Institute of Technology for the hospitality, and was partially supported by the INdAM GNSAGA Project, CUP E55F22000270001. The third author gratefully acknowledges support from the Royal Society through the Newton International Fellowship (award number: NIF\textbackslash R1\textbackslash 231857). The fourth author would like to thank Robert Kropholler and Romain Tessera for many discussions about the topics of this work. The fourth and the fifth author gratefully acknowledge funding by the DFG 281869850 (RTG 2229). %
For the purpose of Open Access, the authors have applied a CC-BY public copyright licence to any Author Accepted Manuscript (AAM) version arising from this submission.

\addtocontents{toc}{\protect\setcounter{tocdepth}{2}}

\def\r{}

\section{Preliminaries and notation}\label{sec:notation-p}
We start by recalling some preliminary notions and introducing notation from \cite[Section 2]{UniformBounds-25}. In doing so, we stay close to the contents and notation introduced there.

For a group $G$, we denote by $\inv g = g^{-1}$ the inverse of an element $g\in G$, and we denote by $ [g,h] = gh \inv g \inv h$ the commutator of two elements $g,h\in G$.

\subsection{Free groups and homomorphisms}\label{sec:substitutions}
We denote by $F(S)$ the free group with basis a set $S$. Elements $w\in F(S)$ can be written uniquely as reduced words in the alphabet $S\sqcup S^{-1}$; the group operation corresponds to concatenation of words, followed by cancellation of adjacent pairs of inverse letters. We define the \emph{length} $\abs{w}_S$ as the number of letters in the word representing $w$.

Let $S=(s_1,\dots,s_n)$ be a finite ordered tuple, and let $w=w(s_1,\dots ,s_n)\in F(S)$ be an element. Given another set $T$, and given $u_1,\dots,u_n\in F(T)$, we denote by $w(u_1,\dots,u_n)\in F(T)$ the word obtained from $w$ by substituting each occurrence of $s_i$ with $u_i$, for $i \in \range n$. More precisely, we consider the unique homomorphism $\eta \colon F(S)\rar F(T)$ satisfying $\eta(s_i)=u_i$ for $i \in \range n$, and we define $w(u_1,\dots,u_n)\coloneq\eta(w)$.

\subsection{Area in free groups}\label{sec:definition-area}
Let $F$ be a free group and let $\cR\subseteq F$ be a subset. For $w\in F$ define the \textit{area} of $w$ as
\[
	\Area_{\mathcal R}(w) = \inf \set*{M\in\bN \Suchthat w = \prod_{i=1}^M \inv u_i R_i u_i,\,u_i \in F,\,R_i \in \mathcal R} ,
\]
and set $\Area_\cR(w)=+\infty$ if $w$ does not belong to the normal subgroup generated by $\cR$.

\subsection{The free group over a subset of a group}
Given a group $G$ and an arbitrary subset $S\subseteq G$, there is a unique homomorphism 
\[F(S)\rar G\]
sending each element of $S$ to the corresponding element of $G$. This map is surjective if and only if $S$ is a generating set for $G$. For two elements $u,w\in F(S)$, when we write $u=w$ we mean this as an equality in the free group $F(S)$, whereas when we write $u \eqg{G} w$ we mean that $u$ and $w$ project to the same element of $G$.

For $ g \in G $, we denote by $ \abs{g}_S = \inf\{\abs{w}_S : w\in F(S), \ w$ projects to $g\}$. We set $\abs{g}_S=+\infty$ if $g$ does not belong to the subgroup generated by $S$. If $S$ is a finite generating set for the group $G$, then $\abs{g}_S$ is the distance from the origin to $g$ in the Cayley graph $\Cay{G}{S}$.

\subsection{Dehn function of a group}
Let $G = \presentation S \relations$ be a finitely presented group, where $S\subseteq G$ is a finite set of generators and $\cR\subseteq F(S)$ is a finite set. Define the \emph{Dehn function} $ \delta_{G,S,\cR} \colon \NN \to \NN$ as follows:
\[
	\delta_{G,S,\cR} (N) = \max\set{\Area_{\relations}(w) \suchthat w \in F(S), \abs{w}_S \leq N, w=_G1}.
\]

For two functions $ f,g \colon \NN \to \NN$, we denote $ f \asympleq g $ if $ f(N) \leq C g(CN+C) + CN + C $ for some constant $C>0$. We denote $ f \asymp g $ if $ f \asympleq g $ and $ f \asympgeq g $, and we observe that $\asymp$ is an equivalence relation.

Given two different finite presentations $G = \presentation S \cR \cong \presentation {S'} {\cR'}$ for the same group $G$, we have that $\delta_{G,S,\cR} \asymp \delta_{G,S',\cR'}$. We define the \textit{Dehn function} $\delta_G$ as the equivalence class of the functions $\delta_{G,S,\cR}$, associated to all the possible presentations for $G$, up to the equivalence relation $\asymp$. With an abuse of notation, we will sometimes denote by $\delta_G:\bN\rar\bN$ a function in the equivalence class.

\subsection{Area-radius pairs for a group}
Let $G=\ppres{S}{\cR}$ be a finitely presented group. An \emph{area-radius pair} for the presentation is a pair $(\alpha,\rho)$ of functions $\alpha,\rho \colon \bN\rar\bN$ satisfying the following property: for every $w\in F(S)$ such that $ w \eqg{G} 1 $, it is possible to write $w=\prod_{i=1}^k \ol{u}_iR_iu_i$ for some $u_1, \dots ,u_k\in F(S)$ and $R_1, \dots ,R_k\in\cR$, such that $k\le \alpha(\abs{w})$ and $\abs{u_1}, \dots ,\abs{u_k}\le \rho(\abs{w})$. While the Dehn function only controls the number of relations used to fill a word $w$, an area-radius pair bounds at the same time also the length of the conjugating elements $u_i$ in the identity $w=\prod_{i=1}^k \ol{u}_iR_iu_i$.

We observe that, if $G=\ppres{S}{\cR}\cong\ppres{S'}{\cR'}$ are two different presentations for the same group, and if $(\alpha,\rho)$ is an area-radius pair for $G$ with respect to the first presentation, then there is an area-radius pair $(\alpha',\rho')$ for $G$ with respect to the second presentation satisfying $\alpha'\asymp\alpha$ and $\rho'\asymp\rho$. We also note that, if $(\alpha,\rho)$ is an area-radius pair for $G$, then we have $\alpha\asympgeq\delta_G$.

\begin{proposition}[Papasoglu \cite{papasoglu1996asymptotic}]\label{prop:linear-radius}
	Let $G$ be a finitely presented group and suppose that $\delta_G(N)\asymp N^2$. Then there is an area-radius pair $(\alpha_G,\rho_G)$ for $G$ with $\alpha_G(N)\asymp N^2$ and $\rho_G(N)\asymp N$.
\end{proposition}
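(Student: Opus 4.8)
The plan is to work with van Kampen diagrams and to reduce the statement to a \emph{linear} bound on how deep a minimal-area diagram for a null-homotopic word must be. Fix a finite presentation $G=\presentation{S}{\cR}$; by hypothesis there is $C>0$ with $\delta_{G,S,\cR}(N)\le CN^2$ for all $N\ge 1$. Given $w\in F(S)$ with $w=_G1$ and $\abs{w}_S=N$, let $D$ be a van Kampen diagram for $w$ of \emph{minimal} area, so that $\Area(D)\le CN^2$ and $D$ is in particular reduced. The key claim is that every vertex of $D$ lies within distance $O(N)$ of $\partial D$ in $D^{(1)}$. Granting this, I would finish as follows: fix a basepoint $*$ on $\partial D$ and a breadth-first spanning tree $T$ of $D^{(1)}$ rooted at $*$; for each $2$-cell $c_i$, whose boundary spells a relator $R_i^{\pm1}$, let $v_i$ be a vertex of $\partial c_i$ and $u_i$ the word read along the $T$-geodesic from $*$ to $v_i$. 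Then $\abs{u_i}_S=d_{D^{(1)}}(*,v_i)\le d_{D^{(1)}}(v_i,\partial D)+\abs{w}_S=O(N)$, and the standard correspondence between diagrams and products of conjugates gives $w=\prod_{i=1}^{k}\inv u_i R_i^{\pm1}u_i$ in $F(S)$ with $k=\Area(D)\le CN^2$. Hence $(\alpha_G,\rho_G)$ with $\alpha_G(N)=CN^2$ and $\rho_G(N)$ taken to be the linear function $O(N)$ is an area-radius pair for $G$; since $\alpha_G\asympgeq\delta_G\asymp N^2$ we get $\alpha_G\asymp N^2$, and $\rho_G\asymp N$.

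The core of the proof is the depth bound, which I would obtain from a discrete differential inequality on the ``onion layers'' of $D$. For $t\ge0$ let $R_t\subseteq D$ be the subdiagram spanned by the closed $2$-cells at distance $\ge t$ from $\partial D$; put $A(t)=\Area(R_t)$ and let $\ell(t)$ be the total length of the boundary cycles of $R_t$, so that $A(0)\le CN^2$, $\ell(0)=N$, and $A$ is non-increasing with $A(t)=0$ for $t$ large. Two inequalities do the work. First, because $D$ has minimal area, $R_t$ is a minimal-area filling of its own boundary, so, using superadditivity of $N\mapsto N^2$ to handle several boundary cycles, $A(t)\le\delta_G(\ell(t))\le C\,\ell(t)^2$. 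Second, the annular region $R_t\setminus R_{t+1}$ is a single layer of $2$-cells wrapped around the boundary cycle(s) of $R_{t+1}$, and since each of its cells meets at most $K$ edges of that boundary ($K$ = maximal relator length), it contains at least $\ell(t+1)/K$ cells; that is, $\ell(t+1)\le K\bigl(A(t)-A(t+1)\bigr)$. Combining the two gives $A(t+1)\le CK^2\bigl(A(t)-A(t+1)\bigr)^2$, equivalently $A(t)-A(t+1)\ge\varepsilon\sqrt{A(t+1)}$ with $\varepsilon=1/(K\sqrt C)$. A routine comparison (writing $b_t=\sqrt{A(t)}$: apart from the boundedly many steps where $b_t$ more than halves, $b_t$ drops by at least a fixed constant) then shows $A(t)=0$ once $t\ge C'\sqrt{A(0)}=O(N)$. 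Thus $R_{O(N)}=\emptyset$: every $2$-cell, hence every vertex, of $D$ is within $O(N)$ of $\partial D$, as claimed.

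I expect the main difficulty to be purely technical: pinning down a workable notion of the distance of a cell from $\partial D$; arranging that $D$ is reduced with no lower-dimensional artefacts, so that the layers $R_t$ are honest subdiagrams whose boundary cycles spell words of length $\ell(t)$ in $F(S)$; handling the case in which $R_t$ is disconnected or has several boundary cycles, where one also needs that a minimal filling of such a configuration has area at most the sum of the minimal areas of the individual cycles; and verifying the cell-count $\ell(t+1)\le K\bigl(A(t)-A(t+1)\bigr)$ in that generality. The remaining ingredients — the reduction to the depth bound, the passage from a diagram to a bounded-radius product of conjugates via a breadth-first tree, and the elementary discrete comparison — are routine. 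This is, in essence, Papasoglu's argument, reorganised around the onion-layer estimate.
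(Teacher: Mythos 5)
The paper does not prove this proposition at all: its ``proof'' is a one-line citation to page 799 of Papasoglu's article, so there is nothing internal to compare your argument against. What you have written is, as you yourself note, essentially a reconstruction of Papasoglu's onion-layer argument, and the overall logic is sound: the reduction of the radius bound to the depth bound via a spanning tree and the van Kampen lemma is standard, the two inequalities $A(t)\le C\,\ell(t)^2$ and $\ell(t+1)\le K\bigl(A(t)-A(t+1)\bigr)$ are the right ones, and the discrete comparison (splitting the steps into those where $\sqrt{A(t)}$ halves, of which there are $O(\log N)$ since $A(t)\ge 1$ whenever nonzero, and those where it drops by a definite constant) does give $A(t)=0$ for $t=O(N)$. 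The points you flag as technical are genuinely where the care is needed, and I would add two small ones to your list: (i) minimality of $D$ only gives that $R_t$ is a minimal filling of its boundary cycles if each component of $R_t$ is a bona fide subdiagram that can be excised and replaced, which requires checking that $R_t$ has no ``holes'' (this does follow from the fact that the distance function changes by at most one across adjacent cells, but it should be said); and (ii) the depth bound you prove is for $2$-cells, measured in a cell-adjacency metric, whereas the radius bound needs the graph distance from \emph{vertices} to $\partial D$, so one loses a factor of the maximal relator length $K$ and must separately dispose of vertices lying on $1$-dimensional filaments of $D$ (which are already on $\partial D$). Neither affects the conclusion. Your write-up would serve as a self-contained proof where the paper offers only a reference.
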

\begin{proof}
	This is proved in \cite{papasoglu1996asymptotic} on page 799.
\end{proof}

\subsection{Push-down map}\label{sec:push-down}
\renewcommand{\fibration}{h}
We now recall the statement of the push-down technique as described in \cite{UniformBounds-25} for the short exact sequence
\[
	\shortexactsequence K[\iota]G[\fibration]{\mathbb Z}[,]
\]
where $G = \presentation\fpgens\fprels$ is finitely presented, and $K = \generatedby\cX$ is finitely generated. 

Choose a lift $ \tilde\iota \colon  \free\cX \to \free\fpgens $ making the following diagram commute:
\[
	\begin{tikzcd}
		& \free\cX \arrow[d] \arrow[r, "\tilde\iota"] & \free\fpgens \arrow[d]\arrow[dr, "\widetilde \fibration"] \\
		1 \arrow[r] & K \arrow[r, "\iota"] & G \arrow[r, "\fibration"] & \mathbb Z \arrow[r] & 1,
	\end{tikzcd}
\]
where $ \widetilde \fibration \colon \free \fpgens \to \mathbb Z$ is defined by composition.

\begin{definition}\label{def:push-down}
	A \emph{push-down map} is a map (not necessarily a group homomorphism)
	\[
		\bigfun{\push}{\mathbb Z \times \free\fpgens}{\free\cX}{(q, w)}{\push_q(w)}
	\]
	satisfying the following conditions:
	\begin{enumerate}
		\item For every $q\in \mathbb Z$ and $ w, w' \in \free\fpgens $ we have
		      \[
			      \push_q(w \cdot w') = \push_q(w) \cdot \push_{q + \widetilde\fibration(w)}(w').
		      \]
		\item For every $x\in\cX$ we have $\push_{0}(\tilde\iota(x))=_Kx$.
	\end{enumerate}
\end{definition}

\begin{lemma}\label{lem:push-down-existence}
	For every $q\in \mathbb Z$, choose $u_q\in F(\fpgens)$ with $\widetilde\fibration(u_q)=q$; set $u_{0}=1$. For every $q\in \mathbb Z$ and $a\in\fpgens$, choose an element $z_{q,a}\in F(\cX)$ such that $\tilde\iota(z_{q,a})=_G u_q\cdot a\cdot \ol{u}_{q+\widetilde\fibration(a)}$. Then there is a unique push-down map $\push\colon \mathbb Z\times F(\fpgens) \rar F(\cX)$ satisfying $\push_q(a)=z_{q,a}$.
\end{lemma}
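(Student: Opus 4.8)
The plan is to construct $\push$ explicitly: first fix its values on individual letters (these are essentially forced), then define it on arbitrary words by a height-indexed product, check that this definition is invariant under free reduction so that it descends to $F(\fpgens)$, verify the two axioms of \cref{def:push-down}, and finally observe that uniqueness is a formal consequence of axiom~(1). For the letters: set $\push_q(a):=z_{q,a}$ for $a\in\fpgens$. Applying axiom~(1) to $1=1\cdot 1$ forces $\push_q(1)=1$ (an idempotent in the free group $F(\cX)$ is trivial), and then applying it to $1=a\cdot\ol a$ forces $\push_{q+\widetilde\fibration(a)}(\ol a)=\ol{\push_q(a)}$; so I would \emph{define} $\push_q(\ol a):=\ol{z_{q-\widetilde\fibration(a),a}}$. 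For a word $\ell_1\cdots\ell_n$ in the alphabet $\fpgens\sqcup\fpgens^{-1}$ I then set
\[
	\push_q(\ell_1\cdots\ell_n):=\push_q(\ell_1)\cdot\push_{q+\widetilde\fibration(\ell_1)}(\ell_2)\cdots\push_{q+\widetilde\fibration(\ell_1\cdots\ell_{n-1})}(\ell_n),
\]
and $\push_q(\varnothing):=1$; the running index records that after reading a prefix $w_0$ one sits ``at height $q+\widetilde\fibration(w_0)$''.

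Next I would check that this descends to $F(\fpgens)$. Inserting a cancelling pair $\ell\ol\ell$ into a word, immediately after a prefix of $\widetilde\fibration$-value $q'$, multiplies the product by the inserted factor $\push_{q'}(\ell)\cdot\push_{q'+\widetilde\fibration(\ell)}(\ol\ell)$, which equals $1$ by the choice of values on letters; moreover it does not change the index at which the remaining suffix is read, since $\widetilde\fibration(\ell)+\widetilde\fibration(\ol\ell)=0$. Hence the value is unchanged under free reduction, and $\push$ is a well-defined map $\mathbb Z\times F(\fpgens)\to F(\cX)$. Axiom~(1) is then immediate: splitting the literal concatenation $ww'$ at its junction gives exactly $\push_q(w)\cdot\push_{q+\widetilde\fibration(w)}(w')$, and well-definedness lets us identify $w\cdot w'$ with that concatenation.

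For axiom~(2) I would prove the stronger auxiliary statement that in $G$ one has $\tilde\iota(\push_q(w))\eqg{G}u_q\,w\,\ol u_{q+\widetilde\fibration(w)}$ for all $q\in\mathbb Z$ and $w\in F(\fpgens)$, by induction on the length of a reduced word $w$. The base cases $w=a$ and $w=\ol a$ are exactly the defining property $\tilde\iota(z_{q,a})\eqg{G}u_q\,a\,\ol u_{q+\widetilde\fibration(a)}$ of the chosen $z_{q,a}$ (for $\ol a$ one applies it at the shifted index $q-\widetilde\fibration(a)$ and inverts), and the inductive step $w=w_0\ell$ follows from axiom~(1) together with the telescoping of the $u_\bullet$'s. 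Specializing to $q=0$ and $w=\tilde\iota(x)$ with $x\in\cX$: here $u_0=1$ and $\widetilde\fibration(\tilde\iota(x))=\fibration(\iota(x))=0$ because $\iota(x)\in K=\ker\fibration$, so $\tilde\iota(\push_0(\tilde\iota(x)))\eqg{G}\tilde\iota(x)$; since $\iota$ is injective and the defining diagram commutes, this yields $\push_0(\tilde\iota(x))\eqg{K}x$, which is axiom~(2).

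Finally, uniqueness: any push-down map with $\push_q(a)=z_{q,a}$ satisfies $\push_q(1)=1$, hence has the values $\push_q(\ol a)$ forced as above, and then axiom~(1) iterated over the letters of a reduced word determines $\push_q(w)$ for every $w\in F(\fpgens)$. I expect the only genuinely delicate point to be the well-definedness on $F(\fpgens)$ — verifying that the height-indexed product of the $\push_q(\ell)$'s is invariant under free reduction while keeping the index bookkeeping consistent — together with the correct shifting of indices in the base case $w=\ol a$; the rest is formal manipulation and repeated use of the defining property of the $z_{q,a}$.
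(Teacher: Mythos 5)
Your proof is correct. The paper itself does not reprove this lemma --- it simply cites \cite[Lemma 2.10]{UniformBounds-25} --- but your construction (forced values on letters and inverse letters, the height-indexed product over a spelling, invariance under insertion of cancelling pairs since $\push_{q'}(\ell)\push_{q'+\widetilde\fibration(\ell)}(\ol\ell)=1$ and $\widetilde\fibration(\ell\ol\ell)=0$, the telescoping induction giving $\tilde\iota(\push_q(w))\eqg{G}u_q w\ol u_{q+\widetilde\fibration(w)}$, and uniqueness from iterating axiom~(1)) is exactly the standard argument one expects behind that citation, and all the index bookkeeping checks out.
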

\begin{proof}
    This is \cite[Lemma 2.10]{UniformBounds-25} for $Q= \mathbb Z$.
\end{proof}

\begin{theorem}[Push-down]\label{thm:push-down}
	Suppose that we are given a short exact sequence
	\[
		\shortexactsequence KG{\mathbb Z}
	\]
	with $ G = \presentation{\fpgens}{\mathcal C} $ and $K = \presentation{\cX}{\mathcal R}$ finitely presented. Let $ (\alpha_G, \rho_G) $ be an area-radius pair for $G$. Let $ \push $ be a push-down map for the sequence. Suppose that
	\[
		\max_{C \in \fprels, \abs{q}_{\widetilde\fibration(\fpgens )} \leq N}\Area_\relations(\push_q(C)) \leq f(N)
	\]
	for some function $ f \colon \NN \to \NN $. Then the Dehn function of $K$ satisfies \[
		\Dehn{K} \asympleq \alpha_{G}(N) \cdot f(\radius{G}).
	\]
\end{theorem}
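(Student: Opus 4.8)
The plan is to take an arbitrary null-homotopic word $w \in F(\fpgens)$ with $\abs{w}_{\fpgens} \le N$, use the area-radius pair for $G$ to write $w = \prod_{i=1}^k \ol{u}_i C_i u_i$ with $k \le \alpha_G(N)$ and $\abs{u_i}_{\fpgens} \le \rho_G(N)$ and $C_i \in \fprels$, and then apply the push-down map $\push_0$ to this equality and estimate the area in $K$ of the resulting word in $F(\cX)$. First I would check the following compatibility: since $\push_q$ is multiplicative in the sense of \cref{def:push-down}(1) and $w =_G 1$ means $\widetilde\fibration(w) = 0$, applying $\push_0$ to the factorization of $w$ produces a product $\push_0(w) = \prod_{i=1}^k \push_{q_i}\!\big(\ol{u}_i C_i u_i\big)$ where $q_i = \widetilde\fibration\big(\ol{u}_1 C_1 u_1 \cdots \ol{u}_{i-1} C_{i-1} u_{i-1}\big)$, using that each $C_i$ and each $\ol{u}_j C_j u_j$ is killed by $\widetilde\fibration$. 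In particular $\abs{q_i}$ is bounded by a constant multiple of $\max_j \abs{u_j}_{\fpgens} \le \rho_G(N)$, up to the (bounded) $\widetilde\fibration$-length of the relators, so each $q_i$ has $\abs{q_i}_{\widetilde\fibration(\fpgens)} \asympleq \rho_G(N)$.

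Next I would bound the area in $K$ of each factor $\push_{q_i}(\ol{u}_i C_i u_i)$. Using \cref{def:push-down}(1) again, $\push_{q_i}(\ol u_i C_i u_i) = \push_{q_i}(\ol u_i) \cdot \push_{q_i'}(C_i) \cdot \push_{q_i''}(u_i)$ for appropriate shifts $q_i', q_i''$, and one checks that $\push_{q_i''}(u_i)$ and $\push_{q_i}(\ol u_i)$ are mutually inverse in $F(\cX)$ up to free reduction (apply condition (1) to $u_i \cdot \ol u_i = 1$, noting $\push_{q}(1) = 1$ which follows from (1) with $w = w' = 1$). Hence in $K$ the factor $\push_{q_i}(\ol u_i C_i u_i)$ equals a conjugate of $\push_{q_i'}(C_i)$, and crucially its area in $K$ equals $\Area_\relations(\push_{q_i'}(C_i))$ — conjugation does not change area — which by hypothesis is at most $f(N')$ where $N' \asymp \rho_G(N)$, since $\abs{q_i'}_{\widetilde\fibration(\fpgens)} \asympleq \rho_G(N)$. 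I would also need to know that $\push_0(w) =_K w$ when $w \in \tilde\iota(F(\cX))$ is a word representing the null-homotopic word we started with in the presentation $\presentation{\cX}{\relations}$ of $K$; this uses condition (2) extended multiplicatively via condition (1), so that $\push_0 \circ \tilde\iota$ agrees with the identity on $K$.

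Summing over the $k \le \alpha_G(N)$ factors gives $\Area_\relations(\push_0(w)) \le \sum_{i=1}^k \Area_\relations(\push_{q_i'}(C_i)) \le \alpha_G(N) \cdot f(C\rho_G(N) + C)$, which is $\asympleq \alpha_G(N) \cdot f(\rho_G(N))$ after absorbing constants into the $\asymp$-relation. Finally, to conclude a bound on $\Dehn{K}$ itself I would take an arbitrary null-homotopic $v \in F(\cX)$ with $\abs{v}_\cX \le N$, push its image $\tilde\iota(v) \in F(\fpgens)$ forward: $\abs{\tilde\iota(v)}_{\fpgens} \le C'N$ for a constant $C'$ depending on the generators, $\push_0(\tilde\iota(v)) =_K v$ by the above, and applying the estimate just obtained to $w = \tilde\iota(v)$ yields $\Area_\relations(v) = \Area_\relations(\push_0(\tilde\iota(v))) \asympleq \alpha_G(C'N)\cdot f(\rho_G(C'N))$, i.e.\ $\Dehn{K}(N) \asympleq \alpha_G(N)\cdot f(\rho_G(N))$.

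I expect the main obstacle to be the careful bookkeeping of the shift parameters $q_i$ and the verification that each one stays of size $\asympleq \rho_G(N)$: this requires controlling $\widetilde\fibration$ on the partial products $\ol u_1 C_1 u_1 \cdots \ol u_{i-1}C_{i-1}u_{i-1}$, and the cleanest way to do this is to first replace the factorization by one in which the conjugating words $u_i$ are themselves killed by $\widetilde\fibration$ is \emph{not} available, so one instead observes that $\widetilde\fibration(\ol u_i C_i u_i) = 0$ forces the partial products to land in a bounded $\widetilde\fibration$-window determined only by a single $\abs{u_i}_{\fpgens}$ at a time, not their cumulative length. A secondary technical point is checking that $\Area_\relations$ is genuinely conjugation-invariant and sub-additive under products in $F(\cX)$, which is immediate from the definition in \cref{sec:definition-area} but must be invoked explicitly. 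Everything else is a formal manipulation with the two axioms of a push-down map.
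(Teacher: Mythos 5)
Your proposal follows what is essentially the intended argument (the paper itself does not reprove this statement but cites \cite[Theorem 2.11]{UniformBounds-25}): lift a null-homotopic word $v\in F(\mathcal X)$ to $\tilde\iota(v)\in F(\fpgens)$, fill it in $G$ using the area--radius pair, apply $\push_0$ to the filling, and use the cocycle property to turn each pushed conjugate $\push_{q_i}(\inv u_i C_i u_i)$ into a genuine conjugate of some $\push_{q_i'}(C_i)$ with $\abs{q_i'}\leq\rho_G(C'N)$. Your bookkeeping of the shifts is correct (in fact all the $q_i$ vanish, since $\widetilde h$ kills every relator of $G$ and hence every conjugate $\inv u_jC_ju_j$; only the inner shift $q_i'=-\widetilde h(u_i)$ is nonzero, and it is controlled by $\abs{u_i}$), and the verification that $\push_q(1)=1$ and $\push_q(\inv u)=\push_{q-\widetilde h(u)}(u)^{-1}$ is exactly right. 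This gives $\Area_{\mathcal R}\big(\push_0(\tilde\iota(v))\big)\leq \alpha_G(C'N)\cdot f(\rho_G(C'N))$, which is the heart of the matter.

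There is, however, one genuine gap at the very last step: you assert $\Area_{\mathcal R}(v)=\Area_{\mathcal R}\big(\push_0(\tilde\iota(v))\big)$. This is false in general. The words $v$ and $\push_0(\tilde\iota(v))$ are equal in $K$ but not in $F(\mathcal X)$ (condition (2) of \cref{def:push-down} only gives $\push_0(\tilde\iota(x))=_Kx$, not a free equality), and two words representing the same element of $K$ need not have the same area. What you actually get is
\[
\Area_{\mathcal R}(v)\;\leq\;\Area_{\mathcal R}\big(\push_0(\tilde\iota(v))\big)+\Area_{\mathcal R}\big(v\cdot\overline{\push_0(\tilde\iota(v))}\big),
\]
and the second term must be controlled separately. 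The standard fix is a telescoping (``ladder'') argument: writing $v=x_1^{\epsilon_1}\cdots x_m^{\epsilon_m}$ and $P_j=\push_0(\tilde\iota(x_j^{\epsilon_j}))$, one has $\push_0(\tilde\iota(v))=P_1\cdots P_m$ because every $\tilde\iota(x_j)$ has $\widetilde h=0$, and
\[
v\cdot\overline{\push_0(\tilde\iota(v))}\;=\;\prod_{j=1}^{m}\,(x_1^{\epsilon_1}\cdots x_{j-1}^{\epsilon_{j-1}})\big(x_j^{\epsilon_j}\overline{P_j}\big)(x_1^{\epsilon_1}\cdots x_{j-1}^{\epsilon_{j-1}})^{-1}
\]
up to reordering the conjugating words, where each $x_j^{\epsilon_j}\overline{P_j}$ is a null-homotopic word of length bounded by a constant depending only on the finitely many choices defining $\push$ and $\tilde\iota$; hence each has area at most some constant $D$, and the whole correction term is at most $D\cdot\abs{v}$, which is absorbed by $\asympleq$. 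Note that this is precisely where condition (2) of the push-down map enters quantitatively; without this step your argument only bounds the area of $\push_0(\tilde\iota(v))$, not of $v$. With this addition the proof is complete.
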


\begin{proof}
    See \cite[Theorem 2.11]{UniformBounds-25} for $Q=\mathbb Z$.
\end{proof}

\renewcommand{\fibration}{\psi}

\def\r{}
\def\n{}
\RenewDocumentCommand{\K}{O{n} O{r} O{}}{K^{#1}_{#2}(\ifblank{#3}{#2}{#3})}
\RenewDocumentCommand{\Fii}{m O{r}}{F^{(#1)}_{#2}}
\RenewDocumentCommand{\dirprod}{O{r} O{n}}{%
	\ifthenelse{\equal{#2}{3}}{%
		\freecopy{#1}{a} \times \freecopy{#1}{b} \times \freecopy{#1}{c}%
	}{%
		\freecopy{#1}{1} \times \dots \times \freecopy{#1}{#2}%
	}%
}

\section{Quadratic Dehn function}\label{sec:quadratic}

	We are now interested in the Dehn functions of subgroups of type $\mathcal F_{n-1}$ in a direct product of $n$ free groups. As we have seen in the previous paper \cite[Section 4.3]{UniformBounds-25}, all these subgroups are commensurable to either direct products of (finitely many, finitely generated) free groups, or to one of the groups $K_{m_1,\dots,m_n}(r)$ for integers $m_1,\dots,m_n,~r$ with $m_1,\dots,m_n\geq r\geq 1$. Here the group $K_{m_1,\dots,m_n}(r)$ is defined as the kernel of a homomorphisms $\psi\colon F_{m_1}\times \cdots \times F_{m_n}\to \mathbb{Z}^r$ from a direct product of $n$ free groups of ranks $m_1,\dots,m_n$ to a free abelian group of rank $r$, where the homomorphism $\psi$ is required to be surjective on each factor. Note that different choices of $\psi$ give isomorphic groups (see for example \cite{Dison-08}).

    The Dehn function of $K_{m_1,\dots,m_n}(r)$ is independent of the values of $m_1,\dots,m_n\geq2$, and it only depends on $n$ and $r$ (see \cite[Theorem 4.4]{UniformBounds-25}). Moreover, for $r=1$ the Dehn function had already been proved to be quadratic for all $n\geq 3$ \cite{carter2017stallings}. Therefore, in the following we will only focus on the groups $\K := K_{r,\dots,r}(r)$ for $r\geq 2$, where $n$ denotes the number of factors.

In the previous paper \cite{UniformBounds-25}, we have shown that the Dehn functions of $\K$ have a uniform polynomial upper bound, independent of $n$ or $r$.  This was done using the push-down technique of \cref{thm:push-down}. The upper bound we produce is, however, always at least quartic: even if in some cases the area of the push of the relations is quadratic, by \cref{thm:push-down} the area of the push must be multiplied by the Dehn function of the ambient group to get an estimate for the Dehn function of the kernel. This estimate is indeed not sharp in many cases. Kropholler and the fourth author had proved that, whenever $ \ceil{\frac r2} \leq \frac n4 $, the Dehn function of $ \K $ is quadratic \cite{KrophollerLlosa}.

In this section, we aim to improve this result, and prove that $ \Dehn\K \asymp N^2 $ whenever $ n \geq r+2 \ge 4$. The proof strategy goes as follows: for every element $g\in \K$, we define a \emph{normal form} associated with it, which is a word representing $g$ canonically.
By using this normal form, it is possible to subdivide the van Kampen diagram associated with a trivial word $w$ into triangles similarly to what was done in \cite{carter2017stallings,KrophollerLlosa}. Then, by manipulating the normal form, we are able to prove that every such triangle may be filled in so that its area is bounded by a quadratic function of its perimeter. It will then follow that the total area of the van Kampen diagram is bounded by a quadratic function in the length of the word $w$.

\subsection{Presentations for \texorpdfstring{$\K$}{Kⁿᵣ(r)}}\label{sec:presentations}

	Fix integers $n\geq r+2\geq 2$.
	For $ \alpha \in \range n $, let
	\[\freecopy r \alpha = \left\langle\fpgen 1 \alpha,\dots, \fpgen r \alpha\right\rangle;\]
    be a non-abelian free group of rank $r$. Denote by $ \fpgens[\alpha] $ the ordered tuple of generators
	\[\Aii \alpha =\left(\aii \alpha1,\ldots,\aii \alpha r\right)\]
	and define
	\[\A=\bigcup_{\alpha=1}^n \Aii \alpha.\]

	We define
	\[
		\K=\ker\left(\fibration \colon \dirprod \to \ZZ^r\right),
	\]
	where $\fibration$ is the surjective morphism sending $\fpgen j \alpha$ to the $j$-th basis vector $e_j$. We now recall from \cite{UniformBounds-25} explicit finite presentations for these groups, which we are going to need in the next sections.

	We denote by
	\[\prii \alpha \colon \dirprod \to \Fii \alpha\]
	the projection onto the $\alpha$-th factor. We also denote by $ \prii \alpha $ the restriction $\prii \alpha\colon \K \to \Fii \alpha$ to the kernel.
	Moreover, in the following we often regard $\Fii \alpha $ as a subgroup of $\freecopy r1 \times \dots \times \freecopy rn$, with respect to the natural inclusion.

	Define 
	\[\xii \alpha i = \fpgen i\alpha  \invfpgen in \in \K\]
	for $ \alpha \in \range {n-1} $ and $ i \in \range r $. Define $ \Xii \alpha $ as ordered tuple
	\[
		\Xii \alpha=\left(\xii \alpha 1,\dots,\xii \alpha r\right).
	\]
	and set
	\[\X=\bigcup_{\alpha = 1}^{n-1}\Xii \alpha.\]
    From \cite[Lemma 3.1]{UniformBounds-25} we have that the set $\X$ generates $\K$.

	Consider the following sets of relations:
	\begin{align*}
		\R_1 & \coloneq \set*{\left[x_i, y_i \right] \Suchthat
		{\begin{aligned}
					  & x_i=\kgen i\alpha ,\ y_i=\kgen i\beta               \\
					  & i \in \range r,\ \alpha \neq \beta \in \range {n-1}
				 \end{aligned}}}        \\
		\R_2 & \coloneq \set*{\left[x_i, y_j\ol{z}_j \right] \Suchthat
			\begin{aligned}
				 & x_i=\kgen i\alpha ,\ y_i=\kgen i\beta ,\ z_i=\kgen i\gamma        \\
				 & i \neq j \in \range r,                                            \\
				 & \alpha, \beta, \gamma  \in \range {n-1} \text{ pairwise distinct}
			\end{aligned}
		}
	\end{align*}
	and define $\mathcal R := \R_1 \cup \R_2$. By \cite[Theorem 3.13]{UniformBounds-25}, a presentation for $\K$ is given by $\presentation \X \R$ for all $n\geq r+2\geq 2$. Note that the presentations are more complicated if we remove the conditions $n\geq r+2\geq 2$, see \cite[Section 3.1.2]{UniformBounds-25}.

\subsection{Standard and diagonal free subgroups of \texorpdfstring{$\K$}{Kⁿᵣ(r)}}\label{sec:free-subgroups}

In what follows,
we will often need to consider the same word written in different alphabets as described in \cref{sec:substitutions}.
Recall that, if $w\in\free{\afgen 1,\ldots, \afgen t}$, then $w(\eta_1,\ldots, \eta_t)$ is the element in $\free{\eta_1,\ldots,\eta_t}$ obtained from $w$ by substituting $\afgen i$ with $\eta_i$.

In addition to the standard subsets of generators $ \kgens[\alpha] \subseteq \kgens$ for $\K$, we often employ a diagonal set $ \diaggens = (\kgen 11, \dots, \kgen rr) $. This is the key point where we use that $ n \geq r + 2 $: this assumption implies that $ \Delta $ is well-defined, and moreover we have some leeway from the fact that $ \kgens[r+1] $ is also well-defined (i.e.~$ r+1 \leq n-1 $) and is disjoint from $ \Delta $. This will be crucial in order to perform some word manipulations with a small number of relations.
Similarly, we define $ \inv \Delta \coloneq (\invkgen 11, \dots, \invkgen rr) $.

\begin{lemma}\label{lem:alphabet-X-generate-free-group}
	The subgroup of $\K$ represented by words written in the alphabet $\kgens[i]$ is a free group for $ i \in \range {n-1}$.
	The subgroup of $\K$ represented by words written in the alphabet $\Delta$ is a free group.
\end{lemma}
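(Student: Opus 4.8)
The plan is to exhibit, for each relevant alphabet, an explicit retraction of $\K$ (or of a convenient overgroup) onto the subgroup generated by that alphabet, and to observe that the image is free. First I would handle the standard alphabets $\kgens[i]$. Recall that $\kgen i\alpha = \fpgen i\alpha \invfpgen in$, so the subgroup $\langle \kgens[\alpha]\rangle$ lies in $\Fii \alpha \times \Fii n \subseteq \dirprod$. Consider the projection $\prii \alpha\colon \dirprod \to \Fii \alpha$ restricted to this subgroup; it sends $\kgen i\alpha \mapsto \fpgen i\alpha$. Since the $\fpgen 1\alpha,\dots,\fpgen r\alpha$ freely generate $\Fii\alpha$, the composite $\free{\kgens[\alpha]} \to \langle\kgens[\alpha]\rangle \to \Fii\alpha$ is the canonical isomorphism $\free{\kgens[\alpha]}\xrightarrow{\sim}\Fii\alpha$ on generators, hence is injective; therefore the first map $\free{\kgens[\alpha]}\to\langle\kgens[\alpha]\rangle$ is injective, i.e. $\langle\kgens[\alpha]\rangle$ is free of rank $r$.

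For the diagonal alphabet $\Delta = (\kgen 11,\dots,\kgen rr)$, the same idea applies but the relevant projection is different for each generator: $\kgen ii$ lives in $\Fii i \times \Fii n$ and has $i$-th coordinate $\fpgen ii$. I would compose the inclusion $\langle\Delta\rangle \hookrightarrow \dirprod$ with the map $\dirprod \to \Fii 1\times\cdots\times\Fii r$ given by $(\prii 1,\dots,\prii r)$ and then, in the $i$-th factor $\Fii i$, further project onto the cyclic free factor $\langle\fpgen ii\rangle$ (using that $\Fii i = \free{\fpgen 1i,\dots,\fpgen ri}$ retracts onto $\langle\fpgen ii\rangle$ by killing the other generators). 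The resulting homomorphism $\langle\Delta\rangle \to \langle\fpgen 11\rangle\times\cdots\times\langle\fpgen rr\rangle \cong \ZZ^r$ sends $\kgen ii\mapsto e_i$, so $\langle\Delta\rangle$ surjects onto $\ZZ^r$; but this does not immediately give freeness. Instead I would argue directly: the map $\free\Delta \to \langle\Delta\rangle \hookrightarrow \dirprod \xrightarrow{\prii i} \Fii i$ sends $\afgen j \mapsto \fpgen ii$ if $j=i$ and $\afgen j\mapsto 1$ otherwise (since $\prii i(\kgen jj) = \prii i(\fpgen jj\invfpgen jn)=1$ for $j\neq i$); combining all $i$ gives a map $\free\Delta \to \prod_{i=1}^r \Fii i$ whose image lies in $\prod_i \langle\fpgen ii\rangle$, which is abelian, so this detects only the abelianization. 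To see injectivity one cannot use an abelian target; instead, note that $\langle\Delta\rangle$ is a subgroup of the free group $\Fii 1 * \cdots$ — no; rather, use that any subgroup of a direct product of free groups that projects injectively to a single factor is free. Concretely: the subgroup $\langle\Delta\rangle$ projects to $\Fii{r+1}$ — wait, $\prii{r+1}(\kgen ii) = 1$ since $\kgen ii$ is supported on factors $i$ and $n$ only, and $i, n \neq r+1$. That is not helpful either.

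The clean argument, which I would use, is this: the subgroup $\langle\Delta\rangle \le \dirprod$ maps under the single projection $\prii n$ to $\Fii n$, sending $\kgen ii = \fpgen ii\invfpgen in \mapsto \invfpgen in$. Since $\invfpgen 1n,\dots,\invfpgen rn$ freely generate $\Fii n$, this composite $\free\Delta \to \langle\Delta\rangle \to \Fii n$ is an isomorphism on the nose, hence injective, hence $\free\Delta \to \langle\Delta\rangle$ is injective and $\langle\Delta\rangle$ is free of rank $r$. The same remark even re-proves the $\kgens[\alpha]$ case via $\prii n$. \textbf{The main obstacle} is purely bookkeeping: one must be careful that the claimed map $\free{\text{alphabet}} \to F_n$ really is defined by $\kgen i{\cdot} \mapsto \invfpgen in$ on generators and that the $\invfpgen in$ do form a free basis of $\Fii n$ — both immediate — and that "word written in the alphabet $\mathcal X$" is interpreted, as in \cref{sec:substitutions}, as the image of $\free{\mathcal X}$ in $\K$, so that injectivity of the composite to $\Fii n$ is exactly the statement that this image is free on $\mathcal X$. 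There are no real difficulties; the lemma is essentially an observation once the right projection ($\prii n$) is chosen.
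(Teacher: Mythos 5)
Your final argument is correct and is essentially the paper's own proof: for $\kgens[i]$ one projects via $\prii i$ onto $\Fii i$ (sending $\kgen ji\mapsto\fpgen ji$), and for $\Delta$ one projects via $\prii n$ onto $\Fii n$ (sending $\kgen ii\mapsto\invfpgen in$), in each case hitting a free basis so that the composite $\free{S}\to\generatedby S\to\Fii{\cdot}$ is injective. The intermediate detours in your write-up (the abelian target, the $\prii{r+1}$ attempt) are dead ends you correctly discard, and the clean version you settle on matches the paper.
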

\begin{proof}
	Call $X^{(i)} $ (resp.~$D$) the subgroup of $\K$ represented by words written in the alphabet $\kgens[i]$ (resp.~$ \Delta$).
	The map
	$X^{(i)}\to \langle \fpgens[i]\rangle$
	(resp.~$ D\to\langle  \fpgens[n]\rangle $) sending $x\toi_j$
	to $a\toi_j$ (resp.~$x\toi_i$
	to $a\ton_j$) induces an injective homomorphism of groups. Now, the statement follows from the fact that $\langle \fpgens[i]\rangle$ is a free group for all $i \in \range n$.
\end{proof}

\begin{remark}
	In the lemma above, we proved that for some subsets $ S $ of the generators $ \kgens $ of $\K$, the subgroup $\generatedby S < \K$ is a free group with $S$ as generating set.
	However, we should keep in mind the distinction between $ \generatedby S < \K $, and the \emph{abstract} free group $ \free S $ with generating set $S$, even if they are naturally isomorphic: the former is a subgroup of $\K$, while the latter is naturally a subgroup of $ \free{\kgens} $. In other words, elements of $ \generatedby S $ are elements of $\K$, while elements of $ \free S $ are formal words, up to free equivalence, in the alphabet $ S \subseteq \kgens $.

	For the same reason, we also keep two different symbols to denote $ \freecopy ri \isom \free{\fpgens[i]}$: the first should be thought of as a subgroup of $ \dirprod $, while the second denotes words, up to free equivalence, in the letters $ \fpgen ji $. This is particularly useful as we will be able to perform substitutions as described in \cref{sec:substitutions}.
\end{remark}
\begin{definition}
	Let $ w \in \free{S} $ be an element of a free group with generating set $S$, and let $ s \in S $. We say that $w$ is \emph{$s$-balanced} if $w$ is in the kernel of the homomorphism $ \free{S} \to \ZZ $ that sends $ s \mapsto 1 $, and $ s' \mapsto 0 $ for all $ s' \in S \setminus \set s $, or equivalently, if any word in the alphabet $S \cup S^{-1}$ that represents $w$ contains an equal number of $s$ and $ \inv s $.
\end{definition}

\subsection{Normal form}
In this section, we associate to each $g\in K$ a canonical element of $ \free\kgens $  representing it.

\begin{definition}\label{def:normal-form}
	Let $g \in \K$. The  \emph{normal form} of $g$ is an element $ \normal g \in \free\kgens $ representing $g$ that decomposes as
	\[
		\normal g = \normal[d]g \cdot \normal[1]g \cdot \normal[2]g \cdots \normal[n-1]g,
	\]
	where $\normal[i]g \in \free{\kgens[i]}$ for $ i \in \range {n-1} $ and $\normal[d]g$ belongs to the commutator subgroup $\commsub{\free{\Delta}}$.
\end{definition}

\begin{lemma}
	The element $ \normal[d]g $ represents an element in the subgroup
	\[
		\commsub*{\freecopy rn} \subset \K \subset \dirprod.
	\]
\end{lemma}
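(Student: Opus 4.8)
The plan is to identify, via the coordinate projections, which element of $\K$ the formal word $w_g^\Delta$ represents. Write $\eta\colon\free\Delta\to\K$ for the canonical homomorphism attached to the subset $\Delta\subseteq\K$. I will show that $\prii\alpha(\eta(w_g^\Delta))=1$ for every $\alpha\in\range{n-1}$, so that $\eta(w_g^\Delta)$ is supported on the last factor; since it also lies in $\K=\ker\psi$, this will pin it down to $\commsub*{\Fii n}$.

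First I would record how the projections act on the diagonal generators. Because $\xii ii=\aii ii\maii ni$, and for $\alpha\in\range{n-1}$ both letters $\aii ii$ and $\maii ni$ sit in factors different from the $\alpha$-th whenever $i\neq\alpha$, we get $\prii\alpha(\xii ii)=\aii\alpha\alpha$ if $i=\alpha$ and $\prii\alpha(\xii ii)=1$ otherwise. Equivalently, the composite $\prii\alpha\circ\eta\colon\free\Delta\to\Fii\alpha$ is the substitution homomorphism sending $\xii\alpha\alpha\mapsto\aii\alpha\alpha$ and $\xii ii\mapsto1$ for $i\neq\alpha$. Now, by \cref{def:normal-form} we have $w_g^\Delta\in\commsub{\free\Delta}$, so $w_g^\Delta$ has exponent sum zero in each letter $\xii ii$; applying the substitution just described, $\prii\alpha(\eta(w_g^\Delta))$ equals $\aii\alpha\alpha$ raised to the exponent sum of $\xii\alpha\alpha$ in $w_g^\Delta$, hence is trivial, for every $\alpha\in\range{n-1}$.

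To conclude: an element of $\dirprod$ whose first $n-1$ coordinates are trivial lies in $\Fii n$ (embedded as the last factor), so $\eta(w_g^\Delta)\in\Fii n$. Since moreover $\eta(w_g^\Delta)\in\K=\ker\psi$ and the restriction of $\psi$ to $\Fii n$ is the abelianization map $\Fii n\to\ZZ^r$, $\aii nj\mapsto e_j$, whose kernel is $\commsub*{\Fii n}$, we obtain $\eta(w_g^\Delta)\in\Fii n\cap\K=\commsub*{\Fii n}$, which is the claim.

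The argument is short, and I do not expect a genuine obstacle; the only point demanding a little care is the bookkeeping between the abstract free group $\free\Delta$, in which $w_g^\Delta$ lives as a formal word, and its image in $\K\subset\dirprod$, together with the elementary observation that membership in $\commsub{\free\Delta}$ is exactly what makes all exponent sums vanish. This distinction is already set up in the remark following \cref{lem:alphabet-X-generate-free-group}.
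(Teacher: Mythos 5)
Your proof is correct and follows essentially the same route as the paper: project onto each of the first $n-1$ factors, use that membership in $\commsub{\free\Delta}$ forces all exponent sums to vanish so these projections are trivial, and then conclude via $\Fii n\cap\K=\commsub*{\Fii n}$. The only difference is that you spell out the last intersection (via $\psi|_{\Fii n}$ being the abelianization), which the paper leaves implicit.
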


\begin{proof}
	For $ i \in \range{n-1} $, the projection $ \prii{i}: \dirprod \to \freecopy ri $ sends $ \kgen jj $ to the trivial element for $ i \neq j $, and to $ \invfpgen ii $ for $ i = j $. Since $ \normal[d]g $ belongs to the commutator subgroup, it is $ \kgen ii $-balanced, so $ \prii{i}(\normal[d]g) $ is trivial.

	Since the projection to every factor except the last is trivial, then $ \normal[d]g $ represents an element in $ \freecopy rn $. However, since it is also an element of $\K$, it is contained in $ \freecopy rn \cap \K = \commsub{\freecopy rn} $.
\end{proof}

The following lemma assures that the normal form always exists and is unique.

\begin{lemma}\label{lem:norml-form-exists-unique}
	For every $g$ in $\K$, there exists a unique normal form. Moreover, for $ 1 \leq i \leq n-1 $, the factor $ \normal[i]g \in \free{\kgens[i]} $ is such that $ \normal[i]g(\fpgens[i]) $ represents the projection $ \prii{i} (g) $.%
\end{lemma}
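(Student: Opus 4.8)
The plan is to prove existence and uniqueness simultaneously by peeling off the factors one at a time, using the projections $\prii{i}$ and the explicit structure of $\K$ as a subgroup of $\dirprod$. First I would recall that, by \cref{lem:alphabet-X-generate-free-group}, the map $X^{(i)} \to \langle \fpgens[i] \rangle$ sending $\xii i j \mapsto \aii i j$ is an isomorphism onto the subgroup $\langle \fpgens[i]\rangle \le \Fii i$, and similarly $D \cong \commsub{\freecopy rn}$. The key structural fact I would establish first is that, for $g \in \K$, the projection $\prii i(g) \in \Fii i$ lies in the subgroup $\langle \fpgens[i]\rangle$ for $i \in \range{n-1}$ (because $\psi$ restricted to the $i$-th factor is the identity on the basis, and $g$ being in the kernel forces $\prii i(g)$ to have total exponent zero in each $\aii i j$ — wait, that is not quite it; more precisely, the components of $g$ must have matching abelianizations). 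Let me phrase it correctly: if $g \in \K$ then $\sum_{\alpha} \mathrm{ab}(\prii \alpha (g)) = 0$ in $\ZZ^r$, so $\mathrm{ab}(\prii i(g)) = -\sum_{\alpha \ne i}\mathrm{ab}(\prii \alpha(g))$; this does not by itself put $\prii i(g)$ in $\langle \fpgens[i]\rangle$. Instead, the cleanest route is to directly show that the candidate normal form can be constructed.

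For \textbf{existence}, I would argue as follows. Since $\X = \bigcup_{\alpha=1}^{n-1}\Xii\alpha$ generates $\K$, write $g$ as a word in the $\xii\alpha i$. Using the relations $\R_1$ (which say that generators from different alphabets $\kgens[\alpha], \kgens[\beta]$ with the same index $i$ commute) one would like to collect all the $\kgens[i]$-letters together; but generators with different indices from different alphabets do not commute outright — that is exactly what $\R_2$ controls. The honest approach: define, for $i \in \range{n-1}$, the element $w_i := (\prii i(g))$ rewritten in the alphabet $\kgens[i]$ via the isomorphism of \cref{lem:alphabet-X-generate-free-group} — this requires knowing $\prii i(g) \in \langle \fpgens[i]\rangle$. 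To see this last point, note that $\xii\alpha i = \fpgen i\alpha \invfpgen in$ projects under $\prii i$ to $\fpgen i i$ (when $\alpha = i$) and to the identity (when $\alpha \ne i$, $\alpha \le n-1$), so any product of the generators $\X$ projects under $\prii i$ into $\langle \fpgens[i]\rangle$; since $\X$ generates $\K$, indeed $\prii i(g) \in \langle \fpgens[i]\rangle$ for all $g \in \K$ and all $i \in \range{n-1}$. Now set $h := g \cdot (w_{n-1})^{-1}\cdots (w_1)^{-1}$ where each $w_i$ is interpreted as an element of $\K$; then $\prii i(h) = 1$ for every $i \in \range{n-1}$, so $h$ projects trivially to the first $n-1$ factors and hence lies in $\freecopy rn \cap \K = \commsub{\freecopy rn} = D$. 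Writing $h$ in the alphabet $\Delta$ (via the isomorphism $D \cong \commsub{\freecopy rn}$, noting $h$ lands in the commutator subgroup so its $\Delta$-word lies in $\commsub{\free\Delta}$) gives $\normal[d]g$, and we obtain $\normal g = \normal[d]g \cdot w_1 \cdots w_{n-1}$ as desired.

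For \textbf{uniqueness}, suppose $\normal[d]g \cdot \normal[1]g \cdots \normal[n-1]g$ and $v^\Delta \cdot v^{(1)}\cdots v^{(n-1)}$ both represent $g$ in $\K$. Apply $\prii i$ for each $i \in \range{n-1}$: since $\normal[d]g, v^\Delta$ are $\kgen ii$-balanced they die, and the factors $\normal[j]g, v^{(j)}$ for $j \ne i$ also project trivially (their letters $\xii j k$ with $j \ne i$, $j \le n-1$ map to $1$), so $\prii i$ of each side equals $\normal[i]g(\fpgens[i])$ resp.\ $v^{(i)}(\fpgens[i])$; by injectivity of $\free{\kgens[i]} \hookrightarrow \Fii i$ (\cref{lem:alphabet-X-generate-free-group}) we get $\normal[i]g = v^{(i)}$ for all $i$, and this also proves the "moreover" clause. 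Cancelling these, $\normal[d]g$ and $v^\Delta$ represent the same element of $D$, and injectivity of $\free\Delta \hookrightarrow \Fii n$ forces $\normal[d]g = v^\Delta$.

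The \textbf{main obstacle} is the verification that $\prii i(g) \in \langle\fpgens[i]\rangle$ and, dually, that the $w_i$ constructed from these projections genuinely multiply back (together with a $D$-element) to $g$ — i.e.\ that nothing beyond the last factor is "left over" after removing $w_1,\dots,w_{n-1}$. This is where one uses that the generators $\xii\alpha i$ with $\alpha \le n-1$ have trivial $\prii i$-image unless $\alpha = i$; once that bookkeeping is set up, the rest is a clean diagram chase through the projections. I expect no difficulty from the relations $\R$ themselves: the proof is essentially about the group $\K$ as a concrete subgroup of $\dirprod$, not about the presentation.
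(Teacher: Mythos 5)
Your proposal is correct and follows essentially the same route as the paper: construct each $\normal[i]g$ from the projection $\prii i(g)$, observe that the leftover piece has trivial projection to the first $n-1$ factors and hence lies in $\commsub{\freecopy rn}$, which is expressible as a $\kgen ii$-balanced word in $\Delta$; uniqueness is obtained exactly as you describe, by projecting to each factor and invoking the freeness of the subgroups from \cref{lem:alphabet-X-generate-free-group}. (Your initial worry about whether $\prii i(g)$ lies in $\langle\fpgens[i]\rangle$ is a non-issue, since that subgroup is all of $\Fii i$, and you correctly resolve it.)
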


\begin{proof}
	We will first prove existence and then uniqueness.

	\emph{Existence.} For every $i \in \range n$, denote by $ g_i \in \freecopy ri = \free{\fpgens[i]} $ the projection of $g$ to the $i$-th factor. For $ i \in \range {n -1}$, we set $ \normal[i] g = g_i(\kgens[i]) \in \free{\kgens[i]} $: we have in particular that $ \prii{i} ( \normal[i]g ) = g_i $. Moreover, we define
	\[
		\normal[d]{g} \coloneq g_n\big(\inv\diaggens\big) \cdot \left(g_1\big(\diaggens\big) \cdots g_{n-1}\big(\diaggens\big)\right)^{-1}.
	\]

	We claim that the element $\normal[d]{g}$ is in $ \commsub{\free\diaggens} $.
	Indeed, if we project $ \normal[d]{g} $ to the $i$-th factor, for $ i \in \range r $, we get
	\[
		\left(\fpgen ii\right)^{\fibration_i(g_n)} \cdot \left(\left(\fpgen ii\right)^{-\fibration_i(g_1)} \cdots \left(\fpgen ii\right)^{-\fibration_i(g_{n-1})}\right)^{-1}
	\]
	where $ \fibration_i $ is the $i$-th component of the map $ \fibration \colon \dirprod \to \ZZ^r $, while for $i \in \range [r +1] {n -1}$ the projection is trivial.  Since $g$ is represented by $ g_1 \cdots g_n $ and $ \fibration(g) = 0 $ as $g \in \K$, the above expression is trivial.
	So $ \normal[d]{g} $ is $ \kgen ii $-balanced for all $i$, i.e.~it belongs to the commutator subgroup $ \commsub{\free\diaggens} $.

	Thus, the word
	\[
		w_g = \normal[d]{g} \cdot \normal[1]{g} \cdot \normal[2]{g} \cdots \normal[n-1]{g} \in \free{\kgens}
	\]
	satisfies the properties of the normal form. We just need to show that $w_g$ represents $g$.
	It is enough to prove that the projection $\prii i(w_g)$ coincides with $ g_i $, for all $ i \in \range  n$. For $ i \in \range{n-1} $, we observe that the projection $\prii j\big(\normal[i]{g}\big)$ of $w_g\toi$ satisfies:
	\[
		\prii j\big(\normal[i]{g}\big) =
		\begin{cases}
			1                             & \text{if } j \not\in \set{i, n}; \\
			g_i \left(\fpgens[i]\right)   & \text{if } j = i;                \\
			g_i\left(\invfpgens[n]\right) & \text{if } j=n.
		\end{cases}
	\]
	On the other hand, $\prii j\big(\normal[d]{g}\big)$ is nontrivial only if $j=n$ and in this case it is represented by the word
	\[
		\normal[d]{g}\left(\invfpgens[n]\right) = g_n\left(\fpgens[n]\right) \cdot \left(g_1\left(\invfpgens[n]\right) \cdots g_{n-1}\left(\invfpgens[n]\right)\right)^{-1}.
	\]
	Putting everything together, we obtain that
	\[
		\prii{i}(w_g) = g_i\left(\fpgens[i]\right),
	\]
	for every $i\in \range n$, as desired.

	\emph{Uniqueness.} Let
	\[
		v_g = v_g^\Delta \cdot v_g^{(1)}\cdot v_g^{(2)}\cdots v_g^{(n-1)}
	\]
	be another normal form for $g$ and let $w_g$ be as in the proof of the existence.%
	Since $v_g^\Delta$ represents an element in $F\ton_r$, its projection $\prii i(v_g^\Delta)$ represents the trivial element whenever $ i \in \range{ n-1}$.
	It is also clear from the definition that $\prii i(v^{(j)}_g)$ is nontrivial only if $j=i$ or $j=n$. Therefore, for $i \in \range{ n-1}$,
	\[\prii{i}(v_g^{(i)}) = \prii{i} (v_g) = \prii{i}(w_g) = \prii{i}(\normal[i]g),\] so $ v_g^{(i)}(\fpgens[i]) = \normal[i]g(\fpgens[i]) $, which implies $ v_g^{(i)} = \normal[i]g$.

	As the two elements of $ \free\Delta $
	\[ v_g^\Delta= v_g\cdot \big(v_g^{(1)}\cdot v_g^{(2)}\cdots v_g^{(n-1)}\big)^{-1},\]
	\[ w_g^\Delta= w_g\cdot \big(w_g^{(1)}\cdot w_g^{(2)}\cdots w_g^{(n-1)}\big)^{-1}\]
	represent the same element in $\K$ (because of the previous computations) and the subgroup generated by $\Delta$ is free (\cref{lem:alphabet-X-generate-free-group}), they coincide. We have therefore proven that the normal form is unique.

	Finally, the fact that $ \normal[i]g(\fpgens[i]) $ represents the projection $ \prii{i} (g)$ follows from the construction shown in the existence part.%
\end{proof}

\begin{lemma}
	Let $g\in \K$ and $\normal g \in \free\kgens$ be its normal form; then \( \length{\normal g}\leq 3\cdot \abs g_{\kgens} \).%
\end{lemma}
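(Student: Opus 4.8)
The plan is to read off the explicit description of the three types of pieces of the normal form from the proof of \cref{lem:norml-form-exists-unique}, and to bound the length of each piece separately against $\abs g_\kgens$. Fix a geodesic word $w = s_1 \cdots s_\ell \in \free\kgens$ representing $g$, so that $\ell = \abs g_\kgens$ and each $s_k$ lies in $\kgens[\alpha]^{\pm1}$ for a unique $\alpha \in \range{n-1}$. The computation hinges on how the projections act on generators: for $\alpha, i \in \range{n-1}$ one has $\prii i(\xii\alpha j) = \fpgen ji$ if $\alpha = i$ and $\prii i(\xii\alpha j) = 1$ otherwise, whereas $\prii n(\xii\alpha j) = \invfpgen jn$ for every $\alpha \in \range{n-1}$. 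Writing $g_k = \prii k(g)$, this shows that for $i \in \range{n-1}$ the reduced word $g_i \in \free{\fpgens[i]}$ is obtained from $w$ by deleting all letters not in $\kgens[i]^{\pm1}$ and relabelling, so $\abs{g_i}_{\fpgens[i]}$ is at most the number of $\kgens[i]^{\pm1}$-letters of $w$; and $g_n$ is obtained from $w$ by relabelling every one of its $\ell$ letters to a single $\fpgens[n]^{\pm1}$-letter, so $\abs{g_n}_{\fpgens[n]} \leq \ell = \abs g_\kgens$. In particular, since the sets $\kgens[i]^{\pm1}$ for $i \in \range{n-1}$ partition $\kgens^{\pm1}$, summing over $i$ gives $\sum_{i=1}^{n-1} \abs{g_i}_{\fpgens[i]} \leq \ell = \abs g_\kgens$.

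Next I would plug these bounds into the formulas for the pieces of the normal form recalled in \cref{lem:norml-form-exists-unique}, namely $\normal[i]g = g_i(\kgens[i])$ for $i \in \range{n-1}$ and $\normal[d]g = g_n\big(\inv\Delta\big)\cdot\big(g_1(\Delta)\cdots g_{n-1}(\Delta)\big)^{-1}$. Since each of these substitutions replaces a generator by a single generator (up to inversion), it preserves both reducedness and length; hence $\abs{\normal[i]g}_\kgens = \abs{g_i}_{\fpgens[i]}$ and $\abs{g_k(\Delta)}_\kgens = \abs{g_k(\inv\Delta)}_\kgens = \abs{g_k}_{\fpgens[k]}$. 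Applying the triangle inequality for word length to the defining product of $\normal[d]g$, and then the estimates of the previous paragraph, we obtain
\[
	\abs{\normal[d]g}_\kgens \leq \abs{g_n}_{\fpgens[n]} + \sum_{i=1}^{n-1} \abs{g_i}_{\fpgens[i]} \leq \abs g_\kgens + \abs g_\kgens = 2\abs g_\kgens.
\]
Finally, since $\normal g = \normal[d]g \cdot \normal[1]g \cdots \normal[n-1]g$ and the reduced length of a product of reduced words is at most the sum of their lengths, we conclude
\[
	\length{\normal g} \leq \abs{\normal[d]g}_\kgens + \sum_{i=1}^{n-1} \abs{\normal[i]g}_\kgens \leq 2\abs g_\kgens + \sum_{i=1}^{n-1} \abs{g_i}_{\fpgens[i]} \leq 3\abs g_\kgens.
\]

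This argument is essentially bookkeeping, so I do not anticipate a genuine obstacle; the only point that needs a little care is verifying at each step that passing from $w$ to the projections $g_k$, and then from the $g_k$ to the pieces $\normal[i]g$ and $\normal[d]g$, never increases length. This follows because the maps involved are either generator-to-generator substitutions (length-preserving and reducedness-preserving) or deletions of letters (which cannot lengthen a freely reduced word), together with the fact that the $\kgens[i]^{\pm1}$ partition $\kgens^{\pm1}$, which is exactly what prevents the contributions of the factors $\normal[1]g, \dots, \normal[n-1]g$ (and, in $\normal[d]g$, of $g_1(\Delta), \dots, g_{n-1}(\Delta)$) from being overcounted.
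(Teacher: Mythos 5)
Your proof is correct and follows essentially the same route as the paper: bound each piece $\normal[i]g$ by the number of $\kgens[i]$-letters of a geodesic word $w$ (summing to $\abs{w}$), and bound the diagonal piece by $\abs{g_n}+\sum_i\abs{g_i}\le 2\abs{w}$. The only cosmetic difference is that you read the diagonal bound off the explicit formula from the existence part of \cref{lem:norml-form-exists-unique}, whereas the paper obtains the same estimate by projecting $\normal[d]g$ to the $n$-th factor; these are equivalent computations.
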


\begin{proof}
	Let  $\normal g$ be the normal form of $g$ expressed as in \cref{def:normal-form}.
	Let $w \in \free \kgens$ be any word representing $g$. By \cref{lem:norml-form-exists-unique},
	\[
		\length{w_g\toi}=\length{w\toi_g\big(\fpgens[i]\big)} = \abs{\prii i(g)}_{\fpgens[i]},
	\]
	where $w\toi_g\big(\fpgens[i]\big)$ represents $\prii i(g)$ (for $1\leq i\leq g$). However, $\prii i(g)$ is represented by $\prii i(w)$, and this projection is obtained from $w$ by replacing the letters $x\toi_j$ with $a\toi_j$ and $x^{(k)}_j$ with $1$, for all $j,k \in \range n$ and $k\neq i$.
	Thus, $\length{w_g\toi}$ is at most the number of letters in $w$ belonging to the alphabet $\Xii i$.
	This is enough to prove that
	\begin{equation}\label{eq:inequality-between-length}
		\length{w_g^{(1)}\cdots w_g^{(n-1)}}\leq \length{w}.
	\end{equation}

	For the same reason, we have that $ \prii n(g) $ is obtained by replacing $ \kgen ji $ with $ \invfpgen jn$, so we get that $ \abs{\prii n(g)}_{\fpgens[n]} \leq \abs{w} $.	On the other hand, $\length{w_g^\Delta}=\length{\prii n\big(w_g^\Delta\big)}$ since, by \cref{lem:alphabet-X-generate-free-group}, the words written in the alphabet $\Delta$ generate $K$ as a free group. Therefore, we have
	\begin{align*}
		\length{w_g^\Delta}
		 & =     \length{\prii n\left(w_g^\Delta\right)}                                                        \\
		 & \leq  \length{\prii n\left(w_g\cdot\left(w_g^{(1)}\cdots w_g^{(n-1)}\right)^{-1}\right)}             \\
		 & \leq  \length{\prii n\left(w_g\right)}+\length{\prii n\left(w_g^{(1)}\cdots w_g^{(n-1)}\right)^{-1}} \\
		 & \leq  2\cdot\length{w}.
	\end{align*}
	where the last inequality is due to \cref{eq:inequality-between-length}, the fact that $\prii n(w_g)=\prii n(g)$ (as they both represent $\prii n(g)$ in the free group $\Fii n$) and the inequality $\length{\prii n(g)}\leq \length{w}$.
\end{proof}

\subsection{Quadratic upper bound} %

Throughout the whole section, we use the presentation described in \cref{sec:presentations}, and all areas will be considered with respect to it. The aim of this section is to prove the following proposition.

\begin{proposition}\label{prop:multiply-normal-forms}
	Let $ n \geq 4$ and $r \geq 2$ be natural numbers satisfying $ n \geq r+2 $. There exists $ C>0 $ such that, for all $g,h\in \K$, we have that
	\[
		\Area(w_gw_h\inv w_{gh}) \leq C \cdot \max\set*{\abs{w_g}^2, \abs{w_h}^2}.
	\]
	where $w_g,w_h,w_{gh}$ denote the normal forms of, respectively, $g, h, gh$.
\end{proposition}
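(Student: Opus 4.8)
We take $w_g,w_h,w_{gh}$ to be the normal forms from \cref{def:normal-form}. First observe that the non-diagonal factors behave freely: by \cref{lem:norml-form-exists-unique}, for $1\le i\le n-1$ both $\normal[i]{gh}$ and $\normal[i]{g}\normal[i]{h}$ map under $\kgen k i\mapsto\fpgen k i$ to $\prii i(g)\prii i(h)=\prii i(gh)$ in $\freecopy r i$, and since $\free{\Xii i}\to\freecopy r i$ is an isomorphism they are already equal in $\free{\Xii i}$, hence freely equal in $\free\kgens$. So it suffices to rewrite $w_gw_h$, at total area $\le C\max\{\abs{w_g}^2,\abs{w_h}^2\}$, into a word $u^\Delta\cdot\bigl(\normal[1]g\normal[1]h\bigr)\cdots\bigl(\normal[{n-1}]g\normal[{n-1}]h\bigr)$ with $u^\Delta\in\commsub{\free\diaggens}$: such a $u^\Delta$ then represents the same element of $\K$ as $\normal[d]{gh}$, so by \cref{lem:norml-form-exists-unique} and \cref{lem:alphabet-X-generate-free-group} it equals $\normal[d]{gh}$ in $\free\diaggens$, and the remaining word is freely trivial.

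Starting from $w_gw_h=\normal[d]g\,\normal[1]g\cdots\normal[{n-1}]g\,\normal[d]h\,\normal[1]h\cdots\normal[{n-1}]h$, I would bring the factors into this shape using a sequence of elementary moves whose number is bounded in terms of $n$ alone: \emph{swapping} two adjacent factors $v\in\generatedby{\Xii i}$, $v'\in\generatedby{\Xii j}$ with $i\ne j$, by writing $vv'=v'v\cdot c$, where the commutator $[\inv v,\inv{v'}]$, viewed as an element of $\dirprod$, is supported only on the $n$-th coordinate and lies in $\commsub{\freecopy r n}$, so it can be recorded as a word $c\in\commsub{\free\diaggens}$ with $\abs c\le 2(\abs v+\abs{v'})$; \emph{sliding} a diagonal factor $c\in\commsub{\free\diaggens}$ past an adjacent $v\in\generatedby{\Xii j}$, replacing it by $\inv v c v$ re-expressed over $\diaggens$ (again in $\commsub{\free\diaggens}$, of length $\le\abs c+2\abs v$); and \emph{merging} adjacent factors over a common alphabet (free, zero area). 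After these moves the diagonal material is collected at the front, and by the first paragraph we are done modulo the area estimates for the moves.

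The technical core is to bound those areas: a swap should cost $\le C\abs v\abs{v'}$ and a slide should cost $\le C(\abs c+\abs v)\abs v$. I would prove both by induction on word length, peeling off one letter at a time and using $[ab,y]=a[b,y]\inv a\,[a,y]$ and $[y,ab]=[y,a]\,a[y,b]\inv a$ (conjugation does not change area). This reduces everything to finitely many atomic identities, each involving at most one letter on either side; as $n$ and $r$ are fixed, each of these has area at most a constant $C(n,r)$, the ones with matching index being instances of $\mathcal{R}_1$ and the rest deduced from $\mathcal{R}_2$. The one subtle case is sliding $c\in\commsub{\free\diaggens}$ past a factor over $\Xii j$ with $j\le r$: then $\kgen j j$ is itself a letter of $\diaggens$ and the naive letter-by-letter slide leaves the alphabet $\diaggens$. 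Here I would first re-express $c$ over the modified diagonal alphabet $\diaggens'$ obtained from $\diaggens$ by replacing $\kgen j j$ with $\kgen j {r+1}$; this is exactly where the hypothesis $n\ge r+2$ enters, since it makes $\kgen j {r+1}$ available ($r+1\le n-1$) and $\diaggens'$ a legitimate diagonal alphabet, with $\commsub{\free{\diaggens'}}$ still represented inside $\commsub{\freecopy r n}$. The change of alphabet $c\leftrightarrow c'$ costs area $\le C\abs c^2$, and over $\diaggens'$, whose superscripts avoid $j$, the letter-by-letter slide then runs through the available atomic identities.

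Finally, since the number of moves is bounded by a constant depending only on $n$, and every mobile word occurring in the process — the original factors and every diagonal correction produced along the way — has length $O(\abs{w_g}+\abs{w_h})$ (each correction grows only by a bounded additive amount at each of the boundedly many steps), each move contributes area $\le C(\abs{w_g}+\abs{w_h})^2$, and summing gives $\Area(w_gw_h\inv w_{gh})\le C'\max\{\abs{w_g}^2,\abs{w_h}^2\}$. The main obstacle is precisely this last bookkeeping: one must sequence the moves and control the lengths of the accumulating diagonal corrections carefully enough that no correction is ever slid past a long factor more than boundedly often, so that the quadratic cost per move does not compound into a cubic total — the re-expression over $\diaggens'$ being the mechanism that keeps each individual slide quadratic instead of triggering an unbounded recursion.
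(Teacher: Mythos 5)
Your global architecture coincides with the paper's: expand $w_gw_h$ into its diagonal and block factors, push all diagonal corrections to the front by a number of ``swap'' and ``slide'' moves bounded in terms of $n$, observe that $\normal[i]g\normal[i]h$ is already freely equal to $\normal[i]{gh}$ because $\generatedby{\kgens[i]}$ is free, and let uniqueness of the normal form (\cref{lem:norml-form-exists-unique}) force the accumulated diagonal word to be $\normal[d]{gh}$. Your slide move is the paper's \cref{lemma:multiple-swaps} (via \cref{lemma:convert-second-diagonal}) and is indeed carried out there letter by letter; your use of the spare superscript $r+1$ to pass to a modified diagonal alphabet is exactly \cref{lemma:change-diagonal-small} and is the correct point where $n\ge r+2$ enters. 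The bookkeeping in your last paragraph also matches the paper: boundedly many moves, each quadratic, with corrections growing only additively.

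The gap is inside a single swap. You assert that $[v(\kgens[i]),v'(\kgens[j])]$ can be turned into a diagonal word at cost $C\abs v\abs{v'}$ by peeling one letter at a time down to atomic identities ``deduced from $\mathcal R_2$''. But the atomic commutators $[\kgen ki,\kgen lj]$ with $k\neq l$ are not relations: they are nontrivial elements of $\commsub{\freecopy rn}$. Peeling produces $\abs v\abs{v'}$ such corrections, each conjugated by a mixed-alphabet word of length $O(\abs v+\abs{v'})$, and these corrections do not commute with the letters you must move them past --- every generator $\kgen mi=\fpgen mi\,\invfpgen mn$ has a nontrivial component in the $n$-th factor, so transposing a diagonal correction past a single letter spawns a further element of $\commsub{\freecopy rn}$. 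The compounding you flag in your final paragraph therefore already occurs within one swap, not merely in the global sequencing, and the letter-by-letter induction does not yield a quadratic bound. The paper circumvents this with a different device (\cref{lemma:merge-words,lemma:convert-first-diagonal}): after arranging $j\ge r+1$ and making $v$ balanced, it rewrites the difference $\inv v(\diagalph)v(\kgens[i])$ --- via repeated applications of the algebraic identity $\inv w(\mathcal T)\,w(\mathcal S)=w(\inv{\mathcal T}\mathcal S)$, valid at quadratic cost when each $t_k$ commutes with each $\inv t_ls_l$ --- as a word in two-letter blocks $\invkgen k\alpha\kgen k\beta$, every one of which commutes with the whole alphabet $\kgens[j]$; only then does the commutator with $v'(\kgens[j])$ collapse. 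Without this (or an equivalent) mechanism your swap estimate is unsubstantiated, and it is the heart of the proposition.
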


In other words, this tells us that triangles in the Cayley graph whose sides represent words in normal form have quadratic area with respect to their perimeter. Given a word of length $N$, we can subdivide its van Kampen diagram into such triangles as in \cref{fig:normal-form}, so that the estimate above yields a quadratic bound on the area of the whole diagram. Similar arguments already appeared in other places, including \cite{GerSho-02, carter2017stallings}; below we give a precise statement and proof.

\begin{theorem}\label{thm:normal-form}
	Let $ n \geq 4, r \geq 2$ be natural numbers with $ n \geq r+2 $. There exists a constant $C>0$ such that for any $ g \in \K $ and for any $ w \in \free\kgens $ that represents $g$, it holds that
	\[
		\Area(\inv w w_g) \leq C \cdot \abs w^2,
	\]
	where $w_g \in \free\kgens$ denotes the normal form of $g$.
\end{theorem}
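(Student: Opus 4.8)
The plan is to prove this by induction on $N = \abs w$, using a balanced binary subdivision of $w$ into two halves; this is exactly what upgrades the bound from cubic (which a naive linear telescoping would produce) to quadratic. Since the area of a word in $\free\kgens$ is unchanged by cyclic permutation, inversion, and conjugation, it is equivalent to bound $\Area(w\inv{w_g})$, and we work with this quantity. Set
\[
	A(N)\coloneq\max\set*{\Area(v\inv{v_f}) \Suchthat f\in\K,\ v\in\free\kgens \text{ represents } f,\ \abs v\le N};
\]
we will show $A(N)\le CN^2$ for a constant $C=C(n,r)$. For the base case $N\le 1$ the word $v\inv{v_f}$ is null-homotopic of length at most $4$ (using $\length{v_f}\le 3\abs f_{\kgens}\le 3$, i.e.\ the length bound on normal forms proved above), and there are only finitely many such words, so $A(1)$ is finite.

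For the inductive step, fix $w$ with $\abs w=N\ge 2$ representing $g$, and write $w=w_1w_2$ with $\abs{w_1}=\ceil{N/2}$ and $\abs{w_2}=\lfloor N/2\rfloor$. Let $h_1,h_2\in\K$ be the elements represented by $w_1,w_2$, so that $g=h_1h_2$, and let $w_{h_1},w_{h_2}$ be their normal forms. A direct manipulation in $\free\kgens$ yields the telescoping identity
\[
	w\inv{w_g}=\bigl(w_1\inv{w_{h_1}}\bigr)\cdot\bigl(w_{h_1}\,(w_2\inv{w_{h_2}})\,\inv{w_{h_1}}\bigr)\cdot\bigl(w_{h_1}w_{h_2}\inv{w_{h_1h_2}}\bigr).
\]
The first factor has area at most $A(\ceil{N/2})$ by the inductive hypothesis (as $w_1$ represents $h_1$), and the second, being a conjugate of $w_2\inv{w_{h_2}}$, likewise has area at most $A(\ceil{N/2})$. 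The third factor is precisely the word estimated in \cref{prop:multiply-normal-forms}: since $\length{w_{h_i}}\le 3\abs{h_i}_{\kgens}\le 3\abs{w_i}\le 3N$ by the length bound, its area is at most $C_0(3N)^2$, where $C_0$ is the constant of that proposition. Subadditivity of area under concatenation and invariance under conjugation therefore give
\[
	A(N)\le 2A(\ceil{N/2})+9C_0N^2+A(1).
\]

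Unrolling this recursion, the $j$-th level of the binary tree contributes at most $2^j\cdot 9C_0\ceil{N/2^j}^2=O(N^2/2^j)$ from the \cref{prop:multiply-normal-forms} terms, which sum over $j$ to $O(N^2)$, while the $O(N)$ leaves each contribute $A(1)=O(1)$, for a further $O(N)$. Hence $A(N)\le CN^2$ for a suitable $C$, and the theorem follows by passing back from $w\inv{w_g}$ to $\inv w w_g$. The hard part is not any individual estimate but the bookkeeping that guarantees exactly one ``triangle'' per node of the recursion tree is filled via \cref{prop:multiply-normal-forms}, and that the cost of that triangle is measured against the \emph{local} half-length $\ceil{N/2}$ rather than $N$, so that the geometric series $\sum_j 2^j(N/2^j)^2$ converges to $O(N^2)$ instead of the $O(N^3)$ produced by an unbalanced telescoping. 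Finally, note that proving the statement for an arbitrary $w$ representing an arbitrary $g\in\K$, rather than only for null-homotopic words, is forced by the induction, since the halves $h_1,h_2$ are in general nontrivial; specializing to $g=1$ (so $w_g$ is the empty word) then recovers the quadratic upper bound on $\Dehn\K$.
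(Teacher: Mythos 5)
Your proposal is correct and follows essentially the same approach as the paper's proof: a balanced binary subdivision $w=w_1w_2$, the inductive hypothesis applied to the two halves, and \cref{prop:multiply-normal-forms} applied to the triangle $w_{h_1}w_{h_2}\inv{w_{h_1h_2}}$. The only difference is presentational — you solve the recursion $A(N)\le 2A(\lceil N/2\rceil)+O(N^2)$ by explicitly unrolling the geometric series over the levels of the recursion tree, whereas the paper verifies directly that $CN^2$ satisfies the same recursion for a suitably chosen constant (starting the induction at $N>12$ to absorb the $+1$ from the ceiling).
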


Since the normal form of the trivial element is trivial, this implies directly \cref{main:quadratic-case}. We now prove how to recover \cref{thm:normal-form} from \cref{prop:multiply-normal-forms}.   %

\begin{proof}
	Pick a constant $C>0$ such that the statement holds for all $w$ of length at most $12$. We may also assume that $C$ is bigger than the constant $C'$ given by \cref{prop:multiply-normal-forms}.

	We claim that $\Area(\inv w w_g) \leq C \cdot \abs w^2$ holds for words of any length: we prove this by induction. To this end, let $ N > 12$ and assume that the statement holds for every $w$ with $ \abs w < N $.

	Let $ w \in \free\kgens $ representing $g$ with $ \abs w = N $, and decompose it as $ w=w_1w_2 $, so that, for $i=1,2$, the word $ w_i $ has length at most $ N/2+1 $ and represents some element $ g_i \in \K $.
	By inductive hypothesis, we can replace $w_1$ and $w_2$ by $ w_{g_1} $ and $ w_{g_2} $ using at most $ 2 \cdot  C (N/2 + 1)^2 $ relations; then, we can rewrite the product $ w_{g_1} w_{g_2} $ as $ w_{g_1g_2} = w_g $ using at most $ C (N/2+1)^2 $ relations     by \cref{prop:multiply-normal-forms}.

	Thus, we are able to fill $ \inv w w_g $ using at most
	\[
		(2C+C) \cdot \left(\frac{N^2}{4} + N+1\right) < 3C \cdot \left(\frac14 + \frac1{12}\right) \cdot N^2 = C \cdot N^2
	\]
	relations, where we used that $ N+1 < N^2/12 $ for $ N>12 $. This concludes the proof.
\end{proof}

\begin{figure}
	\centering
	\begin{tikzpicture}
		\node[anchor=south west, inner sep=0] (image) at (0,0) {\rotatebox{90}{\includegraphics[height=7cm]{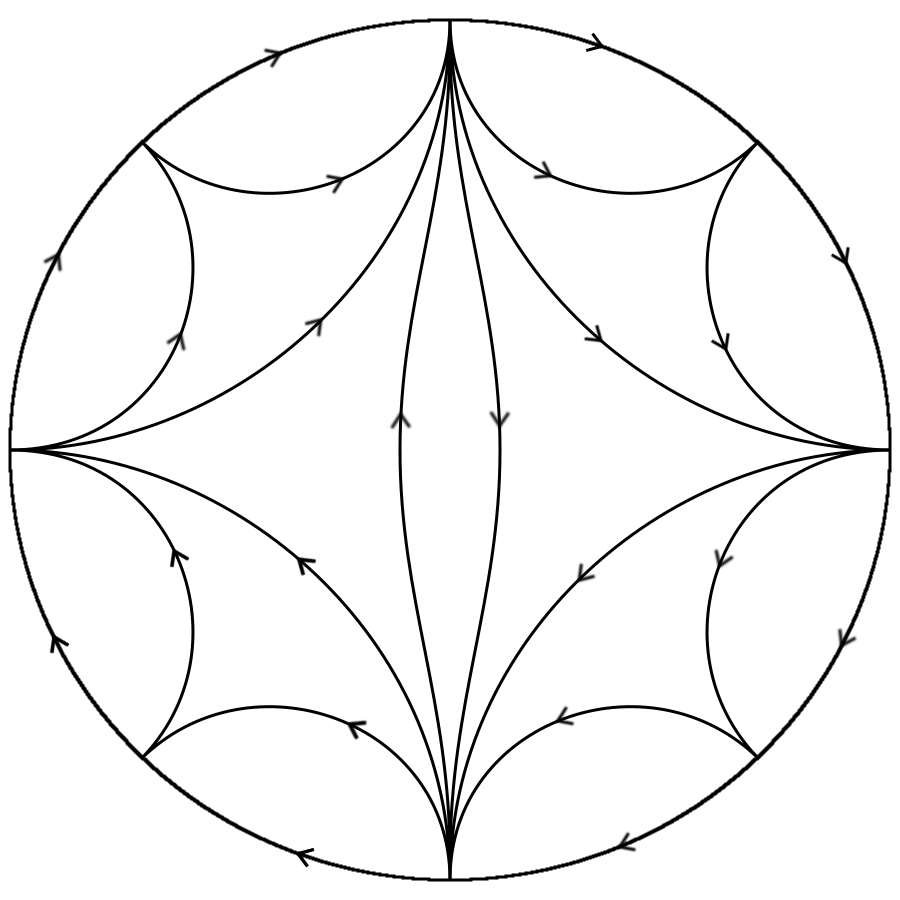}}};
		\node[above] at (3.5,3.9) {$w_{g_1g_2g_3g_4}$};
		\node[below] at (3.5,3.1) {$w_{g_5g_6g_7g_8}$};
		\node[above left] at (2.7,4.5) {$w_{g_1g_2}$};
		\node[above right] at (4.2,4.5) {$w_{g_3g_4}$};
		\node[below right] at (4.3,2.4) {$w_{g_5g_6}$};
		\node[below left] at (2.8,2.5) {$w_{g_7g_8}$};
		\node[left] at (1.5,4.5) {$w_{g_1}$};
		\node[above] at (2.3,5.5) {$w_{g_2}$};
		\node[above] at (4.7,5.5) {$w_{g_3}$};
		\node[right] at (5.5,4.5) {$w_{g_4}$};
		\node[right] at (5.5,2.5) {$w_{g_5}$};
		\node[below] at (4.7,1.4) {$w_{g_6}$};
		\node[below] at (2.3,1.4) {$w_{g_7}$};
		\node[left] at (1.5,2.5) {$w_{g_8}$};
		\node[left] at (0.5,5.0) {$g_1$};
		\node[above] at (2.0,6.6) {$g_2$};
		\node[above] at (5.0,6.6) {$g_3$};
		\node[right] at (6.5,5.0) {$g_4$};
		\node[right] at (6.5,2.0) {$g_5$};
		\node[below] at (5.0,0.4) {$g_6$};
		\node[below] at (2.0,0.4) {$g_7$};
		\node[left] at (0.5,2.0) {$g_8$};
	\end{tikzpicture}
	\caption{Given a trivial word $ w=g_1 \dots g_N $, we replace each generator $ g_i $ with its normal form $ w_{g_i} $, and then we replace iteratively the product of two normal forms with the normal form of its product. This process can be represented by the Dehn diagram above: if one proves that the area of each triangle is quadratic in its perimeter, one gets a quadratic bound of the area of the whole diagram.}
	\label{fig:normal-form}
\end{figure}

Let us delve into the proof of \cref{prop:multiply-normal-forms}. We start with some auxiliary lemmas.

\begin{lemma}\label{lemma:balanced-commute-with-diag}
	Let $ i, j \in \range r $, and $ k \in \range[r+1]{n-1} $. Then
	\[
		\left[\kgen{i}{k} \invkgen{i}{i}, \kgen{j}{j}\right] = 1.
	\]
\end{lemma}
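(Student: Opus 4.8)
The plan is to verify the identity inside the ambient group $\dirprod$, in which $\K$ sits as a subgroup; it then suffices to show that the two elements $\kgen i k \invkgen i i$ and $\kgen j j$ commute when viewed there, and I will do this by comparing their projections to the individual free factors.

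First I would rewrite both elements in the generators $\fpgens$ of the factors, using the defining formula $\xii \alpha i = \fpgen i \alpha \invfpgen i n$. A one-step cancellation gives
\[
	\kgen i k \invkgen i i = \big(\fpgen i k \invfpgen i n\big)\big(\fpgen i n \invfpgen i i\big) = \fpgen i k \invfpgen i i,
\]
so this element has support $\set{i,k}$: its projection to $\Fii k$ is $\fpgen i k$, its projection to $\Fii i$ is $\invfpgen i i$, and its projection to every other factor is trivial; here one uses $k\ne i$, which holds since $k\ge r+1>r\ge i$. Likewise $\kgen j j=\fpgen j j\invfpgen j n$ has support $\set{j,n}$. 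Next I would compare the two supports: from $i,j\le r$ and $r+1\le k\le n-1$ we get $k\ne j$, $k\ne n$ and $i\ne n$, so $\set{i,k}\cap\set{j,n}$ is empty unless $i=j$, in which case it equals $\set{i}$. If $i\ne j$ the supports are disjoint, hence the two elements commute in $\dirprod$. If $i=j$, then on the common factor $\Fii i$ the two projections are $\invfpgen i i$ and $\fpgen i i$, which commute since they are powers of a single free generator, while on every other factor one of the two projections is trivial; so the two elements commute in $\dirprod$ in this case as well. Since $\K\le\dirprod$, the commutator is trivial in $\K$.

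I do not anticipate any real obstacle: the argument is an explicit computation in a direct product of free groups, and the only care needed is the bookkeeping with the index ranges — precisely the hypotheses $i,j\in\range r$ and $k\in\range[r+1]{n-1}$ — to guarantee $k\notin\set{i,j,n}$ and $i\ne n$, which is exactly what makes the two supports meet (at most) in the single factor $\Fii i$, where the relevant projections are visibly commuting powers of $\fpgen i i$.
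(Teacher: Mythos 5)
Your proof is correct and is essentially the paper's argument: the paper likewise verifies the identity by checking the projections to the free factors of the ambient product, splitting into the cases $i\neq j$ and $i=j$ exactly as you do. The only extra observation in the paper is that for $i\neq j$ the commutator is literally one of the defining relations in $\mathcal R_2$ (so it has area one, not merely bounded area), but that refinement is not needed for the statement as written.
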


\begin{proof}
	If $i \neq j$, then it is a relation.  Otherwise, it is immediate by checking that projections are trivial in both cases $ i=j $ and $ i \neq j $.
\end{proof}

\begin{lemma}\label{lemma:change-diagonal-small}
	Let $ i \in \range r, k \in \range[r+1]{n-1} $, and let $ \diagalph' $ be the alphabet obtained from $\diagalph$
	by replacing $ \kgen{i}{i} $ with $ \kgen{i}{k} $. There exists a constant $C>0$ such that for every $ w \in \commsub{F(\Delta)} $ we have
	\[
		\Area\big(w(\diagalph) \inv w(\diagalph')\big) \leq C \cdot \abs w^2.
	\]
\end{lemma}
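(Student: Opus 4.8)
The plan is to exploit that $\kgen{i}{k}$ and $\kgen{i}{i}$ differ by right multiplication by the element $c \coloneq \kgen{i}{k}\invkgen{i}{i} \in \K$, and that $c$ commutes with every letter of $\diagalph$ at uniformly bounded area cost; a telescoping argument then slides all copies of $c$ to one end of the word, where they cancel because $w$ is $\kgen{i}{i}$-balanced.

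The first step is the commutation estimate: for every $\sigma \in \diagalph \cup \inv\diagalph$, conjugating $c^{\pm 1}$ past $\sigma$ can be done using a single relation of the presentation of \cref{sec:presentations}. Indeed, for $j \in \range r$ with $j \neq i$, the word $\bigl[\kgen{i}{k}\invkgen{i}{i}, \kgen{j}{j}\bigr]$ is, up to inversion, a relation in $\R_2$: its superscripts $j,k,i$ are pairwise distinct, because $j, i \leq r < k$. For $j = i$, the word $\bigl[\kgen{i}{k}\invkgen{i}{i}, \kgen{i}{i}\bigr]$ reduces freely to $[\kgen{i}{k}, \kgen{i}{i}]$, which is a relation in $\R_1$. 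Passing to inverses and conjugates, moving $c^{\pm 1}$ across an arbitrary word $u$ in the alphabet $\diagalph$ then costs at most $\abs u$ relations. I also record the free identities $\kgen{i}{k} = c\,\kgen{i}{i}$ and $\invkgen{i}{k} = \invkgen{i}{i}\,\inv c$.

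For the main argument, write $w = \afgen{j_1}^{\epsilon_1}\cdots\afgen{j_m}^{\epsilon_m}$ as a reduced word of length $m = \abs w$ in the abstract alphabet $\diagalph = (\afgen 1, \dots, \afgen r)$, and for $0 \leq t \leq m$ let $u_t$ be the prefix of length $t$ and $e_t$ the exponent sum of $\afgen i$ in $u_t$; since $w \in \commsub{F(\diagalph)}$ we have $e_m = 0$. By induction on $t$ I would prove that $u_t(\diagalph')$ and the word $c^{e_t}\,u_t(\diagalph)$ (where $c^{e_t}$ denotes $c\cdots c$ or $\inv c\cdots\inv c$ accordingly) represent the same element of $\K$ via a filling of area at most $\binom{t+1}{2}$. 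The step is trivial when $j_{t+1} \neq i$; when $\afgen{j_{t+1}} = \afgen i^{\pm 1}$, one replaces the new letter $\kgen{i}{k}^{\pm 1}$ by $(c\,\kgen{i}{i})^{\pm 1}$, slides the freed $c^{\pm 1}$ past $u_t(\diagalph)$ at a cost of at most $t+1$ relations, and merges it with $c^{e_t}$ to obtain $c^{e_{t+1}}$. Taking $t = m$ gives that $w(\diagalph')$ and $c^{0}\,w(\diagalph) = w(\diagalph)$ are joined by a filling of area $\binom{m+1}{2} \leq \abs w^2$, which is the claim (with $C = 1$).

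The content is essentially bookkeeping once the commutation estimate is in place; the only point requiring care is tracking the per-step sliding cost so that the total telescopes to a quadratic bound rather than something larger. The one genuinely new ingredient, beyond \cref{lemma:balanced-commute-with-diag}, is the free reduction $\bigl[\kgen{i}{k}\invkgen{i}{i}, \kgen{i}{i}\bigr] = [\kgen{i}{k}, \kgen{i}{i}] \in \R_1$, which is what makes crossing the $\kgen{i}{i}$-letters of $w$ just as cheap as crossing the other letters.
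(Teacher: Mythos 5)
Your proposal is correct and is essentially the paper's own argument: the paper likewise writes $\kgen{i}{i}=\left(\kgen{i}{i}\invkgen{i}{k}\right)\kgen{i}{k}$ and commutes the balanced factor past every letter (using \cref{lemma:balanced-commute-with-diag} together with the free reduction of $\bigl[\kgen{i}{k}\invkgen{i}{i},\kgen{i}{i}\bigr]$ to the $\R_1$-relation $[\kgen{i}{k},\kgen{i}{i}]$), with cancellation at the end coming from $\kgen{i}{i}$-balancedness. Your prefix induction just makes the telescoping cost explicit; no gap.
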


\begin{proof}
	Replace in $ w(\diagalph) $ every $ \kgen{i}{i} $ with $  \left(\kgen{i}{i} \invkgen{i}{k}\right) \kgen{i}{k}$. Since $ \kgen{i}{i} \invkgen{i}{k} $ commutes with every letter of $ \diagalph $
	by \cref{lemma:balanced-commute-with-diag}, and it also commutes with $ \kgen ik $, we can commute every $ (\kgen{i}{i} \invkgen{i}{k}) $ to the left. Since $w(\diagalph)$ is $ \kgen ii $-balanced, the resulting word is freely trivial.
\end{proof}

\begin{lemma}\label{lemma:change-diagonal}
	Let $ \sigma \colon \range{r} \to \range{n-1} $ be injective, and denote by $ \diagalph' $ the alphabet $ (\kgen{1}{\sigma(1)}, \dots, \kgen{r}{\sigma(r)}) $. There exists a constant $C>0$ such that for every $ w \in \commsub{F(\Delta)} $ we have
	\[
		\Area\left(w(\diagalph) \inv w(\diagalph')\right) \leq C \cdot \abs w^2.
	\]
\end{lemma}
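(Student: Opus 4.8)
The plan is to realise the passage from the alphabet $\diaggens$ to $\diagalph'$ as a composition of boundedly many (in terms of $n$ and $r$) \emph{elementary} moves, each of which replaces a single superscript by an index that is currently unused, and to apply a mild generalisation of \cref{lemma:change-diagonal-small} to each of them. The point is that, since $n \ge r+2$, at every stage of this process the $r$ superscripts in play occupy only $r \le n-2$ of the $n-1$ available indices, so there is always a free index to route through; this is exactly where the hypothesis on $n$ and $r$ is used.

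First I would set up the elementary step. For an injective map $\tau\colon \range r \to \range{n-1}$, write $\diaggens^\tau \coloneq (\kgen 1{\tau(1)},\dots,\kgen r{\tau(r)})$, so that $\diaggens^{\mathrm{id}}=\diaggens$. For $w\in\commsub{\free\diaggens}$, a projection computation shows that $w(\diaggens^\tau)$ is trivial on every factor except the $n$-th, on which it equals $w(\invfpgen 1n,\dots,\invfpgen rn)$ independently of $\tau$: on the $n$-th factor every generator of $\diaggens^\tau$ projects to $\invfpgen in$, while on any other factor at most one generator survives, and it does so with total exponent $0$ since $w$ is balanced in it. In particular $w(\diaggens^\tau)\,\overline{w(\diaggens^{\tau'})}$ is trivial in $\K$ for any two injective $\tau,\tau'$, so the areas below are finite. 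Now fix $i\in\range r$ and an index $k\notin\tau(\range r)$, and let $\tau'$ agree with $\tau$ except that $\tau'(i)=k$ (still injective). Then $\kgen i{\tau(i)}\invkgen ik$ commutes with every letter of $\diaggens^\tau$ and with $\kgen ik$: for a letter $\kgen j{\tau(j)}$ with $j\neq i$ this is, up to inversion, a relation of $\R_2$, because the superscripts $k,\tau(i),\tau(j)$ are pairwise distinct and the subscripts $i\neq j$ differ; for $j=i$ and for $\kgen ik$ it follows from a direct projection computation, exactly as in \cref{lemma:balanced-commute-with-diag}. With this in hand the proof of \cref{lemma:change-diagonal-small} applies with only notational changes: replacing each $\kgen i{\tau(i)}$ in $w(\diaggens^\tau)$ by $(\kgen i{\tau(i)}\invkgen ik)\,\kgen ik$ and sliding all the factors $\kgen i{\tau(i)}\invkgen ik$ to the left, where they cancel since $w$ is balanced in the $i$-th generator, yields $w(\diaggens^{\tau'})$ at the cost of $O(\abs w^2)$ relations, with implied constant depending only on $n,r$. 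As there are finitely many triples $(\tau,i,k)$, this gives a single constant $C_0=C_0(n,r)$ with $\Area\bigl(w(\diaggens^\tau)\,\overline{w(\diaggens^{\tau'})}\bigr)\le C_0\abs w^2$ for every elementary move and every $w\in\commsub{\free\diaggens}$.

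Next I would route $\sigma$ through unused indices, producing injective maps $\mathrm{id}=\tau_0,\tau_1,\dots,\tau_m=\sigma\colon\range r\to\range{n-1}$ with $m\le 2r$, consecutive ones differing by an elementary move as above. I build them greedily, processing $i=1,\dots,r$ and keeping the invariant that $\tau(l)=\sigma(l)$ for all $l<i$: if $\tau(i)=\sigma(i)$, skip; if $\sigma(i)\notin\tau(\range r)$, perform the move $\tau(i)\mapsto\sigma(i)$; otherwise $\sigma(i)=\tau(j)$ for some $j$, which must satisfy $j>i$ (if $j<i$ then $\sigma(j)=\tau(j)=\sigma(i)$ contradicts injectivity of $\sigma$), and I first move $\tau(j)$ to some $b\notin\tau(\range r)$ — which exists because $\abs{\tau(\range r)}=r\le n-2$, and automatically satisfies $b\neq\sigma(i)$ — thereby freeing $\sigma(i)$, and then perform $\tau(i)\mapsto\sigma(i)$. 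This uses at most two moves per index, so $m\le 2r$ and $\tau_m=\sigma$. Finally, since $\Area$ is subadditive and $\Area(\overline u)=\Area(u)$, telescoping along the $\tau_\ell$ gives
\[
	\Area\bigl(w(\diaggens)\,\overline{w(\diagalph')}\bigr)\;\le\;\sum_{\ell=0}^{m-1}\Area\bigl(w(\diaggens^{\tau_\ell})\,\overline{w(\diaggens^{\tau_{\ell+1}})}\bigr)\;\le\; m\,C_0\,\abs w^2\;\le\;2r\,C_0\,\abs w^2,
\]
so the lemma holds with $C=2r\,C_0$.

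The only real content here is organisational: isolating the elementary move, verifying that the commutation relations of \cref{lemma:balanced-commute-with-diag} survive the passage from $\diaggens$ to $\diaggens^\tau$ (pure projection bookkeeping), and arranging the greedy routing so that every intermediate map stays injective with a spare index available. I do not expect a genuine obstacle beyond this; in particular, no new relations are needed.
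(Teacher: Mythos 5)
Your proof is correct and takes essentially the same route as the paper, whose entire proof reads ``It follows by iterating \cref{lemma:change-diagonal-small}.'' You have simply supplied the details that the paper elides: the generalization of the elementary move to an arbitrary intermediate injective alphabet $\diagalph^\tau$ (checking that the needed commutators are still instances of $\R_2$, resp.\ have bounded area) and the greedy routing through unused superscripts, which is exactly where $n\ge r+2$ is used.
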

\begin{proof}
	It follows by iterating \cref{lemma:change-diagonal-small}.
\end{proof}

\begin{lemma}\label{lemma:convert-second-diagonal}
	There exists a constant $ C>0 $ with the following property. For every $v, w \in \free \Delta $ and $ i \in \range{n-1} $ such that either $v \in \commsub{\free \Delta} $ or $i\in \range[r+1]{n-1}$ we have
	\[
		\Area\left([w(\kalph{i}), v(\diagalph)][w(\diagalph), v(\diagalph)]^{-1}\right) \leq C \cdot \max\set{\abs w^2, \abs v^2}.
	\]
\end{lemma}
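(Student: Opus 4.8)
The plan is to compare the two commutators term by term, replacing $w(\kalph i)$ with $w(\diagalph)$ by exploiting that the generators $\kgen j i$ (for $j\in\range r$) and $\kgen j j$ differ by the "correction" elements $\kgen j i \invkgen j j$, which are central in the relevant copies by \cref{lemma:balanced-commute-with-diag}. Concretely, write $w(\kalph i) = w\big((\kgen 1 i \invkgen 1 1)\kgen 1 1, \dots, (\kgen r i \invkgen r 1)\kgen r 1\big)$; I want to move all the correction factors $c_j := \kgen j i \invkgen j j$ out to one side and then see that they commute past $v(\diagalph)$ as well, so the whole conjugation by $v(\diagalph)$ in the commutator $[w(\kalph i), v(\diagalph)]$ has no effect on the corrections and they cancel in the commutator bracket.

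First I would reduce to the case $i = n-1$ (or any fixed index distinct from $1,\dots,r$), which is legitimate when $i \in \range[r+1]{n-1}$ by \cref{lemma:change-diagonal} applied to $w$, at quadratic cost; in the case $v\in\commsub{\free\Delta}$ with $i$ arbitrary I instead keep $i$ general but only use that the corrections are $\kgen j j$-balanced commutators and hence commute with $v(\diagalph)$ — here is where the hypothesis "$v\in\commsub{\free\Delta}$ or $i\in\range[r+1]{n-1}$" is used. Next, starting from $w(\kalph i)$, I insert $c_j \invkgen j i \kgen j i$ — no, more carefully: rewrite each letter $\kgen j i$ occurring in $w$ as $c_j \kgen j j$ where $c_j = \kgen j i \invkgen j j$. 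Using \cref{lemma:balanced-commute-with-diag}, each $c_j$ commutes with every letter of $\diagalph$, and also with every $c_{j'}$ and with every $\kgen {j'} i$ (the latter since projections are supported on disjoint factors or a common factor where they are powers of the same generator — one checks this directly). Therefore, collecting the corrections, we can write $w(\kalph i) = \big(\prod_j c_j^{e_j}\big)\cdot w(\diagalph)$ in $\free\kgens$ up to a word of area $\leq C\abs w^2$, where $e_j$ is the exponent sum of $\kgen j i$ in $w$; this is a standard "commute all the central-ish letters to the front" argument whose area is quadratic in $\abs w$ because we move at most $\abs w$ letters past at most $\abs w$ letters each.

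With $u := \prod_j c_j^{e_j}$ we then have, up to quadratic area, $[w(\kalph i), v(\diagalph)] = [u\, w(\diagalph),\, v(\diagalph)] = u\, w(\diagalph)\, v(\diagalph)\, \inv w(\diagalph)\, \inv u\, \inv v(\diagalph)$. Since $u$ commutes with $v(\diagalph)$ (each $c_j$ does, by \cref{lemma:balanced-commute-with-diag} when $i\in\range[r+1]{n-1}$, and because $c_j$ is a $\kgen jj$-balanced commutator — hence projects trivially to all factors $1,\dots,r$ — which makes it commute with the diagonal letters when $v$ itself is a commutator... actually it commutes with individual diagonal letters whenever $i\neq$ that index, and with $\kgen j j$ it commutes outright), we may slide $\inv u$ to the left past $\inv v(\diagalph)$ — wait, I need $u$ past $v(\diagalph)$, which is exactly what I have — obtaining $u\, [w(\diagalph), v(\diagalph)]\, \inv u$. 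Finally, I claim $u$ is central enough to be absorbed: $[w(\diagalph),v(\diagalph)] \in \commsub{\free\Delta}$ projects trivially to factors $r+1,\dots,n-1$ and into $\commsub{\freecopy r i}$ on factor $i$ for each $i\le r$, while $u$ projects trivially on those same factors $1,\dots,r$ and is supported only on factors $i$ and $n$; so $u$ commutes with $[w(\diagalph),v(\diagalph)]$ factor by factor. Hence $u[w(\diagalph),v(\diagalph)]\inv u = [w(\diagalph),v(\diagalph)]$, and the target word $[w(\kalph i),v(\diagalph)]\,[w(\diagalph),v(\diagalph)]^{-1}$ has area $\leq C\max\{\abs w^2,\abs v^2\}$.

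The main obstacle I anticipate is the careful bookkeeping of which pairs of elements genuinely commute in $\K$ (as opposed to merely commuting "up to relations we are allowed to use"): the relations $\R_1,\R_2$ only give us commutativity of specific pairs, so every commutation I invoke must either be a relation, a consequence of \cref{lemma:balanced-commute-with-diag} (free of cost, since that lemma's proof shows the commutator is freely trivial or a single relation), or a genuine equality in $\K$ verified by checking projections — and in the last case I must still confirm it can be realized with quadratically many relations, which is where invoking \cref{lem:alphabet-X-generate-free-group} to say the relevant subgroup is \emph{free} (so "equal in $\K$" forces "freely equal", i.e.\ zero relations) does the work. Keeping straight the distinction between the arbitrary-$i$, $v$-a-commutator case and the $i\ge r+1$ case is the other delicate point; I would handle them in parallel, noting that in both situations the corrections $c_j$ commute with $v(\diagalph)$, for different reasons.
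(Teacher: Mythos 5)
Your treatment of the case $i\in\range[r+1]{n-1}$ is sound and is essentially the paper's own argument run ``in parallel'' rather than sequentially: the paper peels one letter $\kgen{k}{i}$ at a time off the inside of the conjugation $w(\kalph i)\,v(\diagalph)\,\inv w(\kalph i)$, replacing it by $\kgen kk$ at linear cost via \cref{lemma:balanced-commute-with-diag}, whereas you collect all corrections $c_j=\kgen ji\invkgen jj$ to the front at once; both give $O(\abs w\cdot\max(\abs w,\abs v))$. Two small inaccuracies there: the $c_j$ do \emph{not} commute with each other, nor with $\kgen{j'}{i}$ for $j'\neq j$ (project to the $i$-th factor, where they read $\fpgen ji$ and $\fpgen{j'}{i}$), so the collected prefix is the word $w(c_1,\dots,c_r)$ rather than $\prod_j c_j^{e_j}$; this does not affect the rest of your argument. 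Also, the preliminary reduction to $i=n-1$ is unnecessary (the argument works verbatim for any $i>r$) and is not what \cref{lemma:change-diagonal} provides, since that lemma changes the diagonal alphabet and requires the word to lie in $\commsub{\free\Delta}$.

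The genuine gap is in the case $i\in\range{r}$ with $v\in\commsub{\free\Delta}$, where you propose to ``keep $i$ general''. There the very first step fails: $c_j=\kgen ji\invkgen jj$ does not commute with the diagonal letter $\kgen ii$ --- its projection to the $i$-th factor is $\fpgen ji$, which does not commute with $\fpgen ii$ in $\freecopy ri$, and no relation in $\R_1\cup\R_2$ covers this pair (one would need $\alpha,\beta,\gamma=i,i,j$, violating pairwise distinctness). So the corrections cannot be collected to the front as soon as $w$ contains the $i$-th generator. Moreover, even granting the collection, your justification that $u$ commutes with $v(\diagalph)$ is an equality in \K{} verified on projections, and your proposed mechanism for converting such equalities into area bounds --- \cref{lem:alphabet-X-generate-free-group} --- does not apply: $u\,v(\diagalph)\,\inv u\,\inv v(\diagalph)$ is a word in a mixed alphabet contained in none of the free subgroups of that lemma, so ``trivial in \K{}'' does not force ``freely trivial'', and bounding its area is essentially the statement being proved. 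The hypothesis $v\in\commsub{\free\Delta}$ is needed for a different purpose: it lets one change the diagonal alphabet under $v$ via \cref{lemma:change-diagonal} at quadratic cost, which, combined with the symmetry of the presentation under permuting the first $n-1$ factors, reduces the case $i\le r$ to the case $i>r$. That reduction, not a direct commutation of the corrections past $v(\diagalph)$, is what makes the second case work.
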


\begin{proof}
	First assume that $i\in \range[r+1]{n-1}$, say $i=n-1$. Then after simplifying, the statement is equivalent to computing the area of
	\[
		w(\kalph{n-1}) v(\diagalph) \inv w(\kalph{n-1}) w(\diagalph) \inv v(\diagalph) \inv w(\diagalph).
	\]%
	Assume that $ w(\kgens[n-1]) $ ends with the letter $ \kgen{k}{n-1} $. Note that we can replace $ \kgen{k}{n-1} v(\diagalph) \invkgen{k}{n-1} $ with $ \kgen{k}{k} v(\diagalph) \invkgen{k}{k} \eqcolon v'(\diagalph)$ using a linear number of relations, since $ \kgen{k}{n-1} \invkgen{k}{k} $ commutes with every letter of $ \diagalph $, and by iterating we conclude.%

	If $i\le r$, since $v\in \commsub{\free\Delta}$, after permuting factors and using \cref{lemma:change-diagonal} we may assume $ i = n-1 > r $, reducing to previous case. This completes the proof.
\end{proof}

Now we note a purely algebraic lemma.

\begin{lemma}\label{lemma:merge-words}
	Let $G$ be a group with a given finite presentation, and let $ s_i, t_i \in G $ for $ i \in \range r $. Let $ \mathcal S = (s_1, \dots, s_r), \mathcal T = (t_1, \dots, t_r) $, and denote by $ \mathcal{\inv T S} $ the alphabet $ (\inv t_1 s_1, \dots, \inv t_r s_r) $. Suppose that $ [t_i, \inv t_j s_j ]=1 $ for all $ i,j \in \range r $.

	There exists a constant $C>0$ such that for every $ w \in \free{\mathcal S} $ the element
	\[
		\inv w(\mathcal T) \cdot w(\mathcal S) \cdot  \inv w(\mathcal{\inv TS}) \in \free{\mathcal S \cup \mathcal T}
	\]
	represents the trivial element of $G$ and has area bounded above by $ C \cdot \abs w^2 $.
\end{lemma}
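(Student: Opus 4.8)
The plan is to prove the identity by induction on the word length $\abs{w}$, peeling off one letter at a time from the right and tracking how the correction term grows. First I would check that the element represents the trivial element of $G$: this is the purely algebraic statement that, setting $s_i = t_i \cdot (\inv t_i s_i)$, we have $w(\mathcal S) = w\big(t_1(\inv t_1 s_1), \dots, t_r(\inv t_r s_r)\big)$, and this equals $w(\mathcal T) \cdot w(\mathcal{\inv TS})$ precisely because each $\inv t_j s_j$ commutes with every $t_i$ by hypothesis — so all the $t_i$-factors can be collected to the left without any reordering of the $\inv t_j s_j$-factors, which then assemble into $w(\mathcal{\inv TS})$. This gives triviality in the free group $\free{\mathcal S \cup \mathcal T}$ modulo the relations $[t_i, \inv t_j s_j]$; if these relations are among the defining relators of $G$ we are done, and if not they hold in $G$ and hence have some fixed finite area, contributing only a constant (times $\abs w$, for the number of times they are invoked) which is absorbed into $C$.

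For the quadratic area bound, I would argue as follows. Write $w = w' s_k^{\pm 1}$ with $\abs{w'} = \abs w - 1$. The inductive step rewrites $\inv w(\mathcal T) w(\mathcal S) \inv w(\mathcal{\inv TS})$ by first using the inductive hypothesis on the $w'$-part (cost $\le C\abs{w'}^2$), reducing to a word of the shape $\inv t_k^{\pm 1} \cdot [\text{already-merged } w'\text{-block}] \cdot s_k^{\pm 1} \cdot (\inv t_k s_k)^{\mp 1} \cdot (\text{tail})$, and then commuting the single letter $t_k^{\pm 1}$ (equivalently $\inv t_k s_k$) past the block $w'(\mathcal{\inv TS})$, which has length $\le \abs{w'}$ and consists entirely of letters $\inv t_j s_j$ — each such commutation is an application of a relation $[t_i, \inv t_j s_j] = 1$, so it costs at most $\abs{w'} \le \abs w$ relations. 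This yields the recursion $A(\abs w) \le A(\abs w - 1) + O(\abs w)$, whose solution is $O(\abs w^2)$, giving the claimed constant $C$.

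The main obstacle I anticipate is purely bookkeeping: one must be careful about the precise order in which letters appear after partial merging, so that at each stage the letter being commuted really does only meet letters of the form $\inv t_j s_j$ (and not stray $s_j$'s or $t_j$'s that would not commute with it). Keeping an explicit invariant on the intermediate words — ``the prefix already processed is in merged form $\inv{w''}(\mathcal T)\, (\text{stuff in } \mathcal{\inv TS})$'' — makes this transparent. A minor subtlety is the treatment of inverse letters $s_k^{-1}$: here one uses $s_k^{-1} = (\inv t_k s_k)^{-1} t_k^{-1}$ and commutes $(\inv t_k s_k)^{-1}$ to the right, which works identically since the commutation hypothesis is symmetric in the sense that $[t_i, \inv t_j s_j] = 1$ also gives $[t_i, (\inv t_j s_j)^{-1}] = 1$. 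No new ideas are needed beyond these; the lemma is genuinely elementary and its role is to be a clean, reusable ``merging'' black box for the word manipulations in the proof of \cref{prop:multiply-normal-forms}.
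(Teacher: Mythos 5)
Your proposal is correct and follows essentially the same route as the paper's proof: peel the last letter $s_j^{\epsilon}$ off $w(\mathcal S)$, cancel it against $(\inv t_j s_j)^{-\epsilon}$ (commuting with $t_j$ when $\epsilon=-1$), commute the resulting single letter $t_j^{\epsilon}$ past the block $\inv w'(\mathcal{\inv TS})$ at linear cost using the relations $[t_i,\inv t_j s_j]$, and iterate to get the recursion $A(\abs w)\le A(\abs w -1)+O(\abs w)$. Your separate verification of triviality and your handling of the inverse-letter case match the paper's treatment, so no gaps remain.
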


\begin{proof}
We may assume, without loss of generality, that the elements in $\mathcal S \cup \mathcal T$ are generators of the given presentation, and that the commutators $[t_i, \bar t_j s_j]$ are relations.

	Let $ j \in \range r $ and $ \epsilon \in \set{\pm 1} $ such that $ w(\mathcal S) = w'(\mathcal S) s_j^\epsilon $. Then we can rewrite the expression as
	\[
		\inv t_j^\epsilon \inv w'(\mathcal T) \cdot w'(\mathcal S) s_j^\epsilon \cdot  (\inv s_j t_j)^\epsilon  \inv w'(\mathcal{\inv TS}).
	\]
	We can then simplify the $ s_j $ (possibly after commuting it with $ t_j $, if $ \epsilon = -1 $) and commute $ t_j $ to the right with $ \abs{w'} $ relations, since $t_j$ commutes with every letter of $ w'(\mathcal{\inv TS}) $. So we get
	\[
		\inv t_j^\epsilon \left(\inv w'(\mathcal T) \cdot w'(\mathcal S) \inv w'(\mathcal{\inv TS})\right)t_j^\epsilon ,
	\]
	and by iterating the procedure we conclude.
\end{proof}

\begin{lemma}\label{lemma:convert-first-diagonal}
	Let $ i \in \range {r}$ and $ j \in \range[r+1]{n-1}$. There exists a constant $C>0$ such that, for every $v, w \in \free\Delta$, we have
	\[
		\Area\left(\left[w(\kalph i), v(\kalph j)\right]\left[w(\diagalph), v(\kalph j)\right]^{-1}\right) \leq C \cdot \max\set*{\abs w^2, \abs v^2}.
	\]
\end{lemma}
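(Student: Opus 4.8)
The plan is to pass, via a commutator identity, to a single commutator of elements having disjoint supports in the ambient product $\dirprod$, and then to bound its area using the explicit relations $\R_1,\R_2$. In $\free\kgens$ one has the identity (valid in any group)
\[
	[w(\kalph i),v(\kalph j)]\,[w(\diagalph),v(\kalph j)]^{-1}
	\;=\; w(\kalph i)\,\big[\,v(\kalph j),\;\overline{w(\kalph i)}\,w(\diagalph)\,\big]\,\overline{w(\kalph i)},
\]
so, since area is invariant under conjugation, it is enough to bound $\Area\big([\,v(\kalph j),Z\,]\big)$, where $Z:=\overline{w(\kalph i)}\,w(\diagalph)$. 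The key structural remark is that $\prii{\alpha}(Z)=1$ for $\alpha=n$ — because $\prii{n}(\kgen{k}{i})=\prii{n}(\kgen{k}{k})=\invfpgen{k}{n}$, so $w(\kalph i)$ and $w(\diagalph)$ have the same projection to the $n$-th factor — and for $r+1\le\alpha\le n-1$, since no letter of $\kalph i$ or of $\Delta$ involves such a factor. As $v(\kalph j)$ is supported only on the factors $j$ and $n$ and $j\ge r+1$, the elements $Z$ and $v(\kalph j)$ commute in $\K$; only the area of this commutator is at stake.

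The heart of the argument is the following claim: $Z$ equals, in $\K$, a product $\zeta_1\cdots\zeta_L$ with $L=O(\abs w)$, where each $\zeta_s$ is a word of bounded length supported only on factors outside $\{j,n\}$, and this identity can be realised in $\free\kgens$ with area $O(\abs w^2)$. Granting this, each $\zeta_s$ commutes with each generator $\kgen{\ell}{j}$ occurring in $v(\kalph j)$ with uniformly bounded area — the commutator is either a relation of $\R_2$, up to a bounded number of $\R_1$-moves, when the subscripts differ, or else a trivial word of bounded length in $\K$ — so the product $\zeta_1\cdots\zeta_L$ can be commuted past $v(\kalph j)$ using $O(L\,\abs v)=O(\abs w\,\abs v)$ relations. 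Hence $\Area([v(\kalph j),Z])=O(\abs w^2+\abs w\,\abs v)=O(\max\{\abs w^2,\abs v^2\})$, which is the asserted bound.

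To prove the claim, write $w=\afgen{k_1}^{\epsilon_1}\cdots\afgen{k_m}^{\epsilon_m}$ and peel the letters off from the right, using the free rewriting $\kgen{k_t}{i}=(\kgen{k_t}{i}\invkgen{k_t}{k_t})\,\kgen{k_t}{k_t}$: the correction $\kgen{k_t}{i}\invkgen{k_t}{k_t}$ is supported on the two \emph{low} factors $i$ and $k_t$, hence — by $\R_1$ and disjointness of supports — commutes with $\kgen{k_t}{k_t}$ and with every diagonal letter at bounded cost, and iterating turns $Z$ into a product of the bounded pieces $(\kgen{k_t}{i})^{-\epsilon_t}(\kgen{k_t}{k_t})^{\epsilon_t}$ at quadratic total cost. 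The one difficulty — and the main obstacle — is caused by the letters $\afgen i$ occurring in $w$: these produce $\kgen{i}{i}$, which lives on the low factor $i$ and therefore clashes there with the later corrections. This is precisely where the hypothesis $n\ge r+2$ enters: before peeling, one moves the offending generators off factor $i$ by routing them through the spare index $r+1$ (available and disjoint from $\Delta$ since $r+1\le n-1$), the required changes of alphabet being supplied by \cref{lemma:balanced-commute-with-diag,lemma:change-diagonal}; one must then take extra care in the extremal case $n=r+2$, where the only spare index coincides with $j$, to keep the resulting pieces disjoint from the support of $v(\kalph j)$. With this arranged, the peeling goes through and produces the decomposition above, completing the proof.
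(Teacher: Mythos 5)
Your skeleton coincides with the paper's: reduce via a free commutator identity to $[v(\kalph j),Z]$ with $Z=\overline{w(\kalph i)}\,w(\diagalph)$, rewrite $Z$ at cost $O(\abs w^2)$ as a product of $O(\abs w)$ short balanced pieces supported away from the factors $j$ and $n$, then commute these past $v(\kalph j)$ letter by letter for a further $O(\abs w\cdot\abs v)$. Your peeling is a hand-rolled instance of \cref{lemma:merge-words}, and you correctly isolate the obstruction: a piece $\invkgen{k}{i}\kgen{k}{k}$ with $k\neq i$ carries the letter $\invfpgen{k}{i}$ on the $i$-th free factor and therefore cannot cross the diagonal letter $\kgen{i}{i}$ created by an occurrence of $\afgen{i}$ in $w$.

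The gap is that your resolution of this obstruction fails in the one case that cannot be avoided. Rerouting $\kgen{i}{i}$ to $\kgen{i}{r+1}$ produces pieces $\invkgen{i}{i}\kgen{i}{r+1}$ supported on the factors $i$ and $r+1$; when $n=r+2$ the hypothesis forces $j=r+1=n-1$, and then $\bigl[\invkgen{i}{i}\kgen{i}{r+1},\kgen{\ell}{j}\bigr]$ projects to $\bigl[\fpgen{i}{r+1},\fpgen{\ell}{r+1}\bigr]\neq 1$ in the $(r+1)$-st free factor for every $\ell\neq i$, so these pieces do not even commute with $v(\kalph j)$ in $\K$ and your final step collapses. ``Take extra care'' is not a substitute for an argument here: $n=r+2$ is precisely the boundary case needed for \cref{main:quadratic-case}, and it is where the paper's proof spends essentially all of its effort, applying \cref{lemma:merge-words} three times with alphabets engineered so that the resulting pieces end up supported on low factors only. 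Two further points: your appeal to \cref{lemma:change-diagonal} is not licensed as stated, since $w$ is an arbitrary element of $\free\Delta$ rather than of $\commsub{\free\Delta}$ --- one must first insert $(\invkgen{i}{i})^{k}(\kgen{i}{i})^{k}$ between the two halves of $Z$ so as to replace $w$ by the $\afgen{i}$-balanced word $\afgen{i}^{k}w$, which is the paper's opening move. With that repaired, your argument does go through for $n\ge r+3$, but not, as it stands, for $n=r+2$.
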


\begin{proof}
	We may assume without loss of generality $ i=1$ and $ j=n-1 $. After simplifying $ v(\kalph j) $, the expression becomes a conjugate of
	\begin{equation}\label{eq:deadly-commutator}
		\left[\inv w(\diagalph) w(\kalph {1}), v(\kalph {n-1})\right]. \tag{$*$}
	\end{equation}
	Note that, for every $ k \in \ZZ $, this expression is freely equivalent to
	\[
		\left[\inv w(\diagalph) (\invkgen 11)^k (\kgen 11)^k w(\kalph {1}), v(\kalph {n-1})\right],
	\]
	so we may assume that both $ w(\diagalph) $ and $ w(\kgens[1]) $ are $ \kgen 11 $-balanced.

	Consider now the expression $\inv w(\diagalph) w(\kalph 1) $ on the left side of the commutator. By using \cref{lemma:change-diagonal}, this can be rewritten as
	\[
		\inv w\left(\kgen{1}{n-1}, \kgen 22, \kgen 33, \dots, \kgen rr\right)w\left(\kalph 1\right).
	\]
	Then, by using \cref{lemma:merge-words} with $ \mathcal T = (\kgen{1}{n-1}, \kgen 22, \kgen 33, \dots, \kgen rr)$,  $\mathcal S = \kalph 1 $, we obtain
	\[
		w\left(\invkgen{1}{n-1}\kgen11, \invkgen22\kgen21, \invkgen 33 \kgen 31, \dots, \invkgen rr\kgen r1\right).
	\]
	Since $ \kgen 22 $ commutes with $ \invkgen kk \kgen k1$,  for all $ k \in \range r $, and with $\invkgen{1}{n-1}\kgen11$ (it can be seen by checking the projection for $ k=1, k=2, k\geq 3 $),
	we may commute every occurrence of $ \kgen 22 $ to the left, obtaining
	\[
		\left(\kgen 22\right)^{-m} \cdot 	w\left(\invkgen{1}{n-1}\kgen11, \kgen21, \invkgen33 \kgen31, \dots, \invkgen rr\kgen r1\right) ,
	\]
	where $m$ is the number of times the second generator appears in $w$ (counted with sign).

	Now we apply again \cref{lemma:merge-words} with
	\begin{align*}
		\mathcal S & = \left(\invkgen 12 \kgen 11, \invkgen 22 \kgen 21, \invkgen 33 \kgen 31, \dots, \invkgen rr \kgen r1\right) \\
		\mathcal T & = \left(\invkgen 12 \kgen {1}{n-1}, \invkgen 22, 1, \dots, 1\right)
	\end{align*}
	and we get
	\[
		\left(\kgen22\right)^{-m} \cdot \inv w\left(\invkgen 12 \kgen {1}{n-1}, \invkgen 22, 1, \dots, 1\right) \cdot w\left(\invkgen 12 \kgen 11, \invkgen 22 \kgen 21, \invkgen 33 \kgen 31, \dots, \invkgen rr \kgen r1\right).
	\]

	Apply \cref{lemma:merge-words} another time to the word $ \inv w $ with $ \mathcal S=(\invkgen 12 \kgen 11, \invkgen 22, 1, \dots, 1) $ and $ \mathcal T = (\invkgen 1{n-1} \kgen 11, 1, \dots, 1) $ obtaining%
	\begin{multline*}
		\left(\kgen22\right)^{-m} \cdot  w\left(\invkgen 1{n-1} \kgen 1{1}, 1, \dots, 1\right) \cdot \inv w\left(\invkgen 12 \kgen 11, \invkgen 22, 1, \dots, 1\right)\cdot \\
		\qquad \cdot w\left(\invkgen 12 \kgen 11, \invkgen 22 \kgen 21, \invkgen 33 \kgen 31, \dots, \invkgen rr \kgen r1\right)
	\end{multline*}
	where we also used the relations $ [\kgen 11, \kgen 1{n-1} \invkgen 12]$ and $ [\kgen 12, \kgen 1{n-1}] $ at most $ \abs w $ times each.

	Now $w(\invkgen 1{n-1} \kgen 1{1}, 1, \dots, 1)$ is freely trivial because $w$ is balanced in the first generator, and since $ \kgen 23 $ commutes with $ \invkgen 12 \kgen 11 $ we can rearrange as
	\begin{equation*}
		\left(\invkgen22 \kgen 23\right)^{m} \inv w\left(\invkgen 12 \kgen 11, \invkgen 22 \kgen 23, 1, \dots, 1\right)
		w\left(\invkgen 12 \kgen 11, \invkgen 22 \kgen 21, \invkgen 33 \kgen 31, \dots, \invkgen rr \kgen r1\right).
	\end{equation*}

	All these manipulations required a number of relations that is quadratic in the length of $w$; in the end we have obtained an expression for $\inv w(\diagalph) w(\kalph 1) $ in which all the pairs of letters commute with $ \kgen k{n-1} $, for all $ k \in \range r $. So the commutator \cref{eq:deadly-commutator} can be filled with a quadratic number of relations in the length of $w$ and $v$, as required.
\end{proof}

\begin{lemma}\label{lemma:diagonal-combined}
	There is a constant $C>0$ such that for all $i,j\in \range{n-1}$ with $i\neq j$ and $u, v\in \free\Delta$ we have
	\[
		\Area\left([u(\kgens[i]),v(\kgens[j])]\cdot [u(\diaggens),v(\diaggens)]^{-1}\right)\le C \max(\abs{u}^2,\abs{v}^2).
	\]
\end{lemma}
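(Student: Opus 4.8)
The plan is to deduce this from \cref{lemma:convert-first-diagonal} and \cref{lemma:convert-second-diagonal}, using the identity $[a,b]^{-1}=[b,a]$ — which swaps the arguments of a commutator without changing its area — together with a case distinction according to whether each of $i,j$ lies in $\range r$ or in $\range[r+1]{n-1}$. Write $P\sim Q$ to mean $\Area(P\inv Q)\le C\max(\abs u^2,\abs v^2)$ for a suitable constant $C$; the aim is to prove $[u(\kgens[i]),v(\kgens[j])]\sim[u(\diaggens),v(\kgens[j])]\sim[u(\diaggens),v(\diaggens)]$, i.e.\ to convert first the left and then the right argument to the diagonal alphabet.

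First I would treat the case $j\in\range[r+1]{n-1}$. If also $i\in\range r$, the first relation $[u(\kgens[i]),v(\kgens[j])]\sim[u(\diaggens),v(\kgens[j])]$ is precisely \cref{lemma:convert-first-diagonal}. If instead $i\in\range[r+1]{n-1}$ (so $i\ne j$), the same conversion is actually simpler: one rewrites $\inv{u(\diaggens)}\,u(\kgens[i])$ via \cref{lemma:merge-words} with $\mathcal T=\diaggens$ and $\mathcal S=\kgens[i]$ — its hypotheses $[\kgen kk,\invkgen ll\kgen li]=1$ hold with bounded area, since $k,l,i$ are pairwise distinct when $i>r$, so these are instances of $\R_2$, the entries $k=l$ being handled by $\R_1$ — obtaining $u(\invkgen 11\kgen 1i,\dots,\invkgen rr\kgen ri)$, whose letters $\invkgen kk\kgen ki$ live on factors $\{k,i\}$, disjoint from the factors $\{j,n\}$ of the letters of $\kgens[j]$, and hence commute with all of them through $\R_1$ and $\R_2$. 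In either subcase we reach $[u(\diaggens),v(\kgens[j])]$, and the second relation $[u(\diaggens),v(\kgens[j])]\sim[u(\diaggens),v(\diaggens)]$ follows by applying \cref{lemma:convert-second-diagonal} to $[v(\kgens[j]),u(\diaggens)]$ (legitimate because $j>r$) and inverting. The case $i\in\range[r+1]{n-1}$, $j\in\range r$ reduces to the one just done by passing to $[v(\kgens[j]),u(\kgens[i])]^{-1}$, which moves the large index into the second slot.

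The remaining case $i,j\in\range r$ is the delicate one: now neither lemma applies directly, because both arguments carry a single-factor alphabet with small index, and converting such an alphabet to $\diaggens$ inside a commutator genuinely requires a large index in the other slot — the merge-word device of \cref{lemma:convert-first-diagonal} breaks down when the alphabet pushed to the front is a single-factor one, since two distinct generators on the same factor do not commute. This is exactly where $n\ge r+2$ is used: it furnishes an index $k:=r+1\in\range[r+1]{n-1}$ with $k\notin\{i,j\}$. The plan is to first establish $[u(\kgens[i]),v(\kgens[j])]\sim[u(\kgens[i]),v(\kgens[k])]$ and then invoke the case $j'=k\in\range[r+1]{n-1}$ already treated. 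For the replacement one computes that $[u(\kgens[i]),v(\kgens[j])]\,[u(\kgens[i]),v(\kgens[k])]^{-1}$ is conjugate to $[u(\kgens[i]),\inv{v(\kgens[k])}\,v(\kgens[j])]$, and that the element $\inv{v(\kgens[k])}\,v(\kgens[j])$ is supported on the factors $\{j,k\}$, disjoint from $\{i,n\}$, so that this commutator is trivial in $\K$; the content is to carry out the commutation efficiently, for which I would balance $v$, route through the diagonal using \cref{lemma:change-diagonal}, and merge via \cref{lemma:merge-words}, rewriting $\inv{v(\kgens[k])}\,v(\kgens[j])$ into a word in letters supported on $\{j,k\}$ that visibly commute with every letter of $\kgens[i]$ through $\R_1$ and $\R_2$ (using again that $i,j,k$ are pairwise distinct). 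This last rewriting — effectively, commuting $\inv{v(\kgens[k])}\,v(\kgens[j])$ past $u(\kgens[i])$ with quadratic area when $i,j\le r$ — is the step I expect to be the main obstacle.
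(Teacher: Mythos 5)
Your handling of the cases where at least one of $i,j$ lies in $\range[r+1]{n-1}$ is correct and essentially matches the paper's use of \cref{lemma:convert-first-diagonal,lemma:convert-second-diagonal}, but the case $i,j\in\range r$ --- which you yourself single out as the delicate one --- contains a genuine gap. You reduce it to showing that $[u(\kgens[i]),\inv{v(\kgens[k])}\,v(\kgens[j])]$ has quadratically bounded area for $k=r+1$, and propose to treat $\inv{v(\kgens[k])}\,v(\kgens[j])$ by \cref{lemma:merge-words} with $\mathcal T=\kgens[k]$, $\mathcal S=\kgens[j]$. The hypothesis of that lemma fails here: for $m\neq l$ the commutator $[\kgen mk,\invkgen lk\kgen lj]$ is not even trivial in $\K$, since both entries have nontrivial projection to the $k$-th factor (namely $\fpgen mk$ and $\invfpgen lk$, which do not commute in the free group $\freecopy rk$). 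So $\inv{v(\kgens[k])}\,v(\kgens[j])$ cannot be merged into a single-alphabet word this way, and rewriting it with quadratic cost into letters supported on $\{j,k\}$ that visibly commute with $\kgens[i]$ is a statement of essentially the same depth as \cref{lemma:convert-first-diagonal} itself, in a configuration that lemma does not cover. You explicitly flag this step as the main obstacle and do not prove it, so the argument for $i,j\in\range r$ is incomplete.

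The paper disposes of this case by a symmetry you did not invoke: the presentation $\presentation\X\R$ is invariant under any permutation of the factor indices $\range{n-1}$, so the induced automorphism of $\K$ preserves areas exactly. Choosing a permutation that sends $i$ into $\range[r+1]{n-1}$ and $j$ into $\range r$ (possible since $n-1\geq r+1$ and $r\geq 2$) transforms the word in question into $[u(\kgens[i']),v(\kgens[j'])]\cdot[u(\diaggens'),v(\diaggens')]^{-1}$ for a permuted diagonal alphabet $\diaggens'$, and \cref{lemma:change-diagonal} converts $[u(\diaggens'),v(\diaggens')]$ back to $[u(\diaggens),v(\diaggens)]$ at quadratic cost. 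After this one-line reduction, \cref{lemma:convert-first-diagonal} followed by \cref{lemma:convert-second-diagonal} finishes exactly as in your first case, with no separate analysis needed. I suggest replacing your case distinction by this relabelling argument, or else supplying a complete proof of the quadratic bound for $[u(\kgens[i]),\inv{v(\kgens[k])}\,v(\kgens[j])]$.
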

\begin{proof}
	Up to permuting factors and using \cref{lemma:change-diagonal}, we may assume that $i\in \range[r+1]{n-1}$. By \cref{lemma:convert-first-diagonal} we can rewrite $[u(\kgens[i]),v(\kgens[j])]$ as $[u(\kgens[i]),v(\diaggens)]$. The assertion follows by applying \cref{lemma:convert-second-diagonal}.
\end{proof}

\begin{lemma}\label{lemma:multiple-swaps}
	There exists a constant $C_n$ {depending on $n$} such that, for all $N \in \NN$, and for every $ w \in \commsub{\free{\Delta}} $ with $ \abs w \leq N $ and all $ w_1, \dots, w_{n-1} \in \free{\Delta} $ with $  \abs {w_1}, \dots, \abs {w_{n-1}} \leq N $, we have that
	\[
		\Area \left(
		\left(\prod_{i=1}^{n-1} w_i(\kgens[i])\right) \cdot w(\diaggens) \cdot  \left(w'(\diaggens) \prod_{i=1}^{n-1} w_i(\kgens[i])\right)^{-1}
		\right) \leq C_{n-1} \cdot N^2,
	\]
	where $ w'(\diaggens) = \left(\prod_{i=1}^{n-1} w_i(\diaggens)\right) \cdot  w(\diaggens) \cdot \left(\prod_{i=1}^{n-1} w_i(\diaggens)\right)^{-1} $.
\end{lemma}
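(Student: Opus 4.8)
The plan is to reduce the statement, by a one-line rearrangement, to a cleaner conjugation identity, and then to prove that identity by iterating \cref{lemma:convert-second-diagonal} once per factor. Write $g := \prod_{i=1}^{n-1} w_i(\kgens[i])$ and $g' := \prod_{i=1}^{n-1} w_i(\diaggens)$, so that by definition $w'(\diaggens) = g'\, w(\diaggens)\, (g')^{-1}$. Then the word whose area must be bounded equals
\[
g\, w(\diaggens)\, \big(w'(\diaggens)\, g\big)^{-1} \;=\; \big(g\, w(\diaggens)\, g^{-1}\big)\cdot\big(g'\, w(\diaggens)\, (g')^{-1}\big)^{-1},
\]
so it suffices to prove that $g\, w(\diaggens)\, g^{-1}$ and $g'\, w(\diaggens)\, (g')^{-1}$ are equal in $\K$ with area $\le C_{n-1}N^2$. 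Informally this says that, when one conjugates the balanced diagonal word $w(\diaggens)$, each standard block $w_i(\kgens[i])$ of the conjugator may be traded for its diagonal counterpart $w_i(\diaggens)$; this is plausible because $w(\diaggens)$ is supported on the last free factor while $w_i(\kgens[i])$ and $w_i(\diaggens)$ have the same projection there.

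To make this precise I would interpolate between $g$ and $g'$. For $0 \le k \le n-1$ set $g_k := \prod_{i=1}^{k} w_i(\kgens[i])$ and $h_k := \prod_{i=k+1}^{n-1} w_i(\diaggens)$, so $g = g_{n-1}$ and $g' = h_0$, and I would prove by downward induction on $k$ that
\[
g\, w(\diaggens)\, g^{-1} \;=\; g_k\,\big(h_k\, w(\diaggens)\, h_k^{-1}\big)\, g_k^{-1}
\]
holds with area at most $(n-1-k)\cdot C\,(2n-1)^2 N^2$, where $C$ is the constant from \cref{lemma:convert-second-diagonal}; the case $k=0$ is then exactly what is needed. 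The base case $k=n-1$ is trivial. For the inductive step one observes that $h_k\, w(\diaggens)\, h_k^{-1}$ is the substitution of $\diaggens$ into the word $\tilde w_k := \big(\prod_{i=k+1}^{n-1} w_i\big)\, w\, \big(\prod_{i=k+1}^{n-1} w_i\big)^{-1}$, which lies in $\commsub{\free\diaggens}$ (since $\commsub{\free\diaggens}$ is normal in $\free\diaggens$ and $w\in\commsub{\free\diaggens}$) and has length at most $(2n-1)N$. Since $g_k = g_{k-1}\, w_k(\kgens[k])$, one then applies \cref{lemma:convert-second-diagonal}, with $w_k$ playing the role of its $w$, $\tilde w_k$ the role of its $v$ (which lies in $\commsub{\free\diaggens}$, so the hypothesis holds), and $i=k$, to replace the subword $w_k(\kgens[k])\, \tilde w_k(\diaggens)\, w_k(\kgens[k])^{-1}$ by $w_k(\diaggens)\, \tilde w_k(\diaggens)\, w_k(\diaggens)^{-1}$ at the cost of at most $C\max\{\abs{w_k}^2, \abs{\tilde w_k}^2\}\le C(2n-1)^2 N^2$ relations. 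Because $h_{k-1} = w_k(\diaggens)\, h_k$, the resulting word is freely equal to $g_{k-1}\,\big(h_{k-1}\, w(\diaggens)\, h_{k-1}^{-1}\big)\, g_{k-1}^{-1}$, i.e.\ the assertion for $k-1$.

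Adding the $n-1$ steps gives area at most $(n-1)\,C\,(2n-1)^2 N^2$, so the lemma holds with $C_{n-1} := (n-1)\,C\,(2n-1)^2$. I do not expect a serious obstacle: essentially all the geometric content is already packaged in \cref{lemma:convert-second-diagonal}, and the present statement is just its iteration. The only points requiring a moment's care are (i) checking that the auxiliary diagonal words $\tilde w_k$ have length $O_n(N)$ uniformly in $k$, so that each of the $n-1$ applications of \cref{lemma:convert-second-diagonal} genuinely costs only $O_n(N^2)$, and (ii) the standard fact that replacing a subword by an equivalent one inside a fixed conjugating context does not change the number of relations required; every other rewriting above is a free equivalence among words in the letters $\diaggens$ and costs nothing.
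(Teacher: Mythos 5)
Your proof is correct and is essentially the paper's argument: both proofs reduce the statement to converting the conjugating blocks $w_i(\kgens[i])$ into $w_i(\diaggens)$ one at a time, applying \cref{lemma:convert-second-diagonal} exactly $n-1$ times (with the conjugated diagonal word, which stays in $\commsub{\free\Delta}$ and has length $O_n(N)$, playing the role of $v$). The only difference is bookkeeping: the paper runs a forward induction on the number of nontrivial factors and gets a recursion $C_k = 9C_{k-1}+C$, whereas your uniform length bound $\abs{\tilde w_k}\le (2n-1)N$ gives the (slightly better, but equally sufficient) constant $(n-1)C(2n-1)^2$.
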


\begin{proof} The proof is by induction on $k\in\range[0]{n-2}$, where $w_{k+1}=\cdots =w_{n-1}=1$, the case $k=0$ being trivial. So assume there is a constant $ C_{k-1} $ such that the induction hypothesis for $ k -1$ holds and assume that $w_{k+1}=\ldots = w_{n-1}=1$.

	By \cref{lemma:convert-second-diagonal} we have that
	\[
		w_k(\kgens[k]) w(\diaggens) \eqfree \left[w_k(\kgens[k]), w(\diaggens)\right] w(\diaggens) w_k(\kgens[k]) = [w_k(\diaggens), w(\diaggens)] w(\diaggens) w_k(\kgens[k])
	\]
	using $ C \cdot N^2 $ relations for some constant $ C > 0 $. Therefore
	\begin{align*}
		\left(\prod_{i=1}^{k} w_i(\kgens[i])\right) \cdot w(\diaggens) & = 		\left(\prod_{i=1}^{k-1} w_i(\kgens[i])\right) \cdot w_k(\kgens[k]) \cdot  w(\diaggens)                                       \\
		                                                               & =\left(\prod_{i=1}^{k-1} w_i(\kgens[i])\right) \cdot  [w_k(\diaggens), w(\diaggens)] \cdot  w(\diaggens) \cdot  w_k(\kgens[k]) \\
		                                                               & =\left(\prod_{i=1}^{k-1} w_i(\kgens[i])\right) \cdot  w_k(\diaggens)  w(\diaggens) \inv w_k(\diaggens) \cdot  w_k(\kgens[k])   \\
		                                                               & = w'(\diaggens) \cdot \prod_{i=1}^{k} w_i(\kgens[i]),
	\end{align*}
	where in the last step we used the inductive hypothesis that uses at most $ C_{k-1} \cdot (3N)^2 $ relations. So by letting $ C_k = 9 C_{k-1} + C $ we conclude.
\end{proof}

\begin{proof}[Proof of \cref{prop:multiply-normal-forms}]
	We start from the product%
	\[
		\normal g \normal h = \normal[d]g(\diaggens) \cdot \left( \prod_{i=1}^{n-1} \normal[i] g(\kgens [ i ]) \right) \cdot \normal[d]h(\diaggens)\cdot \left(  \prod_{i=1}^{n-1} \normal[i]h(\kgens[i]) \right),
	\]
	and we show that we can obtain the normal form $ \normal{gh} $ of $gh$ by using a quadratic amount of relations in the lengths of $ \normal g $ and $ \normal h $.

	By applying \cref{lemma:multiple-swaps}, we can rewrite the expression as
	\[
		\normal[d]g(\diaggens) \cdot w'(\diaggens)\cdot \left( \prod_{i=1}^{n-1} \normal[i] g(\kgens [ i ]) \right)  \cdot \left(  \prod_{i=1}^{n-1} \normal[i]h(\kgens[i]) \right)
	\]
	for some $w'\in\commsub{\free \Delta}$ with a quadratic number of relations. By \cref{lemma:diagonal-combined}, we can replace $ \normal[j]g(\kgens[j]) \cdot \normal[i]h(\kgens[i]) $, where $ j > i $, with
	\[
		\left[\normal[j]g(\diaggens), \normal[i]h(\diaggens)\right] \cdot \normal[i]h(\kgens[i]) \cdot \normal[j]g(\kgens[j])
	\]
	and use again \cref{lemma:multiple-swaps} to move the commutator in the alphabet $ \diaggens $ to the beginning of the expression.

	This allows us to rearrange the factors: after $ \frac{(n-1)n}{2} $ of these operations, we obtain an expression of the form
	\[
		\normal[d]{} (\diaggens) \cdot \normal[1]g(\kgens[1]) \cdot \normal[1]h(\kgens[1]) \cdots \normal[r]g(\kgens[r]) \cdot \normal[r]h(\kgens[r])
	\]
	that represents the element $ gh \in \K $: by \cref{lem:norml-form-exists-unique} this is the normal form $ \normal{gh} $ of $gh$, so we conclude.

\end{proof}

\def\n{3}
\def\r{2}
\def\kgens{x_1,x_2,y_1,y_2}
\RenewDocumentCommand{\K}{O{\n} O{\r} O{}}{K^{#1}_{#2}(\ifblank{#3}{#2}{#3})}
\RenewDocumentCommand{\Fii}{m O{\r}}{F^{(#1)}_{#2}}
\RenewDocumentCommand{\dirprod}{O{\r} O{\n}}{%
	\ifthenelse{\equal{#2}{3}}{%
		\freecopy{#1}{a} \times \freecopy{#1}{b} \times \freecopy{#1}{c}%
	}{%
		\freecopy{#1}{1} \times \dots \times \freecopy{#1}{#2}%
	}%
}

\section{The Dehn function of the Bridson-Dison group}

\label{sec:disons-group}

We now focus our attention on the case $ n=3, r=2 $, so we consider the Bridson-Dison group $ \K $, defined as the kernel of $ F_2 \times F_2 \times F_2 \to \ZZ^2 $.

This group was studied in detail in \cite{Dison-08, Dison-09}, where it was proven that its Dehn function $ \delta_{\K} $ satisfies $ N^3 \preccurlyeq \Dehn\K \preccurlyeq N^6$; the lower bound was already proven independently by Bridson \cite{BridsonPersonal}. It is finitely presented by
\[
	\K = \presentation{x_1, x_2, y_1, y_2}{[x_1,y_1],  [x_2,y_2], [x_1^{\epsilon_1},y_2^{\epsilon_2}][x_2^{\epsilon_2}, y_1^{\epsilon_1}]},
\]
where $ \epsilon_1, \epsilon_2 \in \{\pm 1\} $.

The presentation above can either be obtained from the one described in \cite[Section 13.5]{Dison-08}%
, via the identification $ \alpha_1 \mapsto x_1,\, \alpha_2 \mapsto y_1,\, \beta_1 \mapsto x_2,\, \beta_2 \mapsto y_2 $, or by applying the results of \cite{UniformBounds-25}.

If we follow the same strategy as in \cite{UniformBounds-25}, we obtain that the area of the push is at most quartic, so we conclude that $ \Dehn{\K} \asympleq N^6 $, which is the same bound obtained via the pushing argument in \cite{Dison-08}. This bound is however not sharp: in what follows we prove that $ \Dehn\K \asymp N^4 $.

We consider three copies of the free group with two generators, denoted by $ \freecopy 2a, \freecopy 2b, \freecopy 2c $, with generators denoted by $ a_1, a_2; b_1, b_2; c_1, c_2 $ respectively. Then we take the homomorphism
\[
	\bigfun{\fibration}{\dirprod}{\ZZ^2}{a_i, b_i, c_i}{e_i.}
\]
where $e_1,e_2$ is the standard basis for $\ZZ^2$.
The inclusion $ \K \longhookrightarrow \dirprod$ is given by $ x_i \mapsto a_i\inv{c}_i$, $y_i \mapsto b_i\inv{c}_i$.

Let $\trivialwords$ denote the group of elements of $ \free{x_1,x_2,y_1,y_2} $, up to free equivalence, that represent the trivial element in $ \K $. In other words, we have an exact sequence
\[
	\shortexactsequence{\trivialwords}{\free{x_1,x_2,y_1,y_2}}{\K}[.]
\]
\begin{remark}[Detecting trivial words]\label{rmk:alg-for-trivial-words}
	Given a word $w$ in the letters $x_1,x_2,y_1,y_2$, there exists a simple algorithm to check whether $w$ represents the trivial element in $\K$: since $\K$ is a subgroup of $\dirprod$, the word $w$ is trivial if and only if the projection of $w$ on each of the three factors $ \freecopy 2a, \freecopy 2b$ and $\freecopy 2c$ is trivial.

	By definition, the projection of $w$ on $\freecopy 2a$ is obtained by sending $\y_1,\y_2$ to $1$ and $x_1, x_2$ to $a_1, a_2$ respectively; analogously, the projection of $w$ on $\freecopy 2b$ is obtained by sending $\x_1,\x_2$ to $1$ and $\y_1,\y_2$ to $b_1, b_2$ respectively. The projection of $w$ on $F_2^{(c)}$ is obtained by sending $\x_1,\y_1$ to $\inv c_1$ and $\x_2,\y_2$ to $\inv c_2$.

	Therefore, the word $w$ is trivial in $ \K $ if and only if for each of the following operations:
	\begin{itemize}
		\item deleting all occurrences of $x_1, x_2$ in $w$,
		\item deleting all occurrences of $y_1, y_2$ in $w$,
		\item replacing all occurrences of $ y_i $ with $ x_i $ (i.e.~we consider $x$ and $y$ as the same letter),
	\end{itemize}
	the resulting word is trivial as an element of $F_2$.
\end{remark}

\subsection{Upper bound}\label{sub:upper-bound-r2}
We start by proving the upper bound, i.e.~that $ \Dehn\K \asympleq N^4 $. The proof is similar to the one given for the general case, except that instead of using $ \dirprod $ as ambient group for the push-down argument, we use the well-known Stallings' group $ \SB $. Initially constructed by Stallings \cite{stallings}, it was later described by Bieri \cite{bieri} as the kernel of the morphism $ \dirprod \to \ZZ $, that sends every generator $ a_i, b_i, c_i $ to the generator $ 1 \in \ZZ $. It was proven in \cite{carter2017stallings} that the Dehn function of $ \SB $ is quadratic.

We have the natural inclusions given by the diagram
\[
	\begin{tikzcd}
		\K \arrow[rd, hook] \arrow[d, hook] \\
		\SB \arrow[r, hook] & \dirprod \arrow[rd]\arrow[r] & \ZZ  \\
		&& \ZZ^2 \arrow[u] \\
	\end{tikzcd}
\]
where the projection $ \ZZ^2 \to \ZZ $ sends the generators $ e_1 $ and $ e_2 $ to $1 \in \ZZ$.

The group $ \SB $ is generated by five elements $ x_1, x_2, y_1, y_2, s$, where the first four are the image of the homonymous generators of $ \K $. The inclusion $ \SB \hookrightarrow \dirprod $ is given by $ x_i \mapsto a_i \inv c_i $, $ y_i \mapsto b_i \inv c_i $, and $ s \mapsto \inv c_1 c_2 $.%

A finite presentation is given by
\begin{equation*}\label{eq:sb-group}
	\SB=
	\left\langle\x_1\,\x_2,\y_1,\y_2,\s\ \middle|\
	\begin{aligned}
		 & [\x_1,\s\y_2]=[\y_1,\s\x_2]=[\x_1,\y_1]=[\x_2,\y_2]=1 \\
		 & \inv x_1sx_1=\inv x_2sx_2=\inv y_1sy_1=\inv y_2sy_2
	\end{aligned}\right\rangle.
\end{equation*}
\begin{remark}
	The above presentation can be obtained from the presentation
	\[
		\presentation{a,b,c,d,e}{\inv bab = \inv cac = \inv dad = \inv eae, [c,d]=[d,b]=[e,c]=[e,b]=1}
	\]
	in \cite{baumslag} by identifying $ a \mapsto s, b \mapsto x_1, c \mapsto sx_2, d \mapsto y_1, e \mapsto sy_2$. A sketch of proof that this is a presentation for $\SB$ can be found in \cite{gersten-95}.
\end{remark}

We define the \emph{height function} to be the homomorphism
\[
	h\colon \free\SBgens\to \ZZ,
\]
defined by $h(\s)=1$, $h(\x_1)=h(\x_2)=h(\y_1)=h(\y_2)=0$. In other words, $h(w)$ counts the number of $s$ (with sign) appearing in a word representing $w\in \free\SBgens$. We have the following short exact sequence:
\[
	\shortexactsequence{\K}{\SB}[h]{\ZZ}[.]
\]

We define the push-down function
\[
	\push\colon \ZZ\times  \free\SBgens \to \free\kgens
\]
as the unique function such that for $k\in \mathbb{Z}$
\begin{align*}
	\push_k(\s)   & =1                                   ,  \\
	\push_k(\x_i) & =(\y_1\inv \y_2)^k\x_i(\y_2\inv\y_1)^k, \\
	\push_k(\y_i) & =(\x_1\inv \x_2)^k\y_i(\x_2\inv\x_1)^k,
\end{align*}
for $ i\in \set{1,2} $, and
\[
	\push_k(w_1w_2)=\push_k(w_1)\push_{k+h(w_1)}(w_2)
\]
for every $ w_1, w_2 \in \free\SBgens $.

\begin{lemma}
	The map $ \push_k $ is a well-defined push-down map as per \cref{def:push-down}.
\end{lemma}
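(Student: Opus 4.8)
By \cref{lem:push-down-existence}, a push-down map is uniquely determined once we fix, for every $k \in \ZZ$ and every generator $a \in \{x_1,x_2,y_1,y_2,s\}$, an element $z_{k,a} \in \free\kgens$ with $\tilde\iota(z_{k,a}) =_{\SB} u_k \cdot a \cdot \inv u_{k + h(a)}$; the defining formulas for $\push_k$ then extend multiplicatively as in condition (1) of \cref{def:push-down}, and condition (2) is the special case $k=0$, $a \in \{x_1,x_2,y_1,y_2\}$. So the content of the lemma is really two verifications: first, that the stated formulas for $\push_k(s)$, $\push_k(x_i)$, $\push_k(y_i)$ are \emph{consistent}, i.e.\ that there exists a push-down map taking these values on generators; and second, that the formulas satisfy condition (2) of \cref{def:push-down}.

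\textbf{Step 1: choosing the lifts $u_k$ and checking the height bookkeeping.} I would take $u_k = s^k$ in $\free\SBgens$, so that $h(u_k) = k$ and $u_0 = 1$ as required. Since $h(x_i) = h(y_i) = 0$ and $h(s) = 1$, the element $u_k \cdot a \cdot \inv u_{k+h(a)}$ equals $s^k x_i s^{-k}$ (resp.\ $s^k y_i s^{-k}$) for $a = x_i$ (resp.\ $a = y_i$), and equals $s^k \cdot s \cdot s^{-(k+1)} = 1$ for $a = s$. By \cref{lem:push-down-existence} it therefore suffices to exhibit, for each such $a$, an element $z_{k,a} \in \free\kgens$ whose image under $\tilde\iota$ equals this element of $\SB$; then the unique push-down map with $\push_k(a) = z_{k,a}$ is exactly the one described in the statement (matching the multiplicative rule $\push_k(w_1w_2) = \push_k(w_1)\push_{k+h(w_1)}(w_2)$, which is condition (1)).

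\textbf{Step 2: verifying the candidate values of $z_{k,a}$.} For $a = s$ we must have $z_{k,s}$ represent the trivial element of $\K$; the formula $\push_k(s) = 1$ does this trivially. For $a = x_i$ I claim $z_{k,x_i} = (y_1 \inv y_2)^k x_i (y_2 \inv y_1)^k$ works, i.e.\ that its image in $\SB$ equals $s^k x_i s^{-k}$. Using the inclusion $\SB \hookrightarrow \dirprod$, with $x_i \mapsto a_i \inv c_i$, $y_i \mapsto b_i \inv c_i$, $s \mapsto \inv c_1 c_2$, this is a computation on each of the three free factors $\freecopy 2a, \freecopy 2b, \freecopy 2c$: on $\freecopy 2a$ the element $y_1\inv y_2$ dies and the relevant relations $[x_1, s y_2] = [y_1, s x_2] = 1$ of $\SB$ are exactly what is needed to slide an $s$ past; on $\freecopy 2b$ and $\freecopy 2c$ one checks directly that conjugating $x_i$ by $(y_1\inv y_2)^k$ has the same effect as conjugating by $s^k$. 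Concretely, the relations $\inv x_1 s x_1 = \inv x_2 s x_2 = \inv y_1 s y_1 = \inv y_2 s y_2$ together with $[x_1,s y_2] = [y_1, sx_2]= 1$ give $s y_1 \inv y_2 =_{\SB} y_1 \inv y_2 s$ after the appropriate manipulation, whence $(y_1\inv y_2)^k x_i (y_2 \inv y_1)^k =_{\SB} s^k (\dots) s^{-k}$; I would carry out this identity once and note the symmetric statement for $y_i$ follows by the symmetry $x \leftrightarrow y$, $1 \leftrightarrow 2$ (or $1 \leftrightarrow -1$) in the presentation. Finally, for condition (2) of \cref{def:push-down}: since $u_0 = 1$, we have $\push_0(x_i) = z_{0,x_i} = x_i$ and $\push_0(y_i) = y_i$, which literally equal the generators, so $\push_0(\tilde\iota(x)) =_\K x$ for $x \in \kgens$ holds on the nose.

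\textbf{Main obstacle.} The only nontrivial point is Step 2, verifying $(y_1\inv y_2)^k x_i (y_2\inv y_1)^k =_{\SB} s^k x_i s^{-k}$ and its $y$-analogue. This is a bounded, purely computational check in the presentation of $\SB$ — really just the observation that $y_1 \inv y_2$ and $x_1 \inv x_2$ act on the relevant elements the same way $s$ does, which is built into the relations $[x_1, sy_2] = [y_1, sx_2] = 1$ and the common-conjugation relations. I would streamline it by projecting to the three factors of $\dirprod$ and checking equality factor by factor, where it becomes manifest. Everything else (existence and uniqueness of the extension, the height arithmetic $h(s^k) = k$, condition (2)) is immediate from \cref{lem:push-down-existence} and \cref{def:push-down}.
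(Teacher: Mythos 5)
Your proof is correct and follows the same route as the paper's (much terser) argument: invoke \cref{lem:push-down-existence} with $u_k = s^k$, check that the prescribed generator values $z_{k,a}$ satisfy $\tilde\iota(z_{k,a}) =_{\SB} u_k\, a\, \inv u_{k+h(a)}$ (most cleanly by projecting to the three factors of $\dirprod$), and observe that condition (2) holds since $\push_0(x_i)=x_i$ and $\push_0(y_i)=y_i$. You simply spell out the verification that the paper leaves implicit.
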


\begin{proof}
	The first property holds by construction. The second one also follows easily by noting that $ \push_0(x_i) = x_i $ and $ \push_0(y_i) = y_i $. The well-definedness then follows as in \cref{lem:push-down-existence}.
\end{proof}

\begin{lemma}\label{lemma:c8-utile}

	Fix $ m \in \NN $. Then there is a constant $ C_m > 0 $ such that, for every $w, v \in F_2$ with $ \abs w, \abs v \leq m$, the word
	\[\left[v(y_1, y_2),w(x_1, x_2)^N\right]\overline{\left[v(x_1,x_2),w(y_1,y_2)^N\right]} \]
	represents the trivial element of $ \K $ and has area bounded by $ C_m \cdot N^2 $.
\end{lemma}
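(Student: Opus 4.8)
The plan is to write the word out explicitly as a product of a controlled number of conjugates of relators, after a small reorganization that relocates the $N$-th power. Write $w_x:=w(x_1,x_2)$, $w_y:=w(y_1,y_2)$, $v_x:=v(x_1,x_2)$, $v_y:=v(y_1,y_2)$, and let $\kappa:=[v_y,w_x]$, $\kappa':=[v_x,w_y]$ and $t:=w_y^{-1}w_x$ denote the corresponding \emph{words} in $F(x_1,x_2,y_1,y_2)$, all of length bounded in terms of $m$. The starting observation is the collection of free-group identities $w_x=w_y t$, $\kappa w_x=(\kappa w_y)t$, $[v_y,w_x^N]=(\kappa w_x)^N w_x^{-N}$ and $[v_x,w_y^N]=(\kappa' w_y)^N w_y^{-N}$, all of which hold after free reduction. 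Next, using \cref{rmk:alg-for-trivial-words} — i.e.\ projecting to the three free factors $\freecopy 2a$, $\freecopy 2b$, $\freecopy 2c$ — I will check: $\kappa$ and $\kappa'$ both project trivially to $\freecopy 2a$ and $\freecopy 2b$ and to $[v(\overline c_1,\overline c_2),w(\overline c_1,\overline c_2)]$ in $\freecopy 2c$, so $\kappa$ and $\kappa'$ represent the same element of $\K$; and $[w_x,w_y]$, $[t,w_y]$ and $[t,\kappa w_y]$ are all trivial in $\K$ (for instance $[t,w_y]$ maps to $[w(a_1,a_2),1]=1$ on $\freecopy 2a$, to a commutator of two powers of $w(b_1,b_2)$ on $\freecopy 2b$, and to $[1,w_y]=1$ on $\freecopy 2c$). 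Since these are finitely many words of length bounded in terms of $m$ that are trivial in the finitely presented group $\K$, each has area at most a constant $C_m$ depending only on $m$.

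The only step that is not purely formal is a sorting estimate: if $a,b\in F(x_1,x_2,y_1,y_2)$ satisfy $[a,b]=1$ in $\K$ with $\Area([a,b])\le C$, then $(ab)^N$ and $a^Nb^N$ represent the same element of $\K$, and the word $(ab)^N(a^Nb^N)^{-1}$ has area at most $C\binom N2$, obtained by commuting the $N$ copies of $b$ past the $N$ copies of $a$ one transposition at a time. I will apply this twice. With $(a,b)=(w_y,t)$, so that $ab=w_x$ freely, it yields $w_x^N=w_y^Nt^N$ in $\K$ with area $\le C_m\binom N2$, hence $w_x^{-N}w_y^N=t^{-N}$ in $\K$ with area $\le C_m\binom N2$. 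With $(a,b)=(\kappa w_y,t)$, so that $ab=\kappa w_x$ freely, it yields $(\kappa w_x)^N=(\kappa w_y)^Nt^N$ in $\K$ with area $\le C_m\binom N2$.

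It then remains to assemble the pieces. By the two free identities for the commutators, together with $\kappa=\kappa'$ in $\K$ (used once inside each of the $N$ factors of $(\kappa' w_y)^N$, for a total cost $\le C_m N$), the word $[v_y,w_x^N]\,[v_x,w_y^N]^{-1}$ is equal in $\K$, after free reduction, to $(\kappa w_x)^N\, w_x^{-N}\, w_y^N\, (\kappa w_y)^{-N}$. Substituting $w_x^{-N}w_y^N=t^{-N}$ and $(\kappa w_x)^N=(\kappa w_y)^Nt^N$ — the two applications of the sorting estimate — transforms this into $(\kappa w_y)^N\, t^N\, t^{-N}\, (\kappa w_y)^{-N}$, which is freely trivial. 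Adding up the contributions (two of size $\le C_m\binom N2$ and one of size $\le C_m N$) bounds the total area by $C_m N^2$ after enlarging $C_m$; as a byproduct this reproves that the word represents the trivial element of $\K$, so \cref{rmk:alg-for-trivial-words} is invoked only for the bounded-area commutation facts.

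The one point requiring a little insight is the reorganization $[v_y,w_x^N]=(\kappa w_x)^N w_x^{-N}$: it pushes the $N$-th power onto $\kappa w_x=\kappa w_y\cdot t$, where $t$ commutes with $\kappa w_y$ in $\K$ at bounded cost. The naive alternative — expanding $[v_y,w_x^N]=\prod_{k=0}^{N-1}w_x^k[v_y,w_x]w_x^{-k}$ and trying to match the $k$-th conjugate against $w_y^k[v_x,w_y]w_y^{-k}$ term by term — does not reach the quadratic bound, since identifying the two $k$-th terms already appears to need $\asymp k^2$ relators, and summing over $k$ yields only a cubic estimate. So the main (mild) obstacle is to spot this reorganization; the quadratic (rather than linear) bound is genuine and is accounted for precisely by the two sorting steps, each carried out once.
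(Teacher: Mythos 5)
Your argument is correct, and it is genuinely different from the one in the paper. The paper disposes of this lemma in two lines by citing its companion paper: \cite[Lemma A.13]{UniformBounds-25} supplies the base case $\Area\big([y_1,x_2^N]\,\overline{[x_1,y_2^N]}\big)\preccurlyeq N^2$, and \cite[Proposition 3.7]{UniformBounds-25} (an area-transfer result under substitution of generators by words of bounded length) then yields the general statement via $\xi_1\mapsto v$, $\xi_2\mapsto w$. You instead build the quadratic filling from scratch. Your free identities $[v_y,w_x^N]=(\kappa w_x)^N w_x^{-N}$ with $\kappa w_x= v_yw_xv_y^{-1}$ check out, the projection computations via \cref{rmk:alg-for-trivial-words} showing $\kappa=_{\K}\kappa'$ and the triviality of $[t,w_y]$, $[t,\kappa w_y]$ are all correct, and the two sorting steps each cost $C_m\binom N2$ as claimed, so the assembly $(\kappa w_y)^N t^N t^{-N}(\kappa w_y)^{-N}$ is indeed freely trivial with total cost $O_m(N^2)$. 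What your route buys is self-containment (no reliance on the companion paper's base case or its substitution machinery) together with an explicit picture of where the quadratic cost arises, namely in the two shuffles of $t^N$ past $w_y^N$ and past $(\kappa w_y)^N$; what the paper's route buys is brevity and reusability, since the substitution proposition handles all such families at once from a single hand-checked base case. Your closing remark that the naive term-by-term matching of $w_x^k[v_y,w_x]w_x^{-k}$ against $w_y^k[v_x,w_y]w_y^{-k}$ only gives a cubic bound is also a fair diagnosis of why some reorganization of this kind is needed.
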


\begin{proof}

	By \cite[Lemma A.13]{UniformBounds-25}, we know that $ \Area([y_1, x_2^N]\inv{[x_1,y_2^N]}) \leq B N^2$ for some constant $ B>0 $.

	Define the homomorphism $ \phi \colon F(\afgen1,\afgen2) \mapsto F(\afgen1,\afgen2) $ by sending $ \afgen1 \mapsto v(\afgen1,\afgen2) $ and $ \afgen2 \mapsto w(\afgen1,\afgen2) $.  We now apply \cite[Proposition 3.7]{UniformBounds-25} to obtain that
    \[
        \Area( [v(y_1, y_2),w(x_1, x_2)^N]\overline{[v(x_1,x_2),w(y_1,y_2)^N]} ) \leq C_m \cdot N^2
    \]    
    for some constant $C_m$ depending only on $m$.

\end{proof}

\begin{lemma}\label{lemma:area-push-sb}
	There exists a constant $C$ such that the following holds.
	Let $R$ be a relation of $ \SB $. Then $ \push_N(R) $ is an element of $ \free\kgens $, whose area with respect to the presentation of $ \K $ is bounded by $ C N^2 $.
\end{lemma}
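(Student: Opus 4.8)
Throughout, the relevant presentation of $\SB$ has relators: the four commutators $[x_1,y_1]$, $[x_2,y_2]$, $[x_1,sy_2]$, $[y_1,sx_2]$, and the conjugation relators, for which I would use $\bar x_1 s x_1\bar z\bar s z$ with $z\in\{x_2,y_1,y_2\}$. For each relator $R$ the plan is: first write down $\push_N(R)$ as an explicit element of $\free\kgens$, then bound its area. Expanding $R$ one letter at a time through the multiplicativity of $\push$ (\cref{def:push-down}) and using $\push_k(s)=1$, the occurrences of $s^{\pm 1}$ merely shift the running height $k$ (which stays within $\{N-1,N,N+1\}$ for these $R$), and what remains is a product of ``shifted generators''
\[
	\push_k(x_i)=Y^k x_i Y^{-k},\qquad \push_k(y_i)=X^k y_i X^{-k},\qquad X:=x_1\bar x_2,\quad Y:=y_1\bar y_2,
\]
using $(y_2\bar y_1)^k=Y^{-k}$ and $(x_2\bar x_1)^k=X^{-k}$. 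For example, $\push_N([x_1,y_1])=[\push_N(x_1),\push_N(y_1)]$, and $\push_N(\bar x_1 s x_1\bar y_1\bar s y_1)=\push_N(x_1)^{-1}\push_{N+1}(x_1)\push_{N+1}(y_1)^{-1}\push_N(y_1)$.

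Two inputs carry the argument. First, $[X,Y]=1$ holds in $\K$ and is a consequence of finitely many of the defining relations of $\K$; hence any power of $X$ can be pushed past any power of $Y$ at cost $O(N^2)$. Second, and this is where \cref{lemma:c8-utile} is used, for all $i,j\in\{1,2\}$ and $|k|\le N+1$ I claim
\[
	\push_k(x_i)=x_i\bar y_j\,\push_k(y_j)\quad\text{modulo }O(N^2)\text{ relations of }\K.
\]
When $i=j$, rewriting $\push_k(x_i)\push_k(y_j)^{-1}y_j x_i^{-1}$ with the above formulas turns it into $[\bar y_i,X^k]\,\overline{[\bar x_i,Y^k]}$, which is exactly the word controlled by \cref{lemma:c8-utile} applied with $v(\xi_1,\xi_2)=\bar\xi_i$ and $w(\xi_1,\xi_2)=\xi_1\bar\xi_2$; when $i\ne j$ one also uses $[\bar y_1,X^k]=[\bar y_2,X^k]$, which follows from $[X,Y]=1$ with $O(k)\le O(N)$ relations.

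The conjugation relators are then disposed of quickly. For $R=\bar x_1 s x_1\bar x_2\bar s x_2$, collecting the adjacent $Y$-powers collapses $\push_N(R)$ to $Y^N w_0 Y^{-N}$ with $w_0=\bar x_1(y_1\bar y_2)x_1\bar x_2(y_2\bar y_1)x_2$ of bounded length; since $\push_N(R)$ represents $s^N R s^{-N}=1$ in $\SB$ and lies in $\free\kgens$, the word $w_0$ represents $1$ in $\K$, hence has $O(1)$ area, and conjugation-invariance of area gives $\Area(\push_N(R))=O(1)$. For $R=\bar x_1 s x_1\bar y_j\bar s y_j$, substituting $\push_k(x_1)=x_1\bar y_j\,\push_k(y_j)$ (the displayed identity with $i=1$) into $\push_N(x_1)^{-1}\push_{N+1}(x_1)\push_{N+1}(y_j)^{-1}\push_N(y_j)$ cancels the bounded-length factors and the $\push_{N+1}(y_j)$-factors and leaves the freely trivial word; hence $\Area(\push_N(R))=O(N^2)$.

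The commutator relators are the heart of the matter, and the main obstacle. For $R=[x_1,y_1]$, substituting $\push_N(x_1)=x_1\bar y_1\,\push_N(y_1)$ into $[\push_N(x_1),\push_N(y_1)]$ and using $[ab,b]=[a,b]$ reduces $\push_N(R)$, modulo $O(N^2)$ relations, to $[x_1\bar y_1,\push_N(y_1)]=[x_1\bar y_1,X^N y_1 X^{-N}]$; one then has to show this has area $O(N^2)$, knowing only that $[x_1\bar y_1,y_1]=[x_1,y_1]=1$ is essentially a relation of $\K$. The point is to ``transport'' the commutation of $x_1\bar y_1$ with $y_1$ along the conjugation by $X^N$, which I would do with a further application of \cref{lemma:c8-utile} combined with the $[X,Y]=1$-manipulations above; the relator $[x_2,y_2]$ is symmetric, and $[x_1,sy_2]$, $[y_1,sx_2]$ are handled the same way after accounting for the $\pm1$ height shifts from the single $s$ (which perturb the exponents of $X,Y$ by $\pm1$ only and so do not affect the $O(N^2)$ bound). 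Combining the finitely many cases gives a uniform $C$ with $\Area(\push_N(R))\le CN^2$ for every relator $R$ of $\SB$; the argument is uniform in $N$ and works verbatim with $\push_q$ for $|q|\le N$, as needed to feed into \cref{thm:push-down}. The delicate step is the last one in the commutator case: since $\push$ conjugates $x$-letters by powers of $Y$ but $y$-letters by powers of $X$, the pushed commutator is not directly a conjugate of a relation of $\K$, and recasting it as a controlled combination of the braided-commutator estimates of \cref{lemma:c8-utile} without losing the quadratic bound is where the real work lies.
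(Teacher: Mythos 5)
Your overall architecture --- expand $\push_N(R)$ letter by letter, track the height, and use \cref{lemma:c8-utile} to trade commutators with powers of $X=x_1\inv x_2$ for commutators with powers of $Y=y_1\inv y_2$ --- is the same as the paper's, and your treatment of the conjugation relators is correct. In particular, $\push_N(\inv x_1 s x_1 \inv x_2 \inv s x_2)$ does collapse to a conjugate of a bounded-length trivial word, and your identity $\push_k(x_i)=x_i\inv y_j\,\push_k(y_j)$ modulo $O(N^2)$ relations is a correct repackaging of \cref{lemma:c8-utile} (together with $[X,Y]=1$ when $i\neq j$) and cleanly disposes of $\inv x_1 s x_1\inv y_j \inv s y_j$.

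However, there is a genuine gap exactly where you flag it, namely in the commutator relators. For $R=[x_1,y_1]$ your substitution reduces $\push_N(R)$ to $[x_1\inv y_1,\,X^N y_1 X^{-N}]$, and you assert this can be filled quadratically by ``a further application of \cref{lemma:c8-utile} combined with the $[X,Y]=1$-manipulations'', without saying how. That residual word is not easier than the original problem: writing $X^N y_1 X^{-N}=y_1[\inv y_1,X^N]$ and expanding leaves $[x_1\inv y_1,[\inv y_1,X^N]]$, the commutator of an element supported on the first two factors of $\dirprod$ with a length-$O(N)$ element supported on the third factor; bounding the cost of exactly such commutations is the hard content of the whole computation, and neither \cref{lemma:c8-utile} nor $[X,Y]=1$ applies to it directly (note that $x_1\inv y_1$ commutes with neither $X$ nor $Y$). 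The paper's actual trick is to apply \cref{lemma:c8-utile} in the opposite direction: it expands $\push_N(y_1^{\pm1})=y_1^{\pm1}[y_1^{\mp1},X^N]$ inside the commutator and replaces only the factors $[y_1^{\mp1},X^N]$ by $[x_1^{\mp1},Y^N]$, after which \emph{every} conjugation occurring in the word is by a power of $Y$, and the word then freely reduces to a conjugate of the single relation $[y_1,\inv x_1]$. Your substitution instead converts the $Y$-conjugation of $x_1$ into an $X$-conjugation of $y_1$ plus a bounded prefix, so the two conjugating alphabets stay mixed and nothing cancels. Until you supply an actual quadratic filling of $[x_1\inv y_1,X^N y_1 X^{-N}]$ and its analogues for the other commutator relators, the proof is incomplete.
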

\begin{proof}%
	Let $R$ be as above. To prove the statement, we start from $ \push_N(R) $, and at each step we either multiply by at most $O(N^2)$ conjugates of some relations belonging to the presentation of $\K$, or we apply \cref{lemma:c8-utile}, until we get the trivial element. Both types of transformations involve a quadratic amount of relations, so this will give the desired conclusion.

	\begin{itemize}

		\item We start with $ \push_N([\x_1,\y_1]) $, which can be written as
		      \[
			      (\y_1\inv\y_2)^N\x_1(\y_2\inv\y_1)^N\cdot \y_1\left[\inv\y_1,(\x_1\inv\x_2)^N\right]
			      \cdot(\y_1\inv\y_2)^N\inv\x_1(\y_2\inv\y_1)^N\cdot \inv \y_1\left[\y_1,(\x_1\inv\x_2)^N\right].
		      \]
		      By applying \cref{lemma:c8-utile} to the two commutators we get
		      \[
			      (\y_1\inv\y_2)^N\x_1(\y_2\inv\y_1)^N\cdot \y_1\left[\inv x_1,( y_1\inv y_2)^N\right]
			      \cdot(\y_1\inv\y_2)^N\inv\x_1(\y_2\inv\y_1)^N\cdot \inv \y_1\left[ x_1,( y_1\inv y_2)^N\right].
		      \]
		      This simplifies to
		      \[
			      (\y_1\inv\y_2)^N\x_1(\y_2\inv\y_1)^N \y_1 \inv x_1 \inv \y_1 x_1 (y_1\inv y_2)^N \inv x_1 (y_2 \inv y_1)^N,
		      \]
		      which is a conjugate of $ [y_1, \inv x_1] $.

		\item Next consider $\push_N([x_1,sy_2])$. It is freely equivalent to

		      \begin{multline*}
			      \qquad (y_1\inv y_2)^N x_1(y_2\inv y_1)^N  (\x_1\inv\x_2)^{N+1} (y_2 \inv y_1) y_1 (x_2 \inv x_1)^{N+1}\cdot                                \\
			      \cdot ( y_1\inv y_2)^{N+1}\inv x_1( y_2\inv y_1)^{N+1}  ( x_1\inv x_2)^{N+1}\inv y_1 (y_1 \inv y_2)  (x_2 \inv x_1)^{N+1},
		      \end{multline*}
		      which, after commuting $ (y_2 \inv y_1) $ with $(x_1\inv x_2)$ a linear amount of times, becomes
		      \begin{multline*}
			      \qquad (y_1\inv y_2)^N x_1(y_2\inv y_1)^{N+1} (\x_1\inv\x_2)^{N+1}  y_1 (x_2 \inv x_1)^{N+1}                                \\
			      \cdot ( y_1\inv y_2)^{N+1}\inv x_1( y_2\inv y_1)^{N+1}  ( x_1\inv x_2)^{N+1}\inv y_1  (x_2 \inv x_1)^{N+1}(y_1 \inv y_2)
		      \end{multline*}
		      or, equivalently, \pagebreak[2]
		      \[
			      (y_2 \inv y_1) \cdot \push_{N+1}([x_1, y_1]) \cdot (y_1 \inv y_2)
		      \]
		      and we conclude as in the previous case.

		\item We now compute $\push_N(\inv\x_1\s\x_1\inv\y_1 \inv s y_1) $, which is equal to
		      \[
			      (\y_1\inv\y_2)^N\inv\x_1 (y_2 \inv y_1)^N (y_1 \inv y_2)^{N+1} x_1(y_2\inv y_1)^{N+1}\cdot (\x_1\inv\x_2)^{N+1}\inv y_1 (x_2 \inv x_1)^{N+1} (x_1 \inv x_2)^N y_1 (x_2 \inv x_1)^N.
		      \]
		      This can be rewritten as
		      \[
			      [(\y_1\inv\y_2)^N, \inv\x_1 ][\inv x_1, (y_1 \inv y_2)^{N+1}]\left[(\x_1\inv\x_2)^{N+1},\inv\y_1\right]\left[\inv\y_1,(\x_1\inv\x_2)^N\right].
		      \]
		      We apply \cref{lemma:c8-utile} twice to conclude.
		\item Finally, we consider $ \push_N(\inv\x_1\s\x_1\inv\x_2\ \inv s \x_2) $, which can be freely reduced to
		      \[
			      (\y_1\inv\y_2)^N\inv\x_1(\y_1\inv\y_2)\x_1 \inv\x_2 (\y_2\inv\y_1) \x_2(\y_2\inv\y_1)^{N}.
		      \]
		      This is conjugate to $ [y_1 \inv y_2, x_1 \inv x_2] $, so it has bounded area independent of $N$.
	\end{itemize}

	The proofs for all other relations are similar.
\end{proof}

We are now ready to prove the upper bound.

\begin{proposition}\label{prop:upper-bound-K322}
	The Dehn function of the group $ \K $ satisfies $ \Dehn\K \asympleq N^4 $.
\end{proposition}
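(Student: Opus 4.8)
The plan is to derive the upper bound as a direct consequence of the push-down machinery (\cref{thm:push-down}) applied to the short exact sequence $1 \to \K \to \SB \to \ZZ \to 1$, feeding in the quadratic Dehn function of the Stallings group and the quadratic area estimate for pushed relations proved in \cref{lemma:area-push-sb}.

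First I would recall that $\delta_{\SB}(N) \asymp N^2$ by Carter--Forester \cite{carter2017stallings}. By \cref{prop:linear-radius} (Papasoglu), this yields an area-radius pair $(\alpha_{\SB}, \rho_{\SB})$ for $\SB$ with $\alpha_{\SB}(N) \asymp N^2$ and $\rho_{\SB}(N) \asymp N$. Next, the push-down map $\push$ for the height homomorphism $h\colon \free\SBgens \to \ZZ$ has already been constructed and shown to be well-defined. Since $h$ sends $s \mapsto 1$ and all other generators to $0$, we have $\widetilde h(\SBgens) = \{0, 1, -1\}$, which generates $\ZZ$ with $\abs{q}_{\widetilde h(\SBgens)} = \abs q$; hence the quantity controlled in the hypothesis of \cref{thm:push-down} is exactly $\max_{R \in \mathcal C,\ \abs q \leq N} \Area_\mathcal R(\push_q(R))$, which by \cref{lemma:area-push-sb} is bounded above by $C N^2$. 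So we may take $f(N) = C N^2$.

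Applying \cref{thm:push-down} then gives
\[
	\Dehn{\K}(N) \asympleq \alpha_{\SB}(N) \cdot f(\rho_{\SB}(N)) \asymp N^2 \cdot \bigl(C\,\rho_{\SB}(N)^2\bigr) \asymp N^2 \cdot N^2 = N^4,
\]
which is the claimed bound. I do not expect any genuine obstacle at this stage: all the substantive work — choosing $\SB$ rather than $\dirprod$ as the ambient group (so that the ambient Dehn function is $N^2$ instead of something larger), and verifying that each pushed relation fills with quadratic area via \cref{lemma:c8-utile} — has already been carried out in the preceding lemmas. The only points requiring a line of care are the identification $\abs q_{\widetilde h(\SBgens)} = \abs q$ and the bookkeeping that $\rho_{\SB}(N) \asymp N$ feeds into $f$ as a square, turning $N^2 \cdot N^2$ into $N^4$ rather than a higher power.
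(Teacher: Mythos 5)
Your proposal is correct and follows essentially the same route as the paper: combine the quadratic Dehn function of $\SB$ from \cite{carter2017stallings} with \cref{prop:linear-radius} to get the area-radius pair $(N^2,N)$, feed the quadratic bound of \cref{lemma:area-push-sb} into \cref{thm:push-down}, and conclude $N^2\cdot N^2=N^4$. The extra bookkeeping you supply (identifying $\abs{q}_{\widetilde h(\SBgens)}$ with $\abs q$ and tracking how $\rho_{\SB}$ enters $f$) is implicit in the paper's shorter proof.
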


\begin{proof}
	By \cref{lemma:area-push-sb}, the area of the push of the relations of $\SB$ is bounded by a quadratic function. Since $ \SB $ has quadratic Dehn function \cite[Cor. 4.3]{carter2017stallings}, it has linear radius by \cref{prop:linear-radius}. By applying \cref{thm:push-down} we thus get that $ \Dehn\K \asympleq N^4 $, as desired.
\end{proof}

\subsection{Lower bound}
We will now prove the lower bound. Our proof relies on a new invariant that we will call the braid-invariant. We will first introduce it and then show how it can be used to obtain a quartic lower bound on the Dehn function of $K_2^3(2)$.

\subsubsection{The braid-invariant}\label{sssec:braid-invariant}
We will associate to every $ w \in \trivialwords $ a loop inside the configuration space of two ordered points inside $ \CC \setminus \{0\} $, denoted by $ \Conf_2(\CC \setminus \{0\}) $. For this purpose, recall that the $n$-th configuration space $ \Conf_n(X) $ of a topological space $X$ is the subset of $X^n$ consisting of $n$-tuples whose coordinates are pairwise distinct:
\[\Conf_n(X)=\{(p_1,\dots ,p_n)\ |\ p_i\neq p_j \forall 1\leq i< j \leq n\}.\]
The space $\Conf_n(X)$ is equipped with the induced topology as a subset of the product $X^n$.

Now we restrict ourselves to the case of $X=\CC\setminus\{0\}$.
An element of the fundamental group $\pi_1(\Conf_2(\CZ), (\hat p_\x,\hat p_\y))$ can be described as a pair of paths $(\gamma_\x,\gamma_\y)$ in $\CZ$ in which $\gamma_\x(0)=\gamma_\x(1)=\hat p_\x$, $\gamma_\y(0)=\gamma_\y(1)=\hat p_\y$, and $\gamma_\x(t) \neq \gamma_\y(t)$ for all $t\in[0,1]$.
More intuitively, a representative of an element of the fundamental group $\pi_1(\Conf_2(\CZ), (\hat p_\x,\hat p_\y))$ can be thought of as two points moving inside $\CZ$ starting from the position $\hat p_\x,\hat p_\y$, without ever colliding and, at the end, coming back to $\hat p_\x,\hat p_\y$. The way we associate an element of $\pi_1(\Conf_2(\CZ))$ to a word $w \in \trivialwords $ is by prescribing how these two points should move; more precisely, the letters $\x_1,\x_2$ will command the first point to move right, respectively up by $1$ unit, while the letters $\y_1,\y_2$ will command the second point to move left, respectively down by $1$ unit; all the commands are processed one by one while reading the word $w$.

We will see that the commands given to the two points will make them come back to their starting position, because we are only considering words representing the trivial element in $\K$.
Moreover, if the starting positions are chosen suitably (that is by requiring that their coordinates and the coordinates of their difference are not integral), then, whatever the word $w$ is, the two points stay away from $0$ and do not collide.

In this way, given suitable starting positions, we define a homomorphism from the set of trivial words $\trivialwords$ to the fundamental group $\pi_1(\Conf_2(\CZ), (\hat p_\x,\hat p_\y))$. We use this map to define an invariant, that we call \emph{braid-invariant}, which we will use to provide a lower bound for the Dehn function of $\K$.

We will now formalize the above description.
Consider two Gaussian integers $p_\x, p_\y \in \ZZ[i]$ (i.e.~complex numbers with integral real and imaginary part), and set
\begin{align*}
	\hat p_\x\ =\   & p_x-\left(\frac{1}{3}+\frac{1}{3}i\right), \\
	\hat p_\y\ = \  & p_y+\left(\frac{1}{3}+\frac{1}{3}i\right),
\end{align*}
so that $\hat p_\x, \hat p_\y$ and $\hat p_\x-\hat p_y$ are complex numbers with neither coordinate being integral.

Denote $ \disonwords=F(x_1,x_2,y_1,y_2) $. Since a word $w\in \trivialwords$ represents the trivial element in $\tripr$, every generator appears in $w$ the same number of times as its inverse: indeed, $\x_i$ is the only generator that produces the letter $a_i$, and $\y_i$ is the only one that produces $b_i$. Therefore, $w$ is an element of $[\disonwords,\disonwords]$, and we obtain an injective homomorphism $ \trivialwords\hookrightarrow[\disonwords,\disonwords]$.

We define a homomorphism
\[\widetilde I_{p_\x,p_\y}\colon [\disonwords,\disonwords]\to \pi_1\left(\Conf_2(\CZ), (\hat p_\x,\hat p_\y)\right),\]
depending on $p_\x,p_\y$, that sends each element $g\in [\disonwords,\disonwords]$ to an element of the fundamental group $\pi_1(\Conf_2(\CZ),(\hat p_\x,\hat p_\y))$ represented by a closed loop $(\gamma_\x,\gamma_\y)$ in $\Conf_2(\CZ)$ defined as follows: let $w$ be a word representing $g$ and let $\ell$ be the number of letters appearing in $w$. Set $\gamma_\x(0)=\hat p_\x$, $\gamma_\y(0)=\hat p_\y$.
If the $k$-th letter of $w$ is $\x_1^\delta$ (resp. $\x_2^\delta$), for some $\delta\in \{\pm1\}$, then, the path $\gamma_\x|_{[(k-1)/\ell,k/\ell]}$ is the straight line from $\gamma_\x((k-1)/\ell)$ to $\gamma_\x((k-1)/\ell) + \delta$ (resp. $\gamma_\x((k-1)/\ell)+ i\cdot\delta$), while it is the trivial path if the $k$-th letter is $\y_1^\delta$ or $\y_2^\delta$.
Analogously, if the $k$-th letter of $w$ is $\y_1^\delta$ (resp. $\y_2^\delta$), then the path $\gamma_\y|_{[(k-1)/\ell,k/\ell}$ is the straight line from $\gamma_\y((k-1)/\ell)$ to $\gamma_\y((k-1)/\ell) -\delta$ (resp.~$\gamma_\y((k-1)/\ell)- i\cdot \delta$), while it is the trivial path if the $k$-th letter is $\x^\delta_1$ or $\x^\delta_2$.

\begin{lemma} The map $\widetilde I_{p_\x,p_\y}$ is well-defined for all Gaussian integers $p_x,p_y$.
\end{lemma}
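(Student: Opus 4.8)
The plan is to check the two assertions implicit in ``well-defined'': \textbf{(i)} for any word $w$ representing a given $g\in[\disonwords,\disonwords]$, the prescription in the text produces an honest loop in $\Conf_2(\CZ)$ based at $(\hat p_\x,\hat p_\y)$; and \textbf{(ii)} the homotopy class of this loop does not depend on the chosen representative $w$.

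For \textbf{(i)} I would simply follow the coordinates through the piecewise-linear construction. After reading any prefix of $w$, the point $\gamma_\x$ has moved from $\hat p_\x$ by an element of $\ZZ+i\ZZ$, so both of its coordinates lie in $\ZZ-\tfrac13$; during a horizontal move only the real coordinate changes while the imaginary one stays in $\ZZ-\tfrac13$, and symmetrically during a vertical move. Hence at every time some coordinate of $\gamma_\x(t)$ is a non-integer, so $\gamma_\x(t)\neq 0$; the identical argument with $\ZZ+\tfrac13$ gives $\gamma_\y(t)\neq 0$. For the non-collision condition I use that each letter of $w$ commands exactly one of the two points to move, so on each subinterval exactly one of $\gamma_\x,\gamma_\y$ is non-constant; it follows that at every time $\gamma_\x(t)-\gamma_\y(t)$ has a coordinate of the form $n-\tfrac23$ with $n\in\ZZ$, which is never $0$. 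Finally the loop closes up: the net horizontal (resp.\ vertical) displacement of $\gamma_\x$ equals the exponent sum of $\x_1$ (resp.\ $\x_2$) in $w$, and likewise for $\gamma_\y$ with $\y_1,\y_2$; all four exponent sums vanish because $g\in[\disonwords,\disonwords]$.

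For \textbf{(ii)}, two words represent the same element of $\disonwords$ precisely when they differ by a finite sequence of insertions/deletions of subwords $a\inv a$ with $a$ a letter, so it suffices to compare the loops of $w=uv$ and $w'=u\,a\inv a\,v$. Up to reparametrization the loop of $w'$ is the concatenation of the portion coming from $u$, a ``there-and-back'' detour $\mu\ast\bar\mu$ in which one point traverses a unit segment and immediately returns while the other stays fixed, and the portion coming from $v$; by the bookkeeping of \textbf{(i)} this detour is a genuine based loop in $\Conf_2(\CZ)$, and $\mu\ast\bar\mu$ is null-homotopic rel endpoints. Since inserting a null-homotopic loop in the middle of a path does not change its homotopy class rel endpoints, the loops of $w$ and $w'$ represent the same class in $\pi_1(\Conf_2(\CZ),(\hat p_\x,\hat p_\y))$. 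The homomorphism property claimed in the text is then immediate: concatenating words $w_1,w_2$ representing $g_1,g_2\in[\disonwords,\disonwords]$ concatenates the corresponding loops, each of which is based at $(\hat p_\x,\hat p_\y)$.

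The bulk of this is routine bookkeeping; the only point requiring attention is that the fractional offsets $\pm\tfrac13$ were chosen exactly so that $\gamma_\x$, $\gamma_\y$ and their difference never meet a coordinate axis, which, together with the fact that at most one point moves at a time, is what makes both the ``stays in $\Conf_2(\CZ)$'' claim and the triviality of the detour go through. I do not anticipate a genuine obstacle.
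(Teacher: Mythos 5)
Your proposal is correct and follows essentially the same route as the paper's proof: the fractional offsets $\pm\frac13$ keep $\gamma_\x$, $\gamma_\y$ and their difference off the integer lattice (using that only one point moves at a time), vanishing exponent sums close the loop, and free reduction corresponds to inserting/removing null-homotopic backtracking detours. No gaps.
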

\begin{proof}
	Let $g\in [\disonwords,\disonwords]$, $w$ a representative of $g$ and $(\gamma_\x,\gamma_\y)$ be as in the definition of $I_{p_\x,p_\y}$. Because none of the coordinates of the three complex numbers $\hat p_\x$, $\hat p_\y$ and $\hat p_\x - \hat p_\y$ is integral, and $\gamma_\x(k/\ell)$ (resp. $\gamma_\y(k/\ell)$) is obtained by adding to $\hat p_\x$ (resp. $\hat p_\y$) a Gaussian integer, the coordinates of $\gamma_\x(k/\ell)$, $\gamma_\y(k/\ell)$ and $\gamma_\x(k/\ell)-\gamma_\y(k/\ell)$ are also not integral for any $k\in\{1,\dots ,\ell\}$.
	As one of the two paths $\gamma_\x|_{[(k-1)/\ell,k/\ell]}$ and $\gamma_\y|_{[[(k-1)/\ell,k/\ell]}$ is constant, and the other is parallel to the real or to the imaginary axis, the three points $0$, $\gamma_\x(t)$ and $\gamma_\y(t)$ are pairwise distinct for every $t\in[0,1]$. This proves that $(\gamma_\x,\gamma_\y)$ is a path in $\Conf_2(\CZ)$.

	Moreover, since $w$ is a word representing an element in $[\disonwords,\disonwords]$, each generator appears the same number of times as its inverse. Thus, $\gamma_\x(1)=\gamma_\x(0)$, $\gamma_\y(1)=\gamma_\y(0)$ and $(\gamma_\x,\gamma_\y)$ is, indeed, a loop in $\Conf_2(\CZ)$ based at $(\hat p_\x, \hat p_\y)$.

	Finally, as the presentation of $\disonwords$ that we are considering has no relations, if two words represent the same element in $[\disonwords, \disonwords]$, then one can be obtained from the other by adding/removing a finite number of ``$q\overline q$'', where $q$ is a generator of the group. Then the corresponding loops are obtained from each other by removing/adding backtracking paths (paths that go in a direction and then go back right after), so without modifying the homotopy type of the loop. This proves that the map $\widetilde I_{p_\x,p_\y}$ does not depend on the chosen word $w$ representing $g$ and so the map is well-defined.
\end{proof}

We then define $I_{p_\x,p_\y}\colon \trivialwords\to \pi_1(\CZ)$ as the composition of the two maps $ \trivialwords\hookrightarrow[\disonwords,\disonwords]$ and $\widetilde I_{p_\x,p_\y}$.

Let $ q $ be one of the three letters $a,b,c$ and consider the following diagram

\[
	\begin{tikzcd}
		&[-2em] & & 1 \arrow[d] &[-2em] \\
		1 \arrow[r] & \trivialwords \arrow[r, hook]\arrow[d, "I_{p_\x,p_\y}"] & \commsub{\disonwords} \arrow[r]\arrow[d, "\phi^{(q)} "]\arrow[dl, "\widetilde{ I}_{p_\x,p_\y} "] & \smallgroup \arrow[r]\arrow[d, hook] & 1 \\
		& \pi_1(\Conf_2(\CZ),(\hat p_\x,\hat p_\y)) \arrow[d, "\psi_*^{(q)}"] & \commsub{F_2^{(q)}} \arrow[dl, "I_{p_\x,p_\y}^{(q)}"]\arrow[d, hook] & F_2^{(a)}  \times F_2^{(b)} \times F_2^{(c)} \arrow[d]\arrow[dl, twoheadrightarrow] & \\
		&  \pi_1(\CC \setminus 0, \psi^{(q)}(\hat p_\x,\hat p_\y)) & F_2^{(q)} & \ZZ^2 \arrow[d]\\
		&&&1,
	\end{tikzcd}
\]

where
\begin{itemize}
	\item the map $\psi^{(q)}_*\colon \pi_1(\Conf_2(\CZ),(\hat p_\x,\hat p_y))\to\pi_1(\CZ,\psi^{(q)}(\hat p_\x,\hat p_y))$ is induced by the projection $ \psi^{(q)}\colon\Conf_2(\CZ) \to \CZ $ defined  by
	      \[
		      \psi^{(a)}(p_\x, p_\y) = p_\x,\qquad  \psi^{(b)}(p_\x, p_\y) = -p_\y, \qquad  \psi^{(c)}(p_\x, p_\y) = p_\y-p_\x;
	      \]

	\item the map $ \phi^{(q)}\colon [\disonwords,\disonwords]\to [F_2^{(q)},F_2^{(q)}] $ is the restriction of the projection $ \disonwords \rightarrow F_2^{(a)} \times F_2^{(b)} \times F_2^{(c)} \twoheadrightarrow F_2^{(q)} $ to the commutator subgroup;

	\item the map $ I_{p_\x,p_\y}^{(q)}\colon \commsub{F_2^{(q)} } \to \pi_1(\CC \setminus \{0\},\psi^{(q)}(\hat p_\x,\hat p_\y)) $ is defined

	      similarly to $ I_{p_\x,p_\y} $: to a word representing an element in $[F_2^{(q)},F_2^{(q)}]$ we associate a loop $\gamma_w$ based at $\psi^{(q)}(\hat p_\x,\hat p_\y)$ and obtained by concatenating several paths, one for every letter in $w$; more precisely, the generator $ q_1$ ($\overline{q}_1$, $q_2$, resp. $\overline{q}_2$,) in $w$ corresponds to a straight line of length $1$ going right (left, up, resp.~down).
\end{itemize}
\begin{lemma}
	The diagram described above commutes.
\end{lemma}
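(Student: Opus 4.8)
The plan is to check that each inner cell of the diagram commutes, and to observe that the rows and columns terminating in a $1$ are merely the appropriate (co)restrictions of the defining exact sequences $1\to\trivialwords\to\commsub{\disonwords}\to\cdots$ and $1\to\smallgroup\to F_2^{(a)}\times F_2^{(b)}\times F_2^{(c)}\to\ZZ^2\to 1$, which need no further comment. Most inner cells commute immediately: the upper-left triangle does so by construction, since $I_{p_\x,p_\y}$ is \emph{defined} to be $\widetilde I_{p_\x,p_\y}$ precomposed with $\trivialwords\hookrightarrow\commsub{\disonwords}$; the square formed by $\phi^{(q)}$, the quotient $\commsub{\disonwords}\to\smallgroup$, the inclusion $\smallgroup\hookrightarrow F_2^{(a)}\times F_2^{(b)}\times F_2^{(c)}$ and the projection onto $F_2^{(q)}$ commutes because, as recalled in \cref{rmk:alg-for-trivial-words}, the composite $\disonwords\to\smallgroup\hookrightarrow F_2^{(a)}\times F_2^{(b)}\times F_2^{(c)}\twoheadrightarrow F_2^{(q)}$ is exactly the homomorphism whose restriction to the commutator subgroup is $\phi^{(q)}$; and the remaining sub-squares involving only the structural maps among $\smallgroup$, $F_2^{(a)}\times F_2^{(b)}\times F_2^{(c)}$, $\ZZ^2$, $\commsub{F_2^{(q)}}$ and $F_2^{(q)}$ are literal sub-diagrams of the given data. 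Hence the only cell carrying content is the large square
\[
	\psi_*^{(q)}\circ\widetilde I_{p_\x,p_\y}\ =\ I_{p_\x,p_\y}^{(q)}\circ\phi^{(q)}\colon\commsub{\disonwords}\longrightarrow\pi_1\!\left(\CZ,\psi^{(q)}(\hat p_\x,\hat p_\y)\right),
\]
to be verified for each $q\in\{a,b,c\}$.

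To verify it I would fix $g\in\commsub{\disonwords}$, choose a word $w$ of length $\ell$ representing $g$, and let $(\gamma_\x,\gamma_\y)$ be the loop in $\Conf_2(\CZ)$ built from $w$ by the recipe defining $\widetilde I_{p_\x,p_\y}$; then $\psi_*^{(q)}(\widetilde I_{p_\x,p_\y}(g))$ is the homotopy class of $\psi^{(q)}\circ(\gamma_\x,\gamma_\y)$, which stays in $\CZ$ precisely because $(\gamma_\x,\gamma_\y)$ never meets the diagonal or $0$. On the other side, representing $\phi^{(q)}(g)$ by the word obtained from $w$ through the letter substitution defining $\phi^{(q)}$, the loop $I_{p_\x,p_\y}^{(q)}(\phi^{(q)}(g))$ is likewise a concatenation of one path per letter of $w$, so it suffices to compare the two loops segment by segment. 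Both recipes change the homotopy class of their output only by deleting constant or backtracking sub-paths and reparametrizing — this is the content of the well-definedness statements for $\widetilde I_{p_\x,p_\y}$ and (analogously) for $I_{p_\x,p_\y}^{(q)}$ — so it is enough to match the non-constant segments. The key inputs are the formulas $\psi^{(a)}(z_1,z_2)=z_1$, $\psi^{(b)}(z_1,z_2)=-z_2$, $\psi^{(c)}(z_1,z_2)=z_2-z_1$, which in particular give the base-point identifications $\psi^{(q)}(\hat p_\x,\hat p_\y)=\hat p_\x,\ -\hat p_\y,\ \hat p_\y-\hat p_\x$ for $q=a,b,c$.

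The letter-by-letter check then runs as follows. For $q=a$ one has $\psi^{(a)}\circ(\gamma_\x,\gamma_\y)=\gamma_\x$, and since $\phi^{(a)}$ deletes the $\y_i$ and renames $\x_i\mapsto a_i$, a letter $\x_i^\delta$ contributes on both sides the same unit segment (rightward for $i=1$, upward for $i=2$, oriented by $\delta$), while a letter $\y_i^\delta$ contributes a constant path on the $\widetilde I$-side and nothing on the other. For $q=b$ one has $\psi^{(b)}\circ(\gamma_\x,\gamma_\y)=-\gamma_\y$; a letter $\y_i^\delta$ moves $\gamma_\y$ by $-\delta$ (resp.\ $-i\delta$), hence moves $-\gamma_\y$ by $+\delta$ (resp.\ $+i\delta$), which is exactly the segment $I_{p_\x,p_\y}^{(b)}$ assigns to $b_i^\delta=\phi^{(b)}(\y_i^\delta)$, while the $\x_i^\delta$ again give only removable constant paths. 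For $q=c$ one has $\psi^{(c)}\circ(\gamma_\x,\gamma_\y)=\gamma_\y-\gamma_\x$, and now \emph{every} letter of $w$ contributes: a letter $\x_i^\delta$ moves $\gamma_\x$ by $+\delta$ (resp.\ $+i\delta$) and fixes $\gamma_\y$, hence moves $\gamma_\y-\gamma_\x$ by $-\delta$ (resp.\ $-i\delta$); a letter $\y_i^\delta$ moves $\gamma_\y$ by $-\delta$ (resp.\ $-i\delta$) and fixes $\gamma_\x$, hence \emph{also} moves $\gamma_\y-\gamma_\x$ by $-\delta$ (resp.\ $-i\delta$); and this is precisely the segment $I_{p_\x,p_\y}^{(c)}$ assigns to $\overline{c}_i^{\delta}$, which is the common value $\phi^{(c)}(\x_i^\delta)=\phi^{(c)}(\y_i^\delta)$. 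Matching segments and base points yields that the two loops are homotopic in all three cases, so the large square commutes and the lemma follows.

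I expect the only genuinely delicate point to be the sign bookkeeping in the case $q=c$: one must check that the subtraction $\psi^{(c)}(z_1,z_2)=z_2-z_1$ sends both the $x$-moves and the $y$-moves to the \emph{same} $\overline{c}$-moves, in accordance with $\phi^{(c)}$ identifying $\x_i$ with $\y_i$. Everything else is a matter of unwinding the definitions and invoking the well-definedness results already established.
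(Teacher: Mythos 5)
Your proposal is correct and follows essentially the same route as the paper: reduce everything to the commutativity of the left square and verify it generator by generator, with the only delicate point being the sign bookkeeping (the double sign inversion for $q=b$ and the fact that $\psi^{(c)}$ sends both $x$-moves and $y$-moves to the same $\overline{c}$-segments). Your letter-by-letter computation matches the paper's, and is if anything slightly more explicit in the $q=c$ case.
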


\begin{proof}
	The left triangle and the right pentagon commute by definition. We only need to prove that the left square commutes. If $g$ is an element in $[\disonwords,\disonwords]$ represented by the word $w$, then both the element $\psi_*^{(q)}\circ \widetilde I_{p_\x,p_\y}(g)$ and $ I^{(q)}_{p_\x,p_\y}\circ \phi^{(q)}(g)$ are represented by loops in $\CC$ defined piecewise, and each piece correspond to a generator in the word $w$. Therefore, it is enough to check that every generator of $ \disonwords $ induces the same path inside the two loops.

	Let $(\gamma_x,\gamma_y)$ be a pair of loops in $\CC$ given as in the definition of $\widetilde I_{p_\x,p_\y}(g)$. Then the generator $ \x_1 $ corresponds to a segment in $\gamma_\x$ of length $1$, parallel to the real axis and with the same orientation, while it corresponds to a constant path in $\gamma_\y$. By composing with $ \psi^{(q)} $, this path is sent to a segment that goes from a point $ z\in \CC $ to the point $ z+1 $, $ z $, $ z - 1 $ respectively when $ q $ is $ a $, $ b $, or $c$.
	Since $ \x_1 = a_1\inv{c}_1 $, we have that $ \phi^{(a)} (\x_1) = a_1, \phi^{(b)} (\x_1) = 1, \phi^{(c)} (\x_1) = \inv{c}_1$. After composing with $ I_{p_\x,p_\y}^{(q)} $, we obtain that the corresponding path is a segment that goes from a point $ z $ to $ z+1 $, $ z $, $ z-1 $ respectively, in the same way as above.

	The check for $\x_2$ is completely analogous. For $\y_1$ and $\y_2$ it is also similar, but one has to be careful that both $ \widetilde{I}_{p_\x,p_\y} $ and $ \psi_*(q) $ invert the sign, so the composition of the two maps ends up having the same sign.
\end{proof}

\begin{lemma}\label{lemma:untie-braids}
	Let $ w \in \trivialwords $ be a word representing the trivial element of $ \smallgroup $. Then $ \psi_*^{(q)} (I_{p_\x,p_\y}(w)) = 1 $ for all Gaussian integers $p_\x,p_\y$.
\end{lemma}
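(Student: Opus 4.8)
The plan is to deduce the statement directly from the commutative diagram established above, the only nontrivial input being that the horizontal map $\phi^{(q)}$ annihilates $w$.

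First I would record that, since $w$ represents the trivial element of $\smallgroup=\K$ and $\K$ sits inside $\tripr$, the image of $w$ under the projection $\disonwords\to F_2^{(a)}\times F_2^{(b)}\times F_2^{(c)}\twoheadrightarrow F_2^{(q)}$ is trivial. Concretely this is exactly the content of \cref{rmk:alg-for-trivial-words}: deleting all occurrences of $\y_1,\y_2$ (when $q=a$), deleting all occurrences of $\x_1,\x_2$ (when $q=b$), or identifying $\y_i$ with $\x_i$ (when $q=c$) produces a word that is trivial in the free group $F_2^{(q)}$. Since $w\in\trivialwords\subseteq\commsub{\disonwords}$, the element $\phi^{(q)}(w)\in\commsub{F_2^{(q)}}$ is therefore the trivial element, and not merely some element of the commutator subgroup on which $I_{p_\x,p_\y}^{(q)}$ is defined; this is the one place where the hypothesis ``$w$ trivial in $\smallgroup$'' is genuinely used.

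Next I would chase the diagram. By definition $I_{p_\x,p_\y}$ is the composite of the inclusion $\trivialwords\hookrightarrow\commsub{\disonwords}$ with $\widetilde I_{p_\x,p_\y}$, so $I_{p_\x,p_\y}(w)=\widetilde I_{p_\x,p_\y}(w)$ when we view $w$ inside $\commsub{\disonwords}$. Applying $\psi_*^{(q)}$ and using the commutativity of the left square of the diagram gives
\[
	\psi_*^{(q)}\big(I_{p_\x,p_\y}(w)\big)=\psi_*^{(q)}\big(\widetilde I_{p_\x,p_\y}(w)\big)=I_{p_\x,p_\y}^{(q)}\big(\phi^{(q)}(w)\big).
\]
By the first step $\phi^{(q)}(w)$ is trivial, and $I_{p_\x,p_\y}^{(q)}$ sends the trivial element of $\commsub{F_2^{(q)}}$ to the trivial loop, so the right-hand side is trivial. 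As $p_\x,p_\y$ were arbitrary Gaussian integers, this proves the lemma.

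There is essentially no obstacle here: all of the real work has gone into constructing the maps $\widetilde I_{p_\x,p_\y}$ and $I_{p_\x,p_\y}^{(q)}$ and into checking that the big diagram commutes, which is already done. The only point that deserves a moment of care is the one flagged above, namely that $\phi^{(q)}$ actually sends a word trivial in $\smallgroup$ to the \emph{identity} of $F_2^{(q)}$ and not merely into $\commsub{F_2^{(q)}}$; this is immediate from the projection formulas $\x_i\mapsto a_i\inv c_i$, $\y_i\mapsto b_i\inv c_i$ together with \cref{rmk:alg-for-trivial-words}.
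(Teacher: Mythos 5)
Your proof is correct and follows the same route as the paper: both arguments reduce to observing that $\phi^{(q)}(w)$ is trivial (you via \cref{rmk:alg-for-trivial-words}, the paper via exactness of the top row plus the embedding $\smallgroup\hookrightarrow\tripr$, which is the same fact) and then chasing the commutative diagram through the left square. No issues.
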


\begin{proof}
	The image of $w$ inside $ F_2^{(a)} \times F_2^{(b)} \times F_2^{(c)} $ is trivial, since the top row is exact; as the diagram commutes $ \phi^{(q)}(w) $ is trivial. Using again that the diagram commutes, implies the assertion.
\end{proof}

Summing up, given two Gaussian integers $p_\x,p_y$, we can associate to every word $w\in \trivialwords$ an element $I_{p_\x,p_\y}(w)$ belonging to $ \pi_1(\Conf_2(\CZ),(\hat p_\x,\hat p_y))$. We are interested in understanding for which $p_\x,p_\y$ a word $w$ is sent to a (non)trivial element of $ \pi_1(\Conf_2(\CC \setminus 0),(\hat p_\x,\hat p_y)) $.

Recall that the fundamental group of $ \Conf_k(\CC) $ with base points $ p_1, \dots, p_k \in \CC $ is isomorphic to the pure braid group $ \PB_{k} $ on $k$ strands (see, e.g., \cite{Artin1950braid}).
\begin{remark}
	This isomorphism would be canonical if $ p_1 < \dots < p_k $ were lying on the real axis; otherwise, one would have to choose a path that moves the base points to the canonical ones. We can still think of elements of the fundamental group as pure braids, in the sense that they are naturally $k$-uples of strands in $ \CC \times [0,1] $, up to isotopy, where the $i$-th strand has fixed endpoints $ p_i \times 0 $ and $ p_i \times 1 $.
\end{remark}

We observe that $ \Conf_k(\CC \setminus \{0\})$ is a deformation retract of $ \Conf_{k+1}(\CC) $, so $ I_{p_x,p_y} $ defines a morphism
\[\trivialwords\to \pi_1(\Conf_3(\CC), (O, \hat p_\x, \hat p_\y)) \]
in which a word $w$ is sent to the pure braid on three strands based in $O,\hat p_\x,\hat p_\y$. The first strand is the constant one based in the origin $O$ of the complex plane $\C$; the other two strands are represented by $\gamma_\x,\gamma_y$ as in the definition of $I_{p_\x,p_\y}(w)$.

This can be summarized by the following commutative diagram:
\begin{equation*}\label{diag:braid-group}
	\begin{tikzcd}
		\trivialwords \arrow[r, "I_{p_\x,p_\y}"]     &\pi_1(\Conf_2(\CZ),(\hat p_\x,\hat p_y)) \arrow[r, "\cong"]\arrow[d,"\psi_*^{(q)}"]   &\pi_1(\Conf_3(\CC), (O, \hat p_\x, \hat p_\y))  \arrow[d, "\widetilde\psi^{(q)}_* "]   \\
		&\pi_1(\Conf_1(\CZ),\psi^{(q)}(\hat p_\x,\hat p_y)) \arrow[r, "\cong"]             &\pi_1(\Conf_2(\CC), \widetilde \psi^{(q)}(O, \hat p_\x, \hat p_\y))
	\end{tikzcd}
\end{equation*}
where the horizontal isomorphisms are given by adding the constant strand based at $O$, and $\widetilde \psi^{(q)} \colon \Conf_3 \to \Conf_2$ is the map that forgets about one of the three points: more precisely, $\widetilde\psi^{(a)}, \widetilde\psi^{(b)}, \widetilde\psi^{(c)}$ forgets about the second, third, and first point respectively.

Given a Gaussian integer $p\in \ZZ[i]$, we denote by $\norm{p}=\max\{\abs{\Re p}, \abs{\Im p}\}$.
\begin{lemma}\label{lemma:far-away-equal-untangled}
	Suppose $ w \in \trivialwords $ is a word of length $ \ell $ and let $p_\x,p_\y$ be two Gaussian integers. If $\max\{\norm{p_\x},\norm{p_y}\} > \ell$, then $ I_{p_\x,p_\y}(w) $ is trivial.
\end{lemma}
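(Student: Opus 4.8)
First I would reduce to the case $\norm{p_\x} > \ell$; the case $\norm{p_\y} > \ell$ is handled symmetrically, exchanging the roles of the two strands. Since $w$ has length $\ell$, at most $\ell$ of its letters lie in $\{\x_1^{\pm 1},\x_2^{\pm 1}\}$, so $\gamma_\x$ is a concatenation of at most $\ell$ unit segments and is therefore contained in the closed box $B$ of sup-radius $\ell$ about $\hat p_\x$; likewise $\gamma_\y$ is a loop of length at most $\ell$. A direct check on the four cases for which coordinate of $p_\x$ exceeds $\ell$ in absolute value shows $\norm{\hat p_\x} > \ell$, so $B$ is convex and disjoint from $0$, and in particular $|\hat p_\x| \ge \norm{\hat p_\x} > \ell$.

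The main step is to contract the first strand. Since $B$ is convex and $0 \notin B$, the straight-line homotopy $\gamma_\x^{(s)} = (1-s)\gamma_\x + s\hat p_\x$ shrinks $\gamma_\x$ to the constant path $\hat p_\x$ inside $B \subseteq \CZ$; lifting it through the fibration $\Conf_2(\CZ) \to \CZ$ given by the first coordinate (whose fibre over $z_0$ is $\CC \setminus \{0, z_0\}$) deforms the loop $(\gamma_\x, \gamma_\y)$, inside $\Conf_2(\CZ)$, to a loop $(\hat p_\x, \eta)$ with $\eta$ a loop in $\CC \setminus \{0, \hat p_\x\}$ based at $\hat p_\y$. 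The point is that one can carry out this lift modifying $\gamma_\y$ only by arbitrarily short detours around the shrinking copy of $\gamma_\x$ — this uses that the collision locus $\{(s,t) : \gamma_\x^{(s)}(t) = \gamma_\y(t)\}$ is compact and disjoint from $\{s=0\}$ (where $\gamma_\x(t) \ne \gamma_\y(t)$) and from $\{s=1\}$ (where the value is $\hat p_\x$, which $\gamma_\y$ never meets, since every segment of $\gamma_\y$ has a coordinate with fractional part $\tfrac13$ whereas both coordinates of $\hat p_\x$ have fractional part $\tfrac23$) — so that, given $\epsilon>0$, we may take $\mathrm{length}(\eta) \le \ell + \epsilon$. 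Consequently $I_{p_\x,p_\y}(w) = \iota_*([\eta])$, where $\iota\colon \CC \setminus \{0,\hat p_\x\} \hookrightarrow \Conf_2(\CZ)$, $z \mapsto (\hat p_\x, z)$, is the fibre inclusion; note that $\iota_*$ is injective.

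Now I conclude by combining two facts. First, \cref{lemma:untie-braids} with $q = b$ and $q = c$ gives $\psi^{(b)}_*(I_{p_\x,p_\y}(w)) = \psi^{(c)}_*(I_{p_\x,p_\y}(w)) = 1$; since $\psi^{(b)} \circ \iota$ sends $z$ to $-z$ and $\psi^{(c)} \circ \iota$ sends $z$ to $z - \hat p_\x$, these equalities say exactly that the winding numbers of $\eta$ about $0$ and about $\hat p_\x$ are both zero. Second, pick $\epsilon < |\hat p_\x| - \ell$; then $\eta$ (having length $\le \ell + \epsilon$ and passing through $\hat p_\y$) lies in the closed disc $D$ of radius $(\ell+\epsilon)/2$ about $\hat p_\y$, whose diameter $\ell+\epsilon$ is strictly less than $|\hat p_\x| = \mathrm{dist}(0, \hat p_\x)$; hence $D$ contains at most one of the two punctures $0, \hat p_\x$. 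Therefore $\eta$ is a loop in a disc, or in a once-punctured disc, so $[\eta]$ lies in a cyclic subgroup of $\pi_1(\CC\setminus\{0,\hat p_\x\})$ generated by a small loop around the puncture inside $D$ (if any); as its winding number about that puncture is zero, $[\eta] = 1$, and thus $I_{p_\x,p_\y}(w) = \iota_*(1) = 1$.

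The step I expect to be the main obstacle is the controlled lift in the second paragraph: making precise that the contraction of $\gamma_\x$ lifts to a homotopy of $(\gamma_\x,\gamma_\y)$ whose $\gamma_\y$-component gains only length $\epsilon$. This is where the combinatorial structure (all moves are unit steps, and $\hat p_\x$ avoids the trajectory of $\gamma_\y$ for fractional-part reasons) genuinely enters; everything else is routine covering-space and plane-topology bookkeeping.
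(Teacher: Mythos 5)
Your overall strategy --- straighten the $\gamma_\x$-strand, reduce to a single loop $\eta$ in the twice-punctured plane, and kill it using the two vanishing winding numbers supplied by \cref{lemma:untie-braids} --- is sound, and it is a basepoint-level version of the paper's argument (which instead finds a vertical line separating $\gamma_\x$ from $\{0\}\cup\gamma_\y$, or $\{0\}$ from $\gamma_\x\cup\gamma_\y$, splits off the isolated strand, and applies \cref{lemma:untie-braids} to the remaining two-strand braid). However, the step you yourself flag as the main obstacle is a genuine gap: the lifted homotopy cannot in general be arranged so that $\eta$ has length $\le \ell+\epsilon$. Compactness of the collision locus only locates the collisions in a compact subset of $(0,1)\times S^1$; it says nothing about how far $\gamma_\y$ must be displaced to dodge them. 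Concretely, if the $\gamma_\y$-strand crosses the annulus swept out by the contraction (i.e.\ $\gamma_\y(t)$ lies on the segment $[\gamma_\x(t),\hat p_\x]$ for various $t$, with crossings of both signs so that the total linking number is still $0$), then combing $\gamma_\x$ straight forces $\eta$ to make excursions winding around the point $\hat p_\x$, one per crossing, each of length comparable to the distance from $\gamma_\y$ to $\hat p_\x$ --- which can be of order $\ell$, not $\epsilon$. (You cannot instead pick a short representative of $[\eta]$ by hand, since that would presuppose the triviality you are trying to prove.) So your disc $D$ of diameter $\ell+\epsilon$ need not contain $\eta$, and the final step collapses.

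The repair is a dichotomy, which is essentially the paper's separating-line argument in disguise. Since each letter of $w$ moves exactly one of the two points, $\operatorname{length}(\gamma_\x)+\operatorname{length}(\gamma_\y)\le\ell$, and both loops are closed; hence $\gamma_\x$ and the cone $\bigcup_t[\gamma_\x(t),\hat p_\x]$ lie in the sup-norm ball $B'$ of radius $\ell/2$ about $\hat p_\x$. If $\gamma_\y\cap B'=\emptyset$, there are no collisions at all, so $\eta=\gamma_\y$ exactly; its convex hull has diameter at most $\ell/2$ and therefore misses $\hat p_\x$, and the vanishing of the winding number about $0$ finishes. If instead $\gamma_\y$ meets $B'$, then all of $\gamma_\y$ lies in the sup-norm ball $B$ of radius $\ell$ about $\hat p_\x$, which does not contain $0$; the contraction and its lift can be kept inside $B$, so $(\gamma_\x,\gamma_\y)$ is a loop in $\Conf_2(B)\cong\Conf_2(\CC)$, whose fundamental group is $\ZZ$ detected by the linking number of the two strands, and this vanishes by \cref{lemma:untie-braids}. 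Either way the conclusion holds, but you must track \emph{which} puncture the confining region avoids in each case rather than rely on a single uniform length bound for $\eta$.
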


\begin{proof}
	We prove the statement only in the case $ \Re(p_x) > \ell $, being the other cases analogous.

	Let $\gamma_\x, \gamma_\y$ be the two loops in the definition of $ I_{p_\x,p_\y}(w)$. By hypothesis, these two loops have length at most $ \ell $. Thus
	\[
		\abs{\Re\left( \gamma_\x(t)-\gamma_\x\left(t'\right) \right)}, \abs{\Re\left( \gamma_\y(t)-\gamma_\y\left(t'\right) \right)} \le \ell/2
	\]
	for all $ t, t' $.
	Thus, we can find a vertical line between the origin and $ p_\x $, which  does not intersect any of $ \gamma_\x $ and $ \gamma_\y $.
	In particular, if we consider the braid of $\pi_1(\Conf_3(\CC), (O, \hat p_\x, \hat p_\y)) $ defined by $ I_{p_\x,p_\y}(w) $, then the strand based in $p_\y$ can be knotted with at most one of the other two strands, say with the one based in $O$. More precisely, we can isotope the braid such that the strand based in $ p_x $ is vertical, i.e.~$ \gamma_x $ is constant, and $ \gamma_y $ stays inside a (topological) ball that contains the origin and does not intersect $ \gamma_x $.

	However, the braid $\widetilde\psi^{(b)} (I_{p_\x,p_\y}(w))$, obtained by forgetting the  strand based in $p_\x$, is trivial by \cref{lemma:untie-braids}, implying that the strand based in $p_\y$ and the one based in $O$ are unknotted. So we conclude that $I_{p_x,p_y}(w)$ is trivial.
\end{proof}

We denote by $\theta_\x\colon \disonwords \to \ZZ[i]$ the homomorphism sending $ x_1 \mapsto 1$, $x_2 \mapsto i$, and $y_1,y_2 \mapsto 0 $. Similarly, we denote by $ \theta_\y\colon \disonwords \to \ZZ[i] $ the homomorphism that sends $y_1 \mapsto -1$, $y_2 \mapsto -i$, and $x_1,x_2 \mapsto 0$.

\begin{lemma}\label{lemma:conj}
	Let $ \alpha = \alpha(x_1,x_2,y_1,y_2) \in \disonwords$ be any word, $w \in \trivialwords$ be a word representing the trivial element of $ \smallgroup $, and $ p_\x,p_\y$ two Gaussian integers.
	Then $ I_{p_\x,p_\y}(w) $ is trivial if and only if $ I_{p_\x+\theta_\x(\alpha),p_\y+\theta_\y(\alpha)}(\inv\alpha w \alpha) $ is.
\end{lemma}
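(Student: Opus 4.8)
The plan is to recognise that conjugating $w$ by $\alpha$, together with the accompanying translation of the base points, is nothing but a change-of-basepoint isomorphism between fundamental groups, and that such isomorphisms carry the trivial element to the trivial element and nothing else. First I would record the elementary bookkeeping behind the construction of $\widetilde I_{p_\x,p_\y}$: if one reads the letters of an arbitrary word $\beta\in\disonwords$ one at a time according to the prescribed moves, starting the two strands at points $z_\x,z_\y\in\CC$, then the resulting path in $\CC^2$ ends with its coordinates at $z_\x+\theta_\x(\beta)$ and $z_\y+\theta_\y(\beta)$ --- this is immediate from the definitions of $\theta_\x,\theta_\y$ and of the four elementary moves. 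Applying this with $\beta=\alpha$, and writing $p_\x'=p_\x+\theta_\x(\alpha)$, $p_\y'=p_\y+\theta_\y(\alpha)$, we get $\hat p_\x'=\hat p_\x+\theta_\x(\alpha)$ and $\hat p_\y'=\hat p_\y+\theta_\y(\alpha)$; since $\theta_\x(\alpha),\theta_\y(\alpha)\in\ZZ[i]$, the three numbers $\hat p_\x',\hat p_\y',\hat p_\x'-\hat p_\y'$ still have no integral coordinate, so $I_{p_\x',p_\y'}=I_{p_\x+\theta_\x(\alpha),\,p_\y+\theta_\y(\alpha)}$ is defined.

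Next I would introduce the connecting path. Let $\gamma^\alpha=(\gamma^\alpha_\x,\gamma^\alpha_\y)$ be the path obtained by reading the letters of $\alpha$ starting from $(\hat p_\x,\hat p_\y)$, using exactly the moves from the definition of $\widetilde I$ (now allowing $\alpha$ to be an arbitrary word, not just an element of the commutator subgroup). The same argument used to prove that $\widetilde I_{p_\x,p_\y}$ is well-defined applies verbatim --- it only uses that $\hat p_\x,\hat p_\y,\hat p_\x-\hat p_\y$ have non-integral coordinates and that every intermediate position of the two strands differs from these by a Gaussian integer --- so $\gamma^\alpha$ is a path in $\Conf_2(\CZ)$, which by the previous paragraph runs from $(\hat p_\x,\hat p_\y)$ to $(\hat p_\x',\hat p_\y')$. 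Moreover, since $\inv\alpha$ is obtained from $\alpha$ by reversing the order of the letters and inverting each of them, reading $\inv\alpha$ starting from the endpoint $(\hat p_\x',\hat p_\y')$ of $\gamma^\alpha$ simply retraces $\gamma^\alpha$ backwards, i.e.~produces the reverse path $\overline{\gamma^\alpha}$.

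The main step is then to compute $I_{p_\x',p_\y'}(\inv\alpha w\alpha)$ using the literal (possibly unreduced) representative $\inv\alpha\cdot w\cdot\alpha$, which is legitimate since $\widetilde I$, and hence $I$, is independent of the chosen representative. Reading this word from $(\hat p_\x',\hat p_\y')$, the prefix $\inv\alpha$ produces $\overline{\gamma^\alpha}$, ending at $(\hat p_\x,\hat p_\y)$; then, because $w\in\trivialwords\subseteq[\disonwords,\disonwords]$ forces $\theta_\x(w)=\theta_\y(w)=0$, reading $w$ from $(\hat p_\x,\hat p_\y)$ returns to $(\hat p_\x,\hat p_\y)$ and traces out the loop $I_{p_\x,p_\y}(w)$; finally the suffix $\alpha$ produces $\gamma^\alpha$, closing the loop at $(\hat p_\x',\hat p_\y')$. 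Up to the harmless reparametrization of $[0,1]$ coming from the different word lengths, this yields
\[
	I_{p_\x',p_\y'}(\inv\alpha w\alpha)\;=\;\overline{\gamma^\alpha}\cdot I_{p_\x,p_\y}(w)\cdot\gamma^\alpha,
\]
which is exactly the image of $I_{p_\x,p_\y}(w)$ under the change-of-basepoint isomorphism $\pi_1(\Conf_2(\CZ),(\hat p_\x,\hat p_\y))\xrightarrow{\;\cong\;}\pi_1(\Conf_2(\CZ),(\hat p_\x',\hat p_\y'))$ determined by the path $\gamma^\alpha$. An isomorphism sends the trivial element to the trivial element and is injective, so $I_{p_\x',p_\y'}(\inv\alpha w\alpha)$ is trivial if and only if $I_{p_\x,p_\y}(w)$ is, which is the assertion.

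The only point that genuinely requires care is the claim that $\gamma^\alpha$ never leaves $\Conf_2(\CZ)$ --- equivalently, that feeding an arbitrary word to two strands started at base points of this special shifted form never makes the strands collide with each other or with $0$. As noted above, this is exactly the computation already carried out in the well-definedness lemma for $\widetilde I$, so no new idea is needed; everything else is elementary path-concatenation bookkeeping.
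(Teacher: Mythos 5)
Your proof is correct and is essentially the same argument the paper gives: the loop for $\inv\alpha w\alpha$ based at the shifted points is the conjugate of the loop for $w$ by the path traced out by $\alpha$, i.e.\ a change-of-basepoint isomorphism, which preserves (non)triviality. The paper states this in one sentence; your version just makes explicit the bookkeeping (endpoints via $\theta_\x,\theta_\y$, and that the connecting path stays in $\Conf_2(\CZ)$), which is a faithful elaboration rather than a different route.
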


\begin{proof}
	By definition, one can obtain the two loops representing the element $I_{p_\x,p_\y}(w)$ and the element $ I_{p_\x+\theta_\x(\alpha),p_\y+\theta_\y(\alpha)}(\inv\alpha w \alpha) $ from each other by conjugating by some path. Thus, if one of the two is trivial, so is the other.
\end{proof}
\begin{definition}
	For every $ w \in \trivialwords $, define the \emph{braid-invariant} of $w$ as
	\[I(w) \coloneq \#\left\{(p_\x,p_\y) \in \ZZ[i]^2 \suchthat I_{p_\x,p_\y}(w) \text{ is nontrivial} \right\}.\]
\end{definition}

The following are useful properties of the braid-invariant.
\begin{lemma}\label{lemma:summability-braid-inv}
	If $w_1,w_2\in \trivialwords$ and $\alpha(\x_1,\x_2,\y_1,\y_2)$ is any word in $\disonwords$, then
	\begin{enumerate}
		\item  $\I(w_1)$ is finite;
		\item $\I(\inv \alpha w_1 \alpha)=\I(w_1)$;
		\item $\I(w_1w_2)\leq \I(w_1)+\I(w_2)$.
	\end{enumerate}
\end{lemma}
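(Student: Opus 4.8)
The plan is to derive all three statements as bookkeeping consequences of the properties of $I_{p_\x,p_\y}$ already established. Throughout, for $w\in\trivialwords$ I would write $B(w)=\{(p_\x,p_\y)\in\ZZ[i]^2 : I_{p_\x,p_\y}(w)\text{ is nontrivial}\}$, so that $\I(w)=\#B(w)$ by definition. For (1), let $\ell$ be the length of $w_1$: by \cref{lemma:far-away-equal-untangled}, $I_{p_\x,p_\y}(w_1)$ is trivial whenever $\max\{\norm{p_\x},\norm{p_\y}\}>\ell$, so $B(w_1)$ is contained in the set of pairs with $\norm{p_\x}\le\ell$ and $\norm{p_\y}\le\ell$, which has at most $(2\ell+1)^4$ elements; in particular $\I(w_1)$ is finite.

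For (2), \cref{lemma:conj} says precisely that, for every $\alpha\in\disonwords$, the braid $I_{p_\x,p_\y}(w_1)$ is trivial if and only if $I_{p_\x+\theta_\x(\alpha),\,p_\y+\theta_\y(\alpha)}(\inv\alpha w_1\alpha)$ is trivial. The map $(p_\x,p_\y)\mapsto(p_\x+\theta_\x(\alpha),\,p_\y+\theta_\y(\alpha))$ is a translation of $\ZZ[i]^2$, hence a bijection, and by \cref{lemma:conj} it restricts to a bijection from $B(w_1)$ onto $B(\inv\alpha w_1\alpha)$. Taking cardinalities yields $\I(\inv\alpha w_1\alpha)=\I(w_1)$.

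For (3), I would use that for each fixed $(p_\x,p_\y)$ the map $I_{p_\x,p_\y}$ is a group homomorphism on $\trivialwords$, being the restriction of the homomorphism $\widetilde I_{p_\x,p_\y}$ along the inclusion $\trivialwords\hookrightarrow\commsub{\disonwords}$; all three of $w_1$, $w_2$, $w_1w_2$ are mapped into $\pi_1(\Conf_2(\CZ),(\hat p_\x,\hat p_\y))$ with the \emph{same} base point, since that base point depends only on $(p_\x,p_\y)$ and not on the word. Hence $I_{p_\x,p_\y}(w_1w_2)=I_{p_\x,p_\y}(w_1)\,I_{p_\x,p_\y}(w_2)$, so if both $I_{p_\x,p_\y}(w_1)$ and $I_{p_\x,p_\y}(w_2)$ are trivial then so is $I_{p_\x,p_\y}(w_1w_2)$; equivalently $B(w_1w_2)\subseteq B(w_1)\cup B(w_2)$, and therefore $\I(w_1w_2)\le\#B(w_1)+\#B(w_2)=\I(w_1)+\I(w_2)$. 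None of the three steps presents a genuine obstacle; the only point requiring care is in (3), namely checking that $\widetilde I_{p_\x,p_\y}$ is being invoked as an honest homomorphism (it is, by construction) and that no base-point change intervenes when multiplying the two loops, which holds because the base point is a function of $(p_\x,p_\y)$ alone.
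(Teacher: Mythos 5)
Your proposal is correct and follows essentially the same route as the paper: (1) via \cref{lemma:far-away-equal-untangled}, (2) via \cref{lemma:conj} together with the translation bijection on $\ZZ[i]^2$, and (3) via the fact that $I_{p_\x,p_\y}$ is a homomorphism, so the set of nontriviality points of $w_1w_2$ is contained in the union of those of $w_1$ and $w_2$. Your write-up merely spells out the bookkeeping that the paper leaves implicit.
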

\begin{proof}
	The first item follows from \Cref{lemma:far-away-equal-untangled}, while the second is a direct consequence of \cref{lemma:conj}.
	The third item is due to the fact that $ I_{p_\x,p_\y}$ is a homomorphism: if $ I_{p_\x,p_\y}(w_1) $  and $ I_{p_\x,p_\y}(w_2) $ are trivial for some Gaussian integers $p_\x,p_\y$, then $ I_{p_\x,p_\y}(w_1w_2) $ is also trivial.
\end{proof}

\subsubsection{Braid invariant and Dehn function}\label{sssec:braids}
Let
\[
	w_n= [\x_1^n,\y_2^n][\x_2^n,\y_1^n].
\]
In this section we show that $\I(w_n)$ has quartic growth in $n$, while, clearly, the number of letters in $w_n$ grows linearly in $n$. By the end of this subsection we will show that this is enough to prove that the Dehn function of $\K$ is at least quartic (see \cref{prop:dehn=braid}).

In order to compute $ I_{p_x, p_y}(w_n) $ explicitly, we fix an isomorphism
\[
	\pi_1(\Conf_3(\CC), (O, \hat p_x, \hat p_y)) \isom \PB_3
\]
by projecting the braids, seen as paths inside $ \CC \times [0,1] $, to $ \RR \times [0,1] $, while keeping the information about over- and under-crossings.

Since $ \PB_3 $ is a finite-index subgroup of $ \rm B_3 $, we may canonically embed the fundamental group of $ \Conf_3(\CC) $ into $ \rm B_3 $, which has the following finite presentation:
\[\left\langle\sigma_1,\sigma_2 \ |\ \sigma_1\sigma_2\sigma_1= \sigma_2\sigma_1\sigma_2 \right\rangle,\]
where $ \sigma_i $ exchanges the $ i $-th and $ (i+1) $-th strand, as in \cref{fig:s1s2}.

\begin{figure}
	\centering
	\begin{subfigure}{0.45\textwidth}
		\centering
		\scalebox{-1}[1]{\includegraphics[height=3cm]{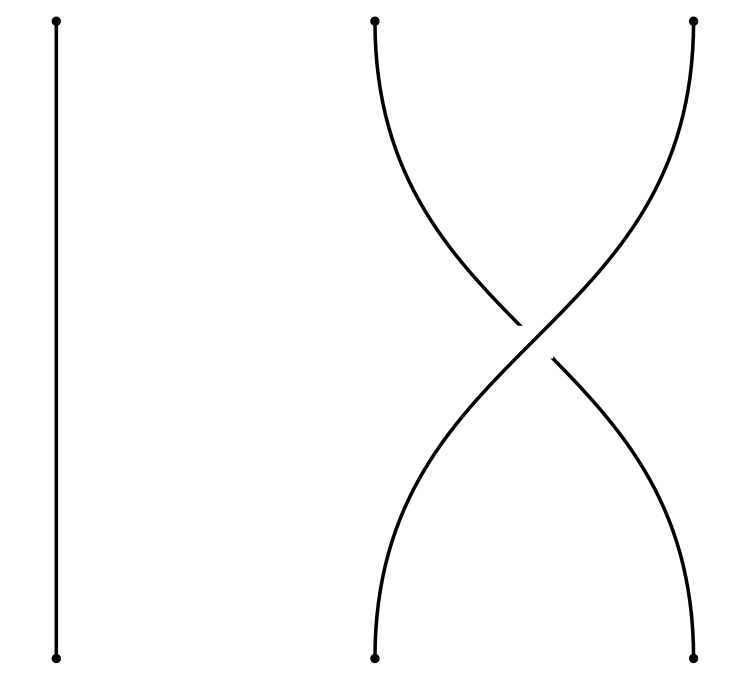}}
	\end{subfigure}
	\begin{subfigure}{0.45\textwidth}
		\centering
		\raisebox{\depth}{\scalebox{1}[-1]{\includegraphics[height=3cm]{pictures/braid-generator}}}
	\end{subfigure}
	\caption{Paths representing the elements $\sigma_1$ and $\sigma_2$.}
	\label{fig:s1s2}
\end{figure}

\begin{lemma}\label{lemma:points-of-nontriviality-for-wn}
	If
	\begin{equation*}
		\begin{cases}
			0\leq\Re({p_\y-p_\x}),\Im({p_\y-p_x})< n, \\
			\Re{(p_\x)},\Im{(p_\x)}\leq 0,            \\
			\Re{(p_\y)},\Im{(p_\y)}\geq 0,            \\
		\end{cases}
	\end{equation*}
	then, by using the notation just introduced for $\PB_3\subset \rm B_3$, we get that
	\[I_{p_\x,p_\y}(w_n)=\sigma_1\sigma_2^2\overline{\sigma}_1\sigma_2\overline{\sigma}_1^2\overline{\sigma}_2\]%
	is a nontrivial element of $\PB_3$.
\end{lemma}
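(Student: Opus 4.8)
The plan is to compute directly the loop $(\gamma_\x,\gamma_\y)$ that $I_{p_\x,p_\y}$ associates to $w_n$, read off the resulting element of $\pi_1(\Conf_3(\CC),(O,\hat p_\x,\hat p_\y))$ from its projection to $\RR\times[0,1]$, and then recognise it as a nontrivial commutator inside the free part of $\PB_3$.

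The first step is to record the geometry forced by the hypotheses. Under the stated assumptions $\hat p_\x$ has both coordinates $\le-\tfrac13$ while $\hat p_\y$ has both coordinates $\ge\tfrac13$, so $\hat p_\x$, $O$ and $\hat p_\y$ occur on the real axis in this left--to--right order; this is precisely the ordering used to identify $\pi_1$ with $\PB_3\subset\rm B_3$. Writing $w_n=\x_1^n\y_2^n\x_1^{-n}\y_2^{-n}\cdot\x_2^n\y_1^n\x_2^{-n}\y_1^{-n}$, the associated loop splits into eight \emph{phases}, in each of which exactly one of the two points sweeps along a straight segment of length $n$ while the other is parked. The role of the inequalities $0\le\Re(p_\y-p_\x),\Im(p_\y-p_\x)\le n-1$ and $\Re(p_\x),\Im(p_\x)\le 0\le\Re(p_\y),\Im(p_\y)$ is that they pin down, uniformly in $p_\x,p_\y$, exactly which strands cross in each phase and with which sign: when $\gamma_\x$ sweeps right by $n$ it ends strictly to the right of $\hat p_\y$ (because $n>\Re(p_\y-p_\x)$) and passes below both the $O$--strand and the $\hat p_\y$--strand; symmetrically $\gamma_\y$ sweeping down by $n$ ends below $\hat p_\x$; and the vertical segments swept by $\gamma_\y$ (resp.\ $\gamma_\x$) stay at real part $\ge\tfrac13>0$ (resp.\ $\le-\tfrac13<0$), hence meet neither $O$ nor the other, parked, strand. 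I would check these comparisons and the analogous ones in each phase; after that the computation is pure bookkeeping.

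Carrying out the bookkeeping, the four ``horizontal'' phases $\x_1^{\pm n},\y_1^{\pm n}$ each contribute two crossings and the four ``vertical'' phases $\x_2^{\pm n},\y_2^{\pm n}$ contribute none. Recording over/under at each crossing from the imaginary parts at that instant and using the crossing conventions of \cref{fig:s1s2}, one reads off $I_{p_\x,p_\y}(w_n)=\sigma_1\sigma_2^2\overline{\sigma}_1\sigma_2\overline{\sigma}_1^2\overline{\sigma}_2$. (Equivalently, one may use that $\widetilde I_{p_\x,p_\y}$ is a homomorphism on $\commsub{\disonwords}$, split $I_{p_\x,p_\y}(w_n)=\widetilde I_{p_\x,p_\y}([\x_1^n,\y_2^n])\cdot\widetilde I_{p_\x,p_\y}([\x_2^n,\y_1^n])$, and compute the two commutator braids one at a time.)

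It remains to check this element is nontrivial in $\PB_3$. Its image in $S_3$ is $(12)(23)^2(12)(23)(12)^2(23)=1$, so it is indeed a pure braid. Put $A_{12}=\sigma_1^2$, $A_{23}=\sigma_2^2$, $A_{13}=\sigma_2\sigma_1^2\overline{\sigma}_2$, and recall that $\langle A_{13},A_{23}\rangle$ is free of rank $2$ and is a complement to the centre $\langle A_{12}A_{13}A_{23}\rangle$ of $\PB_3$. Using the braid relation in the form $\sigma_1\sigma_2\overline{\sigma}_1=\overline{\sigma}_2\sigma_1\sigma_2$ one rewrites $\sigma_1\sigma_2^2\overline{\sigma}_1\sigma_2\overline{\sigma}_1^2\overline{\sigma}_2=\overline{\sigma}_2\,\sigma_1^2\sigma_2^2\overline{\sigma}_1^2\,\overline{\sigma}_2=\overline{\sigma}_2\,(A_{12}A_{23}A_{12}^{-1})\,\overline{\sigma}_2$; then a short further manipulation (write the outer $\overline{\sigma}_2$'s as $A_{23}^{-1}$ times conjugation by $\sigma_2$, and use $\sigma_2 A_{12}\overline{\sigma}_2=A_{13}$, $\sigma_2 A_{23}\overline{\sigma}_2=A_{23}$) identifies it with $A_{23}^{-1}A_{13}A_{23}A_{13}^{-1}=[A_{23}^{-1},A_{13}]$, a nontrivial element of the free group $\langle A_{13},A_{23}\rangle$. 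The only delicate part of the argument is the phase-by-phase crossing bookkeeping in the middle paragraph; the inequalities that set it up and the final algebraic identification are both routine.
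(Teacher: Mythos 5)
Your proposal is correct, and it is worth separating its two halves. The first half---the phase-by-phase computation of the braid word, with the hypotheses on $p_\x,p_\y$ used to pin down exactly which real-part crossings occur in each of the eight sweeps---is the same computation the paper performs but delegates entirely to \cref{fig:quartic-braid}; you make explicit the inequalities the figure leaves implicit, which is a genuine gain in rigour (only note that the comparisons happen between the shifted points $\hat p_\x,\hat p_\y$, so what one actually needs is $n>\Re(p_\y-p_\x)+\tfrac23$, which follows from $\Re(p_\y-p_\x)\le n-1$ by integrality; your crossing count of two per horizontal phase and zero per vertical phase is right). The second half is where you genuinely diverge: the paper proves nontriviality by closing the braid and recognising the Borromean rings, thereby importing the nontriviality of that link from knot theory, whereas you stay inside $\PB_3$ and compute algebraically. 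I checked your manipulation: $\sigma_1\sigma_2^2\overline{\sigma}_1=\overline{\sigma}_2\sigma_1^2\sigma_2$ by two applications of the braid relation, so the element equals $\overline{\sigma}_2\sigma_1^2\sigma_2^2\overline{\sigma}_1^2\overline{\sigma}_2=A_{23}^{-1}A_{13}A_{23}A_{13}^{-1}$, a nontrivial commutator in the free group $\langle A_{13},A_{23}\rangle$ (the Fadell--Neuwirth kernel of forgetting the third strand). This is correct and has the advantage of being self-contained modulo the standard structure of $\PB_3\cong F_2\times\ZZ$, avoiding any appeal to link invariants; the paper's route is shorter to state and ties the example to a familiar picture. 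Either argument is acceptable.
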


\begin{proof}
	When computing $I_{p_\x,p_\y}(w_n)$ as described in \cref{sssec:braid-invariant}, one obtains the braid represented in \cref{fig:quartic-braid}, which coincides with the expression above.
	It is easy to check that this is a nontrivial element in $\rm B_3$: e.g., by closing the braid \cite[Chapter 1]{lickorish1997introduction}, one obtains a nontrivial link called \emph{Borromean rings} \cite{lindstrom1991borromean}, \cite{nanyes1993borromean}.
\end{proof}

\begin{figure}
\scalebox{.8}{
	\begin{subfigure}{0.5\textwidth}
		\begin{tikzpicture}[scale = 1]
			\node[anchor=south west, inner sep=0] (image) at (0,0) {\includegraphics[height=7cm]{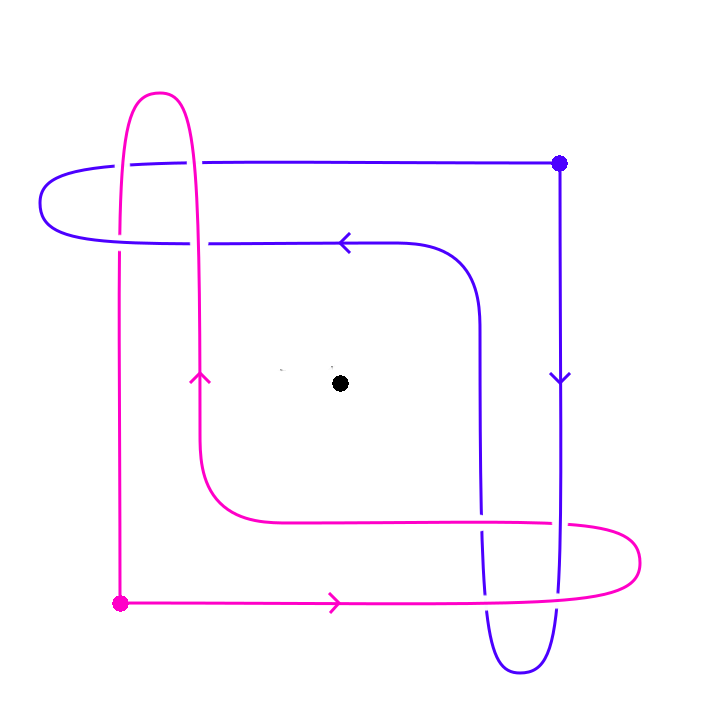}};
			\node[below left] at (1.2,1.2) {$\hat p_x$};
			\node[above right] at (5.5,5.4) {$\hat p_y$};
			\node[above left] at (3.3,3.3) {$O$};
		\end{tikzpicture}
	\end{subfigure}
	\begin{subfigure}{0.4\textwidth}
		\begin{tikzpicture}[scale=1]
			\node[anchor=south west, inner sep=0] (image) at (0,0) {\includegraphics[height=7cm]{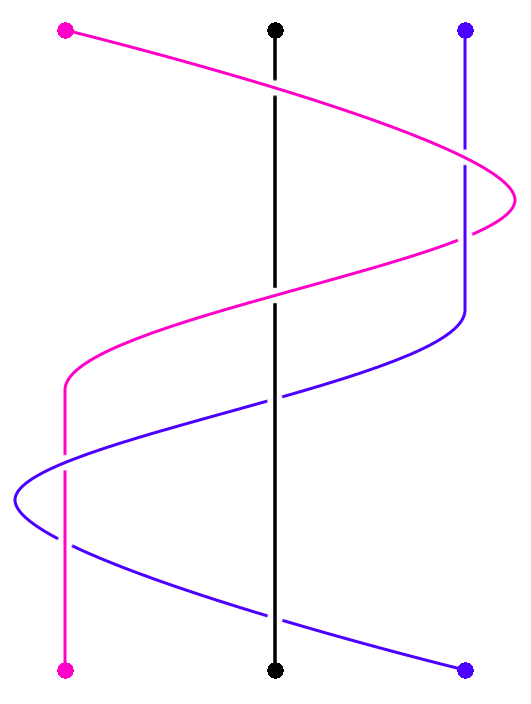}};
			\node[above] at (0.6,6.65) {$\hat p_x \times 1$};
			\node[above] at (2.7,6.7) {$O \times 1$};
			\node[above] at (4.5,6.65) {$\hat p_y \times 1$};
			\node[below] at (0.6,0.3) {$\hat p_x \times 0$};
			\node[below] at (2.7,0.3) {$O \times 0$};
			\node[below] at (4.5,0.3) {$\hat p_y \times 0$};
		\end{tikzpicture}
	\end{subfigure}
    }
	\caption{The braid $ I_{p_x, p_y}(w_n) $ inside $ \CC \times [0,1] $, seen from two different points of view: on the left, its projection to $ \CC $, where arrows denote the direction where the strands go down; on the right, the projection to $ \RR \times [0,1] $.}
	\label{fig:quartic-braid}
\end{figure}

Now we are ready to prove that the braid invariant constitutes a lower bound for the Dehn function of $\K$.
\begin{proposition}\label{prop:dehn=braid}
	Let $w \in \trivialwords$ be a word representing the trivial element of $ \smallgroup $. Then $ \Area(w) \geq C \cdot \I(w) $ for some constant $C>0$ depending only on the presentation.
\end{proposition}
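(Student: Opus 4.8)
The plan is to run the classical van Kampen-diagram argument that upgrades any quantity which is subadditive, invariant under conjugation, and uniformly bounded on the defining relators into a lower bound for the area. Concretely, starting from an optimal filling, I would write $\Area(w) = M$ and fix an expression
\[
	w = \prod_{i=1}^{M} \inv u_i R_i u_i
\]
in $\disonwords$, where each $R_i$ is one of the finitely many defining relators $[x_1,y_1]$, $[x_2,y_2]$, $[x_1^{\epsilon_1},y_2^{\epsilon_2}][x_2^{\epsilon_2},y_1^{\epsilon_1}]$ of $\K$, and each $u_i \in \disonwords$. The goal is then to show that $\I(w) \leq C'' \cdot M$ for a constant $C''$ depending only on the presentation, which is exactly the desired inequality with $C = 1/C''$.

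The key steps are as follows. First, note that $\trivialwords$ is the kernel of $\free{\kgens} \twoheadrightarrow \K$, hence a normal subgroup; since each relator $R_i$ maps to $1 \in \K$, every conjugate $\inv u_i R_i u_i$ and every partial product $\prod_{i=1}^{k} \inv u_i R_i u_i$ again lies in $\trivialwords$, where $\I$ is defined. Second, apply \cref{lemma:summability-braid-inv}(3) inductively along these partial products, followed by \cref{lemma:summability-braid-inv}(2) to strip off the conjugators, to obtain
\[
	\I(w) \;\leq\; \sum_{i=1}^{M} \I\bigl(\inv u_i R_i u_i\bigr) \;=\; \sum_{i=1}^{M} \I(R_i).
\]
Third, set $C'' = \max_R \I(R)$, the maximum over the finite set of defining relators; this is finite because, by \cref{lemma:far-away-equal-untangled}, a relator of length $\ell$ has $I_{p_\x,p_\y}$ nontrivial only for the finitely many pairs $(p_\x,p_\y)$ with $\norm{p_\x},\norm{p_\y}\leq \ell$ (this is precisely \cref{lemma:summability-braid-inv}(1)). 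Combining the two displays gives $\I(w) \leq C'' \cdot \Area(w)$, so the statement holds with $C = 1/C''$ when $C''>0$, and trivially for any $C$ when $C''=0$.

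I do not expect a genuine obstacle at this stage: all the substantive work — that the loops $(\gamma_\x,\gamma_\y)$ are genuine loops in $\Conf_2(\CZ)$, that each $I_{p_\x,p_\y}$ is a homomorphism, and that $\I$ is finite, conjugation-invariant and subadditive — is already in place, and the remaining input needed to deduce $\Dehn{\K} \asympgeq N^4$, namely that $\I(w_n)$ grows quartically while $\abs{w_n}$ grows linearly, is the content of the following subsection and constitutes the genuinely hard part of the lower bound. The only point requiring mild care here is that the conjugators $u_i$ appearing in a filling are arbitrary words, not null-homotopic ones; but this is exactly what conjugation-invariance of $\I$ — valid for conjugation by any $\alpha \in \disonwords$, by \cref{lemma:summability-braid-inv}(2) — is designed to absorb.
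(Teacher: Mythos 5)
Your proposal is correct and is essentially identical to the paper's proof: both decompose $w$ as $\prod_i \inv u_i R_i u_i$ with $M=\Area(w)$ factors, apply \cref{lemma:summability-braid-inv}(2) and (3) to get $\I(w)\leq \max_R \I(R)\cdot \Area(w)$, and invert the finite constant. The only cosmetic difference is that the paper disposes of the degenerate case by noting $\I(w_1)>0$ (via \cref{lemma:points-of-nontriviality-for-wn} at $p_\x=p_\y=0$) so the maximum is strictly positive, whereas you observe the statement is vacuous when the maximum is zero; both are fine.
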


\begin{proof}
	Set
	\[C' = \max\left\{\I(r) \ \left| \ r\in \trivialwords \textrm{ relation of } K^3_2(2)\right.\right\}.\]
	By \cref{lemma:summability-braid-inv}, the constant $C'$ is finite and only depends on the lengths of the relations.
	Using that $w_1$ is a relation of $\K$, and considering $p_\x=p_\y=0$ in \cref{lemma:points-of-nontriviality-for-wn}, one obtains that $I_{p_\x,p_\y}(w_1)$ is nontrivial and so both $I(w_1)$ and $C'$ are positive.

	If $w\in \trivialwords$ is any word representing the trivial word in $\K$, then, by applying the second and third item of \cref{lemma:summability-braid-inv}, we get
	\begin{align*} \I(w) & =
               \I\left(\prod_{i=1}^{\Area(w)} \inv{\alpha_i}r_i\alpha_i \right) \le
               \sum_{i=1}^{\Area(w)} \I(\inv{\alpha_i}r_i\alpha_i)=
               \sum_{i=1}^{\Area(w)} \I(r_i) \\
                     & \leq C'\cdot\Area(w),
	\end{align*}
	and one concludes by letting $ C = 1 / C' $.
\end{proof}

We are finally ready to prove that the Dehn function of $\K$ is at least quartic.

\begin{proposition}\label{prop:lower-bound-K322}
	The Dehn function of $\K$ is at least $N^4$.
\end{proposition}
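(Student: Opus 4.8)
The strategy is to exhibit a single explicit family of null-homotopic words whose area grows quartically in their length, namely the words $w_n=[\x_1^n,\y_2^n][\x_2^n,\y_1^n]$ introduced above. First I would record that $w_n$ really is a null-homotopic word of length $8n$: applying the criterion of \cref{rmk:alg-for-trivial-words}, deleting all $\x_1,\x_2$ (or all $\y_1,\y_2$) turns $w_n$ into $\y_2^n\inv\y_2^n\y_1^n\inv\y_1^n=1$ (resp.\ $\x_1^n\inv\x_1^n\x_2^n\inv\x_2^n=1$), while identifying each $\y_i$ with $\x_i$ turns $w_n$ into $[\x_1^n,\x_2^n][\x_2^n,\x_1^n]=1$; hence $w_n\in\trivialwords$. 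By \cref{prop:dehn=braid} it therefore suffices to prove $\I(w_n)\asympgeq n^4$.

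To bound $\I(w_n)$ from below I would simply count the Gaussian integer pairs $(p_\x,p_\y)$ that satisfy the hypotheses of \cref{lemma:points-of-nontriviality-for-wn}, each of which contributes $1$ to $\I(w_n)$. Writing $p_\x=-a-bi$ and $p_\y=c+di$ with $a,b,c,d\ge 0$ integers, the inequalities $\Re(p_\x),\Im(p_\x)\le 0$ and $\Re(p_\y),\Im(p_\y)\ge 0$ hold automatically, while $0\le\Re(p_\y-p_\x),\Im(p_\y-p_\x)<n$ becomes $a+c\le n-1$ and $b+d\le n-1$. The number of pairs $(a,c)$ of non-negative integers with $a+c\le n-1$ is $\sum_{s=0}^{n-1}(s+1)=\tfrac{n(n+1)}{2}$, and the same holds for $(b,d)$; since the two constraints are independent, we obtain
\[
	\I(w_n)\ \ge\ \left(\frac{n(n+1)}{2}\right)^2\ \ge\ \frac{n^4}{4}.
\]

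Combining this with \cref{prop:dehn=braid} gives $\Area(w_n)\ge C\,\I(w_n)\ge \tfrac{C}{4}n^4$ for a constant $C>0$ depending only on the presentation, while $w_n$ has length $8n$. Consequently $\Dehn{\K}[8n]\ge\tfrac{C}{4}n^4$, and since Dehn functions are non-decreasing this yields $\Dehn{\K}\asympgeq N^4$, as claimed. Together with the upper bound of \cref{prop:upper-bound-K322} this shows $\Dehn{\K}\asymp N^4$, which is exactly \cref{main:disons-group}.

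I do not anticipate a real obstacle in this final argument: the genuine work lies in the construction of the braid-invariant and in the braid computation of \cref{lemma:points-of-nontriviality-for-wn}, which are already in place. The only subtleties are (i) the elementary lattice-point count above, which is why it is crucial that \cref{lemma:points-of-nontriviality-for-wn} produces a two-dimensional \emph{region} of valid base points rather than finitely many; and (ii) noting that we only need a lower bound on $\I(w_n)$, so pairs $(p_\x,p_\y)$ outside that region --- some of which give the trivial braid, e.g.\ by \cref{lemma:far-away-equal-untangled} when $\norm{p_\x}$ or $\norm{p_\y}$ exceeds the length of $w_n$ --- may be safely ignored.
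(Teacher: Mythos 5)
Your proposal is correct and follows essentially the same route as the paper: apply \cref{lemma:points-of-nontriviality-for-wn} to the words $w_n$, count the lattice of admissible base points to get $\I(w_n)\succcurlyeq n^4$, and conclude via \cref{prop:dehn=braid}. The only (harmless) difference is that you count the full region permitted by the lemma, obtaining $\bigl(\tfrac{n(n+1)}{2}\bigr)^2$ pairs, whereas the paper restricts to the sub-box $-p_\x,p_\y\in[0,n/2)\times[0,n/2)$ and counts $\bigl(\tfrac{n-1}{2}\bigr)^4$; both yield the same quartic growth.
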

\begin{proof}
	By \cref{lemma:points-of-nontriviality-for-wn}, whenever $p_\x,p_\y$ are two Gaussian integers such that
	\[-p_\x,p_\y\in [0,n/2)\times [0,n/2)\subset \CC,\]
	the element $I_{p_\x,p_\y}(w_n)$ is nontrivial in $\PB_3$.
	As there are at least $\left(\frac{n-1}{2}\right)^4$ pairs of Gaussian integers $(p_\x,p_\y)$ satisfying this condition, the invariant $\I(w_n)$ grows at least as a polynomial of degree $4$ in $n$. The assertion now follows from \cref{prop:dehn=braid}.
\end{proof}

\begin{proof}[Proof of \cref{main:disons-group}]
	The assertion is an immediate consequence of \cref{prop:upper-bound-K322,prop:lower-bound-K322}.
\end{proof}

Let us end by mentioning that as a consequence of \cref{prop:lower-bound-K322} and \cite[Lemma 4.3]{UniformBounds-25}, 
we also obtain lower bounds on the Dehn functions of $K_r^3(r)$ for $r\geq 2$. 
\begin{corollary}
    For $r\geq 2$ the Dehn function of $K_r^3(r)$ is bounded below by $N^4$.
\end{corollary}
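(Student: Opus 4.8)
The plan is to realise the Bridson--Dison group $\K=K_2^3(2)$ as a retract of $K_r^3(r)$ for every $r\ge 2$; since Dehn functions cannot grow under passage to a retract, the lower bound $\Dehn{\K}\succcurlyeq N^4$ of \cref{prop:lower-bound-K322} then transfers to $K_r^3(r)$. This retract picture is essentially the mechanism behind \cite[Lemma 4.3]{UniformBounds-25}, so the corollary follows formally once the retraction is exhibited.

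First I would construct the retraction on the ambient products of free groups. On each of the three free factors, let $\rho\colon F_r\to F_2$ be the homomorphism fixing the first two generators and sending the remaining $r-2$ to $1$, and let $\iota\colon F_2\hookrightarrow F_r$ be the inclusion, so that $\rho\circ\iota=\operatorname{id}$. Write $R\colon\dirprod[r][3]\to\dirprod[2][3]$ and $J\colon\dirprod[2][3]\to\dirprod[r][3]$ for the homomorphisms obtained by taking the product of three copies of $\rho$, respectively of $\iota$; thus $R\circ J=\operatorname{id}$. Since $\rho$ preserves the exponent sum of each of the first two generators, $R$ intertwines the defining homomorphism $\dirprod[r][3]\to\ZZ^r$ with the defining homomorphism $\dirprod[2][3]\to\ZZ^2$, compatibly with the coordinate projection $\ZZ^r\to\ZZ^2$ onto the first two factors. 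Hence $R$ carries $K_r^3(r)$ into $\K$ and $J$ carries $\K$ into $K_r^3(r)$, so their restrictions give a retraction $K_r^3(r)\twoheadrightarrow\K$ whose section is the inclusion.

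Finally I would record that both groups are finitely presented --- $\K$ as explained at the beginning of \cref{sec:disons-group}, and $K_r^3(r)$ because it is a finitely generated full subdirect product of three free groups whose projection to every pair of factors is surjective, hence finitely presented by the criterion of \cite{BHMS-13} (see also \cite{UniformBounds-25}) --- and then invoke the standard fact that a finitely presented retract $H$ of a finitely presented group $G$ satisfies $\delta_H\preccurlyeq\delta_G$ (fill a loop of $H$ inside $G$ and apply the retraction to the filling). Combined with \cref{prop:lower-bound-K322}, this yields $\Dehn{K_r^3(r)}\succcurlyeq N^4$. The only point requiring an explicit check is that $R\circ J$ restricts to the identity on $\K$ rather than merely on the ambient product; but $R$ fixes $x_i=a_i\inv{c}_i$ and $y_i=b_i\inv{c}_i$ for $i\in\{1,2\}$ and hence fixes $\K=\langle x_1,x_2,y_1,y_2\rangle$ pointwise, so this is immediate. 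I therefore do not expect a genuine obstacle here: the substantive input is the lower bound \cref{prop:lower-bound-K322}, which is already established.
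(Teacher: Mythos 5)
Your argument is correct and is essentially the paper's: the paper simply cites \cite[Lemma 4.3]{UniformBounds-25}, which is exactly the retraction mechanism you reconstruct (the coordinate retraction $F_r\to F_2$ on each factor intertwining the maps to $\ZZ^r$ and $\ZZ^2$, so that $K_2^3(2)$ is a finitely presented retract of $K_r^3(r)$ and Dehn functions are monotone under passage to retracts), combined with \cref{prop:lower-bound-K322}. No gaps; the only difference is that you prove the cited lemma rather than quoting it.
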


\def\n{n}
\def\r{r}
\label{sec:braid}

\bibliographystyle{alpha}
\bibliography{references}

\newcommand{\etalchar}[1]{$^{#1}$}
\begin{thebibliography}{{Dis}08b}

\bibitem[ABI{\etalchar{+}}25]{UniformBounds-25}
Dario Ascari, Federica Bertolotti, Giovanni Italiano, Claudio Llosa~Isenrich,
  and Matteo Migliorini.
\newblock {D}ehn functions of subgroups of products of free groups part {I}:
  uniform upper bounds.
\newblock {\em preprint}, 2025.

\bibitem[Art50]{Artin1950braid}
Emil Artin.
\newblock The theory of braids.
\newblock {\em American Scientist}, 38(1):112--119, 1950.

\bibitem[BBMS97]{baumslag}
Gilbert Baumslag, Martin~R. Bridson, Charles~F. Miller, III, and Hamish Short.
\newblock {Finitely Presented Subgroups of Automatic Groups and their
  Isoperimetric Functions}.
\newblock {\em Journal of the London Mathematical Society}, 56(2):292--304, 10
  1997.

\bibitem[BHMS09]{BHMS-09}
Martin~R. Bridson, James Howie, Charles~F. Miller, III, and Hamish Short.
\newblock Subgroups of direct products of limit groups.
\newblock {\em Ann. of Math. (2)}, 170(3):1447--1467, 2009.

\bibitem[BHMS13]{BHMS-13}
Martin~R. Bridson, James Howie, Charles~F. Miller, III, and Hamish Short.
\newblock On the finite presentation of subdirect products and the nature of
  residually free groups.
\newblock {\em Amer. J. Math.}, 135(4):891--933, 2013.

\bibitem[Bie81]{bieri}
Robert Bieri.
\newblock {\em Homological Dimension of Discrete Groups}.
\newblock Mathematics Department, Queen Mary College, University of London,
  1981.

\bibitem[Bri]{BridsonPersonal}
Martin~R. Bridson.
\newblock Personal communication.

\bibitem[CF17]{carter2017stallings}
William Carter and Max Forester.
\newblock The {D}ehn functions of {S}tallings-{B}ieri groups.
\newblock {\em Mathematische Annalen}, 368, 06 2017.

\bibitem[Dis08a]{Dison-08-II}
Will Dison.
\newblock An isoperimetric function for {B}estvina-{B}rady groups.
\newblock {\em Bull. Lond. Math. Soc.}, 40(3):384--394, 2008.

\bibitem[{Dis}08b]{Dison-08}
Will {Dison}.
\newblock {\em {Isoperimetric functions for subdirect products and
  Bestvina-Brady groups}}.
\newblock PhD thesis, Imperial College London, October 2008.

\bibitem[Dis09]{Dison-09}
Will Dison.
\newblock A subgroup of a direct product of free groups whose {D}ehn function
  has a cubic lower bound.
\newblock {\em J. Group Theory}, 12(5):783--793, 2009.

\bibitem[Ger95]{gersten-95}
S.~M. Gersten.
\newblock Finiteness properties of asynchronously automatic groups.
\newblock In {\em Geometric group theory ({C}olumbus, {OH}, 1992)}, volume~3 of
  {\em Ohio State Univ. Math. Res. Inst. Publ.}, pages 121--133. de Gruyter,
  Berlin, 1995.

\bibitem[GS02]{GerSho-02}
Stephen~M. Gersten and Hamish Short.
\newblock Some isoperimetric inequalities for kernels of free extensions.
\newblock {\em Geom. Dedicata}, 92:63--72, 2002.
\newblock Dedicated to John Stallings on the occasion of his 65th birthday.

\bibitem[KLI22]{KrophollerLlosa}
Robert Kropholler and Claudio Llosa~Isenrich.
\newblock Dehn functions of coabelian subgroups of direct products of groups.
\newblock {\em Journal of the London Mathematical Society}, 107(1):123--152,
  2022.

\bibitem[Lic97]{lickorish1997introduction}
Raymond W.~B. Lickorish.
\newblock {\em An Introduction to Knot Theory}.
\newblock Graduate Texts in Mathematics. Springer New York, 1997.

\bibitem[LIT20]{LlosaTessera}
Claudio Llosa~Isenrich and Romain Tessera.
\newblock Residually free groups do not admit a uniform polynomial
  isoperimetric function.
\newblock {\em Proc. Amer. Math. Soc.}, 148(10):4203--4212, 2020.

\bibitem[LZ91]{lindstrom1991borromean}
Bernt Lindström and Hans-Olov Zetterström.
\newblock Borromean circles are impossible.
\newblock {\em The American Mathematical Monthly}, 98(4):340--341, 1991.

\bibitem[Mih68]{Mih-68}
K.~A. Mihailova.
\newblock The occurrence problem for free products of groups.
\newblock {\em Mathematics of the USSR-Sbornik}, 4(2):181, 1968.

\bibitem[Nan93]{nanyes1993borromean}
Ollie Nanyes.
\newblock An elementary proof that the borromean rings are non-splittable.
\newblock {\em The American Mathematical Monthly}, 100(8):786--789, 1993.

\bibitem[Pap96]{papasoglu1996asymptotic}
Panos Papasoglu.
\newblock {On the asymptotic cone of groups satisfying a quadratic
  isoperimetric inequality}.
\newblock {\em Journal of Differential Geometry}, 44(4):789 -- 806, 1996.

\bibitem[Sta63]{stallings}
John Stallings.
\newblock A finitely presented group whose 3-dimensional integral homology is
  not finitely generated.
\newblock {\em American Journal of Mathematics}, 85(4):541--543, 1963.

\end{thebibliography}

\end{document}